\theoremstyle{plain}
\newtheorem{corollary}{Corollary}[section]
\newtheorem{thm}{Theorem}
\newtheorem{lemma}{Lemma}[section]
\newtheorem{question}{Question}
\theoremstyle{definition}
\newtheorem{exmp}{Exemple}[section]
\newtheorem{prop}{Proposition}
\newtheorem{definition}{Definition}[section]
\newtheorem{remark}{Remark}[section]
\newcommand{\R}{\mathbb{R}}
\newcommand{\n}{\mathbb{N}}
\newcommand{\sg}{\Sigma}
\newcommand{\gm}{\Gamma}
\title{\sc{Curves on surfaces and surgeries}}
\author{Abdoul Karim SANE}
\date{ \small{UMPA-ENS Lyon, January 30th 2019.}}
\begin{document}
\renewcommand{\proofname}{Proof}
\renewcommand{\abstractname}{Abstract}
\renewcommand{\refname}{Bibliography}
\maketitle
\begin{abstract}
We study collections of curves in generic position on a closed oriented surface whose complement are disks. We define a surgery operation on the set of such collections and we prove that any two of them can be connected by a sequence of surgeries. 
\end{abstract}

\begin{section}{Introduction}
Curves on surfaces is a rich domain in low dimensional topology and natural questions emerge from it. One of them is the counting problem under topological constraints. For instance it is well known that on a genus ~$g$ surface, there are ~$\lfloor \frac{g}{2}\rfloor + 1$ homeomorphism classes of simple closed curves.

 A collection of curves $\gm$ on a genus $g$ oriented closed surface $\sg_g$ is \textit{filling} if its complement $\sg_g-\gm$ is a union of topological disks. Considering $\gm$ as a graph embedded in $\sg_g$ whose vertices are the intersection points of $\gm$ and whose edges are the arcs connecting intersection points, a filling collection is the same object as a \textit{map} ---an embedded graph whose complement consists of disks. If $\gm$ is in generic position, all vertices have degree $4$.
 
 If $\sg_g-\gm$ consists of exactly one disk, we say that the collection is \textit{one-faced}. In terms of maps one usually speaks of a \textit{unicellular map}.
 
The counting problem for (unicellular) maps has been first considered by W. Tutte in the planar case \cite{Tut1,Tut2,Tut3, Tut4}, followed by Lehman-Walsh ~\cite{Walsh} and Harer-Zagier ~\cite{Zag} in higher genus. They give a counting formula for the number of unicellular maps in a genus $g$ surface and that formula has been reproved by direct bijective methods by G. Chapuy ~\cite{Chap}.  An exact formula for \textit{rooted} unicellular maps (that is maps with one marked oriented edge) was provided by A. Goupil and G. Schaeffer ~\cite{Goup}. For the number of quadrivalent maps on a genus $g$ surface with one marked oriented edge, their formula greatly simplifies into the number ~$\frac{(4g-2)!}{2^{2g-1}g!}$. Since marking one oriented edge multiplies the number of such maps by at most the number of oriented edges, $8g-4$ in this case, the number of one-faced collections up to homeomorphism grows exponentially with the genus. 

In a more topological context, T. Aougag and S. Shinnyih studied \textit{minimally intersecting pairs}: those one-faced collections made of exactly two simple closed curves. They show that their number also grows exponentially with the genus. Recently we studied one-faced collections in order to find symmetric polytopes in ~$\R^4$ which are not (dual) unit ball of intersection norms ~\cite{Element}.  

In this article, we endow the set of one-faced collections on a genus $g$ surface $\sg_g$ (up to homeomorphism of $\sg_g$) with an additional graph structure.

Given a one-faced collection $\gm$ and a simple arc $\lambda$ connecting two edges ~$x$ and ~$y$, we obtain a new collection $\gm'$ by "cutting-open" $\gm$ along $\lambda$ (see Figure ~\ref{surgery}). When the final collection is one-faced, we speak of a \textit{surgery} and denote it by ~$\sigma_{x,y}(\gm)$. We then define the \textit{surgery graph $K_g$} as the graph whose vertices are homeomorphism classes of one-faced collections of curves on $\sg_g$ and whose edges are pairs of collections connected by surgery. 

For example, if $g=1$ there is only one one-faced collection, so $K_1$ consists of one isolated vertex. Using the Goupil-Schaeffer formula for $g=2$, one checks that there are exactly six one-faced collections on $\sg_2$. \footnote{The Goupil-Schaeffer Formula gives 45 marked collections, but every one-faced collection corresponds to 3, 6, or 12 marked collections depending on the number of symmetries of the collection.} By inductively trying all possible surgeries, one obtains those six one-faced collections.

\begin{figure}[htbp]
\begin{center}
\includegraphics[scale=0.1]{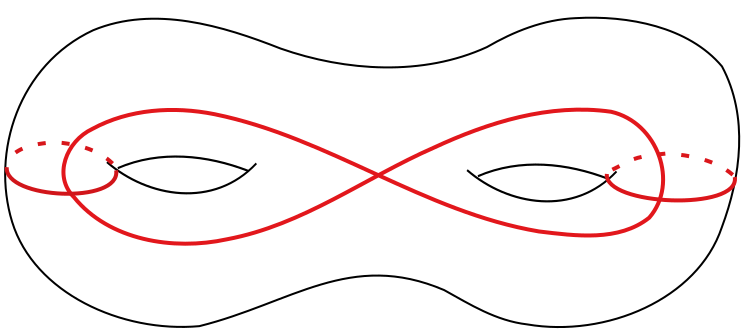}
\put(18,-30){\includegraphics[scale=0.1]{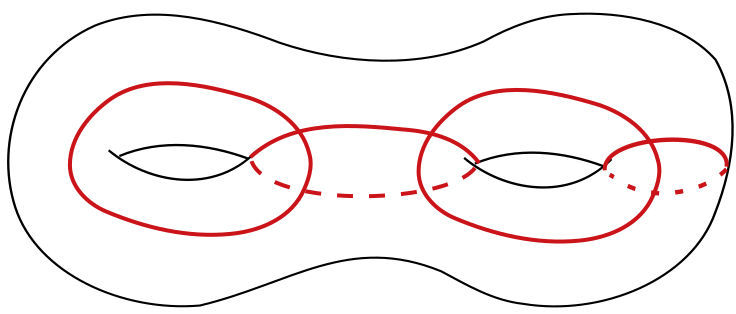}}
\put(18,30){\includegraphics[scale=0.1]{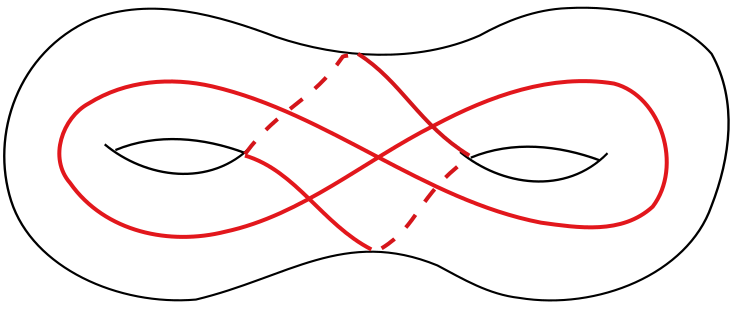}}
\put(-75,0){\includegraphics[scale=0.1]{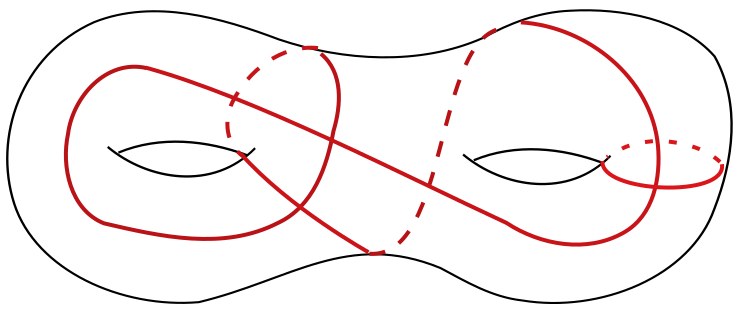}}
\put(-45,4){\huge{$\longleftrightarrow$}}
\put(7,18){\huge{$\nearrow$}}
\put(7,18){\huge{$\swarrow$}}
\put(7,-8){\huge{$\searrow$}}
\put(7,-8){\huge{$\nwarrow$}}
\put(-28,-30){\includegraphics[scale=0.1]{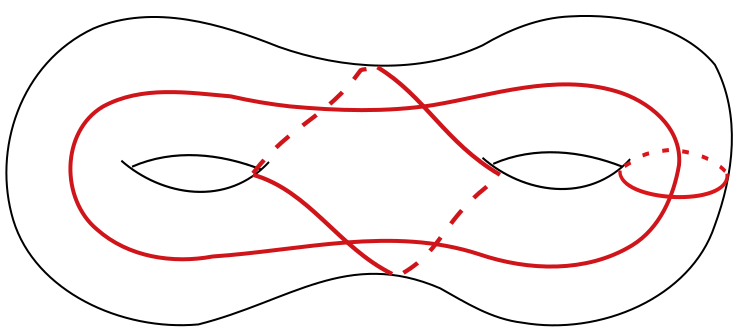}}
\put(-43,-15){\huge{$\nwarrow$}}
\put(-43,-15){\huge{$\searrow$}}
\put(-28,30){\includegraphics[scale=0.1]{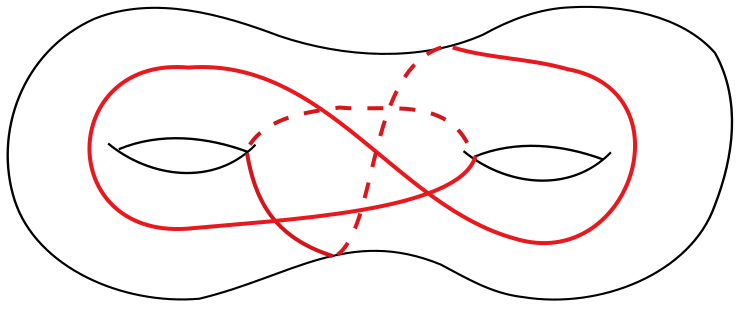}}
\put(-43,20){\huge{$\swarrow$}}
\put(-43,20){\huge{$\nearrow$}}
\put(-0,35){\huge{$\longleftrightarrow$}}
\put(-0,-25){\huge{$\longleftrightarrow$}}
\put(6,4){\huge{$-----$}}
\put(40,2){\includegraphics[scale=0.07]{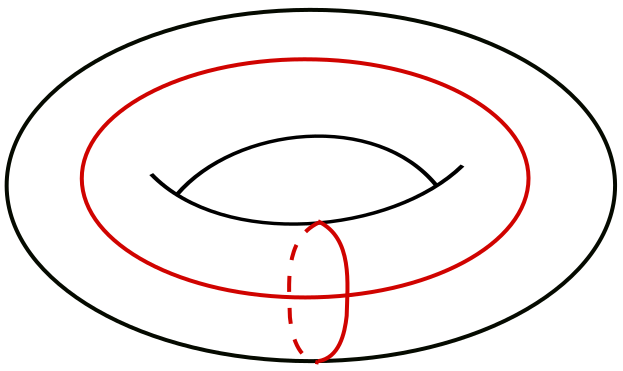}}
\caption{The graphs $K_2$ and $K_1$. The dashed edge indicates the vertex obtained by the connected sum of two copies of the unique collection in $K_1$.}
\label{Thegraph}
\end{center}
\end{figure}

One notices that $K_2$ is connected (see Figure ~\ref{Thegraph}). Our main result is:
\begin{thm}\label{thm1}
For every integer $g$ the graph $K_g$ is connected.
\end{thm}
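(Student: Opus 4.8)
The plan is to argue by induction on the genus $g$, using surgeries to split off a standard genus-one summand and then invoking the inductive hypothesis on the complementary piece. To set this up, I would first encode a one-faced collection $\gm$ by the combinatorics of its single complementary disk $D$: reading $\partial D$ gives a cyclic word $W(\gm)$ of length $8g-4$ in which each of the $4g-2$ edges of $\gm$ occurs exactly twice, and the $4$-valence of the crossings records, at each corner, the straight-through pairing of opposite edges. An Euler-characteristic count shows that every one-faced collection on $\sg_g$ has exactly $V=2g-1$ crossings and $E=4g-2$ edges: with one disk face one needs $V-E+1=2-2g$, and quadrivalence gives $E=2V$.

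Next I would translate the surgery operation into this language. The arc $\lambda$ meets $\partial D$ at two feet lying on (occurrences of) the edges $x$ and $y$, and cutting open along $\lambda$ is a band move that reconnects the four edge-ends at the feet of $\lambda$ without creating or destroying crossings; hence, whenever $\sigma_{x,y}(\gm)$ is again filling, it still has $V=2g-1$ and $E=4g-2$, and by the same Euler count its complement is a single disk. The content of the surgery condition is therefore purely combinatorial: it holds precisely when the reconnection does not break $\partial D$ into two separate boundary circles, which I would show amounts to an interlacement (linking) condition on the two occurrences of $x$ and $y$ read along $W(\gm)$. Since the inverse of a band move is again a band move between one-faced collections, every valid surgery is reversible and $K_g$ is genuinely an (undirected) graph.

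With this dictionary in hand the induction reads as follows. The base case $g=1$ is the single vertex $K_1$. For $g\ge 2$ the key lemma I would prove is that every one-faced collection on $\sg_g$ is connected, through valid surgeries, to a collection of the form $\gm_1 \# \gm_{g-1}$, where $\gm_1$ is the standard one-faced collection on $\sg_1$ localised in a disk and $\gm_{g-1}$ is a one-faced collection on $\sg_{g-1}$. Concretely, I would locate in $W(\gm)$ a pair of interlaced edge-occurrences cutting out a single handle and use a sequence of band moves --- morally the cut-and-paste reductions of the surface-classification algorithm --- to bring this handle into standard, separated position. Once $\gm$ is in the form $\gm_1 \# \gm_{g-1}$, surgeries supported away from the genus-one summand correspond bijectively to surgeries on $\gm_{g-1}$ viewed on $\sg_{g-1}$, with matching one-faced conditions; so by the inductive hypothesis $\gm_{g-1}$ is connected inside $K_{g-1}$ to the iterated connected sum $S_{g-1}=\gm_1^{\# (g-1)}$, whence $\gm_1 \# \gm_{g-1}$ is connected inside $K_g$ to $S_g=\gm_1^{\# g}$, and therefore so is $\gm$. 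This $S_g$ is exactly the connected-sum vertex marked by the dashed edge in Figure~\ref{Thegraph} when $g=2$.

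The main obstacle is the key lemma, and specifically the requirement that the classical handle-normalisation moves be realised by band moves that never leave the class of one-faced quadrivalent collections. The surface-classification reductions freely cut along arbitrary arcs and may momentarily create extra faces or disrespect the straight-through pairing at a crossing, so the real work is to reorganise them into a sequence each of whose steps is an admissible surgery, i.e. satisfies the interlacement criterion derived above. I expect to handle this by a monovariant argument on $W(\gm)$: I would attach to $\gm$ a complexity measuring the number of crossings not yet in handle-standard position, and show that as long as this complexity is positive there is an admissible band move strictly decreasing it, appealing to the connected-sum splitting only once a single standard handle has been isolated.
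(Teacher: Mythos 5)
Your overall strategy --- induct on $g$ by using surgeries to split off a standard torus summand and then appeal to the inductive hypothesis on the complementary piece --- is exactly the paper's strategy (its Proposition~\ref{prop} is your ``key lemma''). But two genuine gaps remain. First, the key lemma itself is left as a plan: you propose an unspecified monovariant counting ``crossings not yet in handle-standard position'' and assert that an admissible band move always decreases it. This is where essentially all of the paper's work lives, and the actual monovariant is quite different: it is the number $S_\gm$ of $1$-simple curves (bounded above by $g$), and making it increase requires classifying the $4$-valent vertices into Types $1$, $2$, $3$ via the cyclic order of the boundary word, invoking Chapuy's trisection lemma to count how many of each type occur, and then a sequence of lemmas (\ref{type2neigh}, \ref{tictac}, \ref{idealstep}, \ref{tictactoe}) showing that each remaining bad adjacency pattern ``hides'' a simplification reachable by one preparatory surgery. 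Nothing in your sketch indicates why a single admissible move always exists that strictly decreases your complexity, and the naive cut-and-paste reductions do, as you note, leave the class of one-faced quadrivalent collections; so the lemma is not established.

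Second, your inductive step has a gap the paper explicitly works around. After reducing $\gm$ to $\gm_1\#\gm_{g-1}$, you lift a path in $K_{g-1}$ from $\gm_{g-1}$ to a canonical representative and conclude. But the connected sum depends on the \emph{marked edge} at which $\gm_1$ is attached, and the lift of a surgery path does not control where that marked edge ends up: the lifted path terminates at $(S_{g-1},y)\#\gm_1$ for an edge $y$ determined by the path, not at your canonical $S_g$. The paper handles this by (i) Lemma~\ref{side}, showing the sum is independent of the orientation of the marked edge up to one surgery, (ii) choosing the $g$-necklace $N_g$ as the canonical target precisely because its edges fall into few symmetry classes, and (iii) an explicit case analysis (Figure~\ref{suite}) showing that whichever edge of $N_g$ carries the torus summand, a short surgery sequence reaches $N_{g+1}$. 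Without an argument of this kind your induction closes only up to the choice of attaching edge, which is not enough. (Indeed the author remarks that Theorem~\ref{thm1} does not follow directly from the splitting alone, which is why the paper proves Theorem~\ref{thm2} first.)
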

 
 Our proof is not straightforward as one may hope. We define a \textit{connected sum} operation on one-faced collections that turns two one-faced collections on $\sg_{g_1}$ and $\sg_{g_2}$ into a new one-faced collection on $\sg_{g_1+g_2}$. We then considers the \textit{surgery-sum graph} $\widehat{K}_g$ as the union $\sqcup_{i\leq g} K_i$ where one adds an edge between $\gm_1$ and $\gm_2$ if $\gm_2$ can be realized as a connected sum of $\gm_1$ with the unique one-faced collection on the torus. We then have: 
\begin{thm}\label{thm2}
For every $g$, the graph $\widehat{K}_g$ is connected.
\end{thm}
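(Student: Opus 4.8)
The plan is to argue by induction on the genus $g$. The base case $g=1$ is immediate: there is a single one-faced collection on the torus, so $\widehat{K}_1=K_1$ is one vertex and is trivially connected. For the inductive step, suppose $\widehat{K}_{g-1}$ is connected. Since $\widehat{K}_g$ is obtained from $\widehat{K}_{g-1}$ by adjoining the vertices of $K_g$, the surgery edges internal to $K_g$, and the connected-sum edges joining a collection $\gm''\in K_{g-1}$ to the collection $\gm''\,\#\,T$ (where $T$ denotes the unique one-faced collection on the torus), it suffices to connect every vertex of $K_g$ to the subgraph $\widehat{K}_{g-1}$. The only edges leaving level $g$ downward are these connected-sum edges, and such an edge is present exactly when a vertex of $K_g$ has the form $\gm''\,\#\,T$; so the whole theorem reduces to the following key claim: every one-faced collection $\gm$ on $\sg_g$ with $g\geq 2$ can be joined, by a path of surgeries inside $K_g$, to a collection of the form $\gm''\,\#\,T$ with $\gm''\in K_{g-1}$.

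To attack the key claim I would pass to the \emph{gluing polygon} of $\gm$: since $\gm$ is one-faced with $2g-1$ four-valent vertices and $4g-2$ edges, its single complementary disk is a $(8g-4)$-gon whose sides are identified in pairs, the identification being a fixed-point-free involution on the $8g-4$ sides. In this language a surgery is a cut-and-reglue move: the arc $\lambda$ is a chord of the polygon joining two sides, and cutting open along $\lambda$ and reassembling produces a new pairing; the move qualifies as a surgery precisely when the reassembled polygon is still a single disk, i.e. when one-facedness is preserved. Orientability of $\sg_g$ forces every pair to appear with opposite orientations, so the gluing is encoded by a word in which each letter occurs once as $a$ and once as $a^{-1}$; and positivity of the genus forces the existence of a \emph{linked} (interleaved) pair of sides, a subword of shape $a\ldots b\ldots a^{-1}\ldots b^{-1}$, since a gluing with only nested or parallel pairs would produce a sphere. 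Such a linked pair is exactly the combinatorial trace of a handle.

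The remaining work is to show that, starting from one linked pair, a sequence of surgeries brings the word into the form $W'\,a\,b\,a^{-1}\,b^{-1}$ in which the genus-one block $a\,b\,a^{-1}\,b^{-1}$ is split off from the rest. That block is the torus collection $T$, while $W'$ is the gluing word of a genus-$(g-1)$ one-faced collection $\gm''$, so that $\gm$ is surgery-equivalent to $\gm''\,\#\,T$. The moves required are the familiar ones from the classification of surfaces --- sliding a pair along the boundary and regrouping a linked pair into a commutator --- and I would verify, case by case, that each such move is induced by cutting open along a single admissible chord, or a short explicit sequence of chords, whose output remains one-faced.

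This verification is where I expect the main obstacle to lie. An arbitrary cut-and-paste of the gluing polygon need not preserve one-facedness, so the chords must be chosen so as never to disconnect the complementary disk; and one must confirm that the separated genus-one block is genuinely the connected-sum factor $T$ in the sense defined above, rather than some other genus-one insertion. I would handle this by carrying out the normalization through an explicit finite list of local configurations, checking one-facedness at each step via the boundary word, and matching the terminal $4$-gon with the definition of the connected sum with $T$. Once the key claim is secured the induction closes: $\gm$ is surgery-connected to $\gm''\,\#\,T$, the connected-sum edge carries $\gm''\,\#\,T$ to $\gm''\in K_{g-1}\subseteq\widehat{K}_{g-1}$, and $\widehat{K}_{g-1}$ is connected by hypothesis; hence every vertex of $K_g$ reaches $\widehat{K}_{g-1}$ and $\widehat{K}_g$ is connected.
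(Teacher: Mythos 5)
Your reduction is the same as the paper's: Theorem~\ref{thm2} follows by induction once one knows that every one-faced collection on $\sg_g$ ($g\geq 2$) is connected by surgeries inside $K_g$ to a connected sum $(\gm'',x)\#\gm_{\mathbb{T}}$ with $\gm''\in K_{g-1}$. That key claim is exactly Proposition~\ref{prop}, and it is where all the work lies. Your proposal does not prove it: you outline a strategy (normalize the gluing word using the cut-and-paste moves from the classification of surfaces, realized by surgeries) and explicitly defer the verification that each move preserves one-facedness and that the terminal configuration is a connected sum. That deferred verification is the entire content of Section~\ref{sect4} of the paper, which proceeds quite differently: it introduces \emph{simplifications} (surgeries between an edge $x$ and its consecutive edge $C(x)$, each of which creates a new $1$-simple curve), uses Chapuy's trisection lemma to show a non-simplifiable collection has exactly $g$ vertices of Type~2 and $g-1$ of Type~1, and then runs a monotone argument (the number of $1$-simple curves is bounded by $g$ and strictly increases under the moves of Lemmas~\ref{type2neigh}--\ref{idealstep}) to reach a ``toral'' collection, which Lemma~\ref{ideal} shows splits off a torus summand. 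Nothing in your sketch substitutes for this.

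There is also a concrete combinatorial error in your target normal form. You aim to bring the gluing word to $W'\,a\,b\,a^{-1}\,b^{-1}$ and claim that $W'$ is then the gluing word of a one-faced collection on $\sg_{g-1}$ and that the four-letter block is the torus summand. A letter count rules this out: the polygon of a one-faced collection on $\sg_g$ has $8g-4$ sides, so $W'$ would have $8g-8$ letters, whereas a one-faced collection on $\sg_{g-1}$ has $8g-12$. By Lemma~\ref{sum} the correct form of a connected sum $(\gm'',x)\#(\gm_{\mathbb{T}},a)$ is $x_1w_1\bar{x}_1x_2w_2\bar{x}_2\,a_1b\bar{a}_1a_2\bar{b}\bar{a}_2$: the marked edges are doubled by the new crossing point, so the torus block contributes six letters, not four, and the residual word is interleaved with the $x_i$ rather than sitting as a prefix. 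Separately, it is not clear that the classical classification-of-surfaces moves (sliding a pair, collecting a commutator) are each induced by a surgery: a surgery is the very specific operation exchanging $w_1$ and $w_3$ in $w_1xw_2\bar{x}w_3yw_4\bar{y}$, subject to the intertwining condition of Lemma~\ref{lemsurg}, and the paper's more structured route exists precisely because a naive word-normalization by these moves does not obviously stay within the set of admissible surgeries. As it stands, the proposal correctly frames the induction but leaves the essential step unproved and aims at a normal form that cannot be the right one.
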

It is obvious that Theorem~\ref{thm1} implies Theorem~\ref{thm2}. 
However we were not able to find a direct proof of Theorem~\ref{thm1}, that is why we first prove Theorem~\ref{thm2} and then use the main ingredient of the proof (Proposition ~\ref{prop}) to prove Theorem~\ref{thm1}.

Now, denoting by $D_g$ the diameter of $K_g$, we also prove the following:
\begin{thm}\label{diam}
For every $g$, we have $D_g\leq3g^2+9g-12$.
\end{thm}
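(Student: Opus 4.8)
The plan is to prove the bound by induction on $g$, piggy-backing on the connected-sum reduction that already underlies Theorems~\ref{thm1} and~\ref{thm2}. The essential new input, which I would extract from the proof of Proposition~\ref{prop}, is a \emph{linear} bound on how many surgeries are needed to put a collection into connected-sum form: I would show that every one-faced collection $\gm$ on $\sg_g$ can be joined in $K_g$, by a path of at most $3g+3$ surgeries, to a collection of the form $\gm'\#T$, where $T$ is the unique one-faced collection on the torus and $\gm'$ is a one-faced collection on $\sg_{g-1}$. Upgrading Proposition~\ref{prop} from a mere existence statement to this explicit count is the main obstacle: it amounts to revisiting each step of the construction and checking that the number of surgery arcs it introduces grows at most linearly — not quadratically — with the genus.

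The second ingredient is a lifting lemma. If $\gm_1'$ and $\gm_2'$ are one-faced collections on $\sg_{g-1}$ joined by a surgery path of length $\ell$ in $K_{g-1}$, then $\gm_1'\#T$ and $\gm_2'\#T$ are joined by a surgery path of length at most $\ell$ in $K_g$. The reason is that any arc $\lambda$ realizing a surgery on $\gm'$ can be pushed off the connected-sum region — a disk in the unique face — so it remains a valid surgery arc for $\gm'\#T$ and produces $\sigma(\gm')\#T$; since the torus summand sits in the complementary disk, one-facedness is preserved and each surgery lifts to a single surgery. Hence a length-$\ell$ path lifts with no increase in length.

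Granting these, I would argue as follows. Let $D_g$ be the diameter of $K_g$ and fix one-faced collections $\gm_1,\gm_2$ on $\sg_g$. By the quantitative reduction, join $\gm_i$ to $\gm_i'\#T$ in at most $3g+3$ surgeries for $i=1,2$. By the inductive hypothesis, $\gm_1'$ and $\gm_2'$ are joined in $K_{g-1}$ by a path of length at most $D_{g-1}$, which the lifting lemma converts into a path from $\gm_1'\#T$ to $\gm_2'\#T$ of length at most $D_{g-1}$ in $K_g$. Concatenating the three pieces gives
\[
 d_{K_g}(\gm_1,\gm_2)\ \le\ (3g+3)+D_{g-1}+(3g+3)\ =\ D_{g-1}+6(g+1),
\]
so that $D_g\le D_{g-1}+6(g+1)$.

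Finally I would solve this recursion with the base case $D_1=0$, since $K_1$ is a single vertex. Summing the increments,
\[
 D_g\ \le\ \sum_{i=2}^{g} 6(i+1)\ =\ 3g^2+9g-12,
\]
which is exactly the claimed bound; equivalently one verifies the induction step directly from $3(g-1)^2+9(g-1)-12+6(g+1)=3g^2+9g-12$. The only nonroutine part is the first ingredient — pinning down the linear surgery count hidden in Proposition~\ref{prop}; the lifting lemma and the arithmetic of the recursion are then straightforward bookkeeping.
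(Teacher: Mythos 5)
Your overall strategy --- a quantitative version of Proposition~\ref{prop}, a lifting lemma for surgeries under connected sum, and a telescoping recursion --- is the same skeleton as the paper's, and your arithmetic is correct. But there are two gaps. The first you flag yourself: the linear count $3g+3$ for the reduction to connected-sum form is asserted, not proved. The paper does supply such a count (in fact $3g-1$): it sorts the steps of the reduction into apparent simplifications ($m$ of them, one surgery each), hidden simplifications as in Lemmas~\ref{type2neigh}--\ref{idealstep} ($n$ of them, two surgeries each), and the surgeries of Lemma~\ref{tictactoe} not followed by a simplification ($k$ of them), then bounds $m+n\le g$ (each simplification creates a new $1$-simple curve, and there are at most $g$ of those) and $k\le g-1$, giving length $m+2n+k\le 3g-1$. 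Without this accounting your first ingredient is an empty box.

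The second gap is structural, and it is why the paper does not argue the way you do. A connected sum $\gm'\#T$ is not well defined without a marked edge (Lemma~\ref{sum}), and different marked edges generally give non-homeomorphic collections. Your lifting lemma, correctly stated, carries a path $\gm_1'\to\cdots\to\gm_2'$ to a path from $(\gm_1',x_1)\#T$ to $(\gm_2',z)\#T$, where $z$ is whatever the marked edge has become after the sequence of surgeries; the reduction applied to $\gm_2$ lands instead at $(\gm_2',x_2)\#T$ for an unrelated edge $x_2$. These two vertices of $K_g$ need not coincide, so your three pieces do not concatenate, and you have no bound on the number of surgeries needed to move the torus handle from $z$ to $x_2$ on an arbitrary $\gm_2'$. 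The paper sidesteps this by measuring distance to a fixed hub, the necklace $N_g$: it proves $d(\gm,N_g)\le 3g+3+d_{g-1}$, where the extra $4$ surgeries (Lemma~\ref{side} and Figure~\ref{suite}) resolve the marked-edge ambiguity using the symmetry of the necklace --- only three essentially different gluing positions occur on $N_{g-1}$ --- and then concludes with $D_g\le 2d_g$. To salvage your direct diameter recursion you would need a uniform bound on $d\bigl((\gm',z)\#T,(\gm',x)\#T\bigr)$ for arbitrary $\gm'$ and arbitrary edges $z,x$, which is not established in the paper.
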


Let $K_{\infty}:=\displaystyle{\sqcup_{i\geq 1} \hat{K}_i}$ be the \emph{infinite surgery-sum graph} obtained by taking the union of all surgery-sum graphs. We also have:

\begin{thm}\label{hyper} The infinite surgery-sum graph $K_{\infty}$ is not Gromov hyperbolic. 
\end{thm}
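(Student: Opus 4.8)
The plan is to prove that $K_\infty$ is not Gromov hyperbolic by exhibiting inside it a \emph{quasi-flat}: a quasi-isometrically embedded copy of $\z^2$, equivalently a family of geodesic triangles whose thinness constant tends to infinity. Since a Gromov hyperbolic geodesic space admits no quasi-isometrically embedded copy of $\z^2$ — for instance because the flat $\z^2$ already contains geodesic triangles that are arbitrarily far from being thin, while hyperbolic spaces have at least exponential divergence of geodesics — producing such a flat suffices. The flat will be spanned by two transverse directions of the graph: a \emph{horizontal} direction given by surgeries at a fixed genus, and a \emph{vertical} direction given by the connected-sum edges of $\widehat{K}$, each of which raises the genus by one.

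Concretely, I would first fix a large base genus $g_0$ and a surgery geodesic $\gm_0 = P_0, P_1, \dots, P_n$ in $K_{g_0}$, i.e.\ a path realizing $d_{K_{g_0}}(P_0,P_n)=n$. Such long geodesics exist because the number of one-faced collections on $\sg_{g_0}$ grows exponentially in $g_0$ (as recalled in the introduction) while the vertex-degree of $K_{g_0}$ is only polynomial in $g_0$: a surgery is determined by an arc joining two of the $4g_0-2$ edges, so each vertex has degree $O(g_0^2)$, forcing the diameter of $K_{g_0}$ to grow (roughly like $g_0/\log g_0$) and hence to be unbounded. Writing $\tau$ for the unique collection on $\sg_1$ and setting $Q_{i,j} := P_i \,\#\, \tau^{\# j}$, which lives on $\sg_{g_0+j}$, I place $Q_{i,j}$ at the node $(i,j)$ of a grid $\{0,\dots,n\}\times\{0,\dots,n\}$. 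The vertical edges $Q_{i,j}\!-\!Q_{i,j+1}$ are connected-sum edges of $\widehat{K}$ by definition. For the horizontal edges, Proposition~\ref{prop} should let one perform the surgery carrying $P_i$ to $P_{i+1}$ away from the connected-sum region, so that $Q_{i,j}\!-\!Q_{i+1,j}$ is again a surgery; this realizes the grid as a subgraph of $K_\infty$.

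It then remains to show the grid is \emph{quasi-isometrically} embedded, and here the vertical coordinate is controlled for free. The genus defines a $1$-Lipschitz map $\nu\colon K_\infty\to\n$ that is constant on surgery edges and increases by one on connected-sum edges, whence $d_{K_\infty}(Q_{i,j},Q_{i',j'})\ge |\nu(Q_{i,j})-\nu(Q_{i',j'})| = |j-j'|$. This already gives the exact lower bound in the vertical direction and shows the two genus-towers over $P_0$ and $P_n$ are genuine geodesic rays.

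The crux, and the step I expect to be the main obstacle, is the matching lower bound in the horizontal direction, namely $d_{K_\infty}(Q_{0,j},Q_{n,j})\ge c\,n$ \emph{uniformly in} $j$. A priori one might transform $P_0\#\tau^{\# j}$ into $P_n\#\tau^{\# j}$ cheaply by first climbing to a much higher genus, reshuffling the handles there, and descending again; the entire statement hinges on ruling out such shortcuts, i.e.\ on showing that excursions into higher genus cannot reduce the horizontal cost by more than a bounded factor. To control this I would look for a second coarsely $1$-Lipschitz invariant $\kappa\colon K_\infty\to\z$ that is stable, up to a bounded error, under connected sum with $\tau$ but grows linearly along the chosen base geodesic — for example a suitably normalized count related to the number of components of the collection, or a homological quantity insensitive to the added torus handles — so that every surgery and every $\#\tau$ edge changes $\kappa$ by $O(1)$. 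Such a $\kappa$ would give $d_{K_\infty}\ge \tfrac1C\,(|\Delta\nu|+|\Delta\kappa|)$, turning $(\nu,\kappa)$ into a coarse embedding of $K_\infty$ into $\z^2$ and pinning the grid to a genuine flat. Verifying the Lipschitz bound for $\kappa$ under surgery, and that $\kappa$ can be made to increase steadily along a geodesic of $K_{g_0}$, is precisely where the combinatorial input of Proposition~\ref{prop} (normal forms for connected sums) should enter; with it in hand, the three corners $Q_{0,0}$, $Q_{n,0}$ and $Q_{0,n}$ form, as $n\to\infty$, geodesic triangles of unbounded thinness, completing the proof.
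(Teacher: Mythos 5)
Your overall strategy --- pairing the $1$-Lipschitz genus function with a second, transverse, coarsely Lipschitz invariant so as to produce arbitrarily fat geodesic triangles --- is exactly the right idea, and it is in essence what the paper does. But as written your argument has a genuine gap at precisely the point you flag as the crux: the invariant $\kappa$ is never constructed, and without it the horizontal lower bound $d_{K_\infty}(Q_{0,j},Q_{n,j})\ge c\,n$ is unproved. Being a geodesic in $K_{g_0}$ says nothing about distances in $K_\infty$, since a path may climb in genus, reshuffle, and come back down; your counting argument for the diameter of $K_{g_0}$ (exponentially many vertices versus polynomial degree) therefore produces long geodesics only in the fixed-genus graph, not in $K_\infty$. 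Note also that every candidate you mention (number of components, number of special curves) is bounded by $O(g_0)$ on $K_{g_0}$, so no such $\kappa$ can grow linearly along an arbitrarily long geodesic at fixed genus: you would in any case have to let $g_0$ grow with $n$, which your ``fix a large base genus'' setup obscures.

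The paper closes this gap by taking $\kappa=S_\gm$, the number of $1$-simple curves: a surgery changes $S_\gm$ by at most $2$ and a connected sum with $\gm_{\mathbb{T}}$ by at most $1$, whence $d(\gm_1,\gm_2)\ge\frac{1}{2}|S_{\gm_1}-S_{\gm_2}|$ on all of $K_\infty$. Rather than building a full grid, it then exhibits two explicit families $X_{2g}$ and $Y_{2g}$ (tori glued onto necklaces in two different ways, Figure~\ref{oiseau}) that both sit at distance exactly $2g$ from the basepoint $\gm_{\mathbb{T}}$ --- this distance being computed for free from the genus --- while $|S_{X_{2g}}-S_{Y_{2g}}|=2g$ forces $d(X_{2g},Y_{2g})$ to grow linearly. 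The triangles with vertices $\gm_{\mathbb{T}}$, $X_{2g}$, $Y_{2g}$ then have two sides that diverge linearly, so no uniform $\delta$ can work. If you want to salvage your quasi-flat picture, the honest route is to adopt $S_\gm$ as your $\kappa$, verify the two Lipschitz estimates above (they follow from Lemma~\ref{lemsurg} and Lemma~\ref{sum}), and replace your abstract base geodesic by an explicit family along which $S_\gm$ is made to vary linearly --- at which point you have reconstructed the paper's proof.
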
  
\begin{paragraph}{Organization of the article:} In Section ~\ref{sect2}, we recall some facts on one-faced collections.  In Section ~\ref{sect3}, we define the surgery operation on a one-faced collection and the connected sum of two one-faced collections. In Section ~\ref{sect4}, we prove a couple of lemmas which are going to be useful for the proof of Theorem ~\ref{thm1}, Theorem \ref{thm2} and Theorem \ref{diam} in section ~\ref{sect5}.
\end{paragraph}
\begin{paragraph}{Acknowledgments:}
I am thankful to my supervisors P. Dehornoy and J.-C Sikorav  for their support during this work. I am also grateful to Gregory Miermont for his indication to the works of G. Chapuy.
\end{paragraph}
\end{section}

\begin{section}{Combinatorial description of one-faced collections}\label{sect2}

In the whole paper, $\sg_g$ denotes an oriented closed surface of genus $g$. Let ~$\gm$ be a filling collection on $\sg_g$. One can consider $\gm$ as a graph embedded in ~$\sg_g$. Let $V$ be the number of vertices of $\gm$, $E$ the number of edges and $F$ the number of faces. The Euler characteristic of $\sg_g$ is given by $\chi(\sg_g)= 2-2g= V-E+F$.
As $\gm$ defines a regular  4-valent graph we have $E=2V$, which implies $V=2g-2+F.$ If $\gm$ is one-faced, we then have  
\[V=2g-1, \quad E=4g-2, \quad F=1.\] 
From this one can see that there is only one one-faced collection on the torus. We call it $\gm_{\mathbb{T}}$ (see Figure ~\ref{tor}).

\begin{paragraph}{Gluing pattern for a one-faced collection:} Let $\gm$ be a one-faced collection on $\sg_g$, then $\sg_g-\gm$ is a polygon with $8g-4$ edges, that we denote by $P_{\gm}$. The polygon $P_{\gm}$ comes with a pairwise identification of its edges. Choosing an edge on $P_{\gm}$ as an origin, one can label the edges of $P_{\gm}$ from the origin in a clockwise manner; thus obtaining a word $W_{\gm}$ on $8g-4$ letters. If two edges are identified, we label them with the same letter with a bar on the letter of the second edge. The word $W_{\gm}$ is a \textit{gluing pattern} of $(\sg_g,\gm)$ and two gluing patterns of $(\sg_g,\gm)$ differ by a cyclic permutation and a relabeling. A letter of a gluing pattern associated to a one-faced collection corresponds to a side of an edge of $\gm$; thus we can see a letter as an \textit{oriented edge}.

\begin{exmp} The word $W=ab\bar{a}\bar{b}$ is a gluing pattern for the one-faced collection $\gm_{\mathbb{T}}$ on the torus.
\begin{figure}[htbp]
\begin{center}
\includegraphics[scale=0.2]{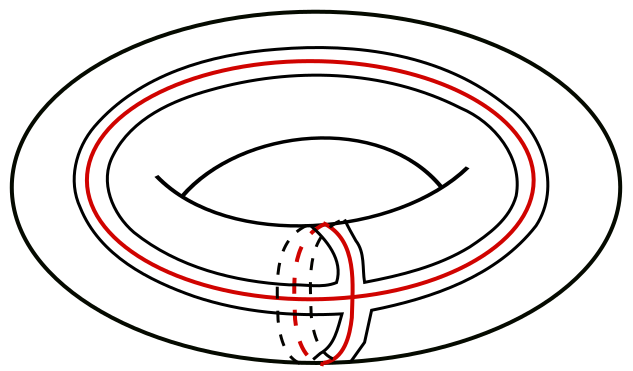}
\put(-22,23.4){\small{$a$}}
\put(-22,18){\small{$\bar{a}$}}
\put(-24,1){\small{$b$}}
\put(-18,1){\small{$\bar{b}$}}
\caption{The one-faced collection $\gm_{\mathbb{T}}$ on the torus.}
\label{tor}
\end{center}
\end{figure}
\end{exmp}     
We have the following:
\begin{prop}
\textit{Let $\gm_1$ and $\gm_2$ be two one-faced collections on $\sg_g$. Then ~$\gm_1$ and $\gm_2$ are topologically equivalent if and only if they have the same gluing patterns up to cyclic permutation an relabeling.} 
\end{prop}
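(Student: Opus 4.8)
The plan is to exploit the fact that a one-faced collection is entirely encoded by a cut-and-glue construction: cutting $\sg_g$ along $\gm$ produces the polygon $P_{\gm}$ equipped with a pairing of its $8g-4$ sides, and conversely $(\sg_g,\gm)$ is recovered, up to homeomorphism, as the quotient of $P_{\gm}$ by this pairing, with $\gm$ being the image of the (glued) boundary edges. I would first record this correspondence precisely, noting that once an origin edge and the orientation are fixed, the gluing pattern $W_{\gm}$ is exactly the combinatorial datum of the edge-pairing, and that a letter records both a side of an edge of $\gm$ and, implicitly, how the corners of $P_{\gm}$ are matched at the $2g-1$ vertices.

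For the ``only if'' direction, suppose $\phi:\sg_g\to\sg_g$ is a homeomorphism with $\phi(\gm_1)=\gm_2$. Then $\phi$ carries the unique complementary disk of $\gm_1$ homeomorphically onto that of $\gm_2$, hence induces a homeomorphism $P_{\gm_1}\to P_{\gm_2}$ sending edges to edges and respecting the identification of paired edges. Reading off boundary words, $\phi$ sends the clockwise cyclic sequence of oriented edges of $P_{\gm_1}$ to that of $P_{\gm_2}$; the freedom in the image of the origin is a cyclic permutation, and the induced bijection of letters is precisely a relabeling. Thus $W_{\gm_1}$ and $W_{\gm_2}$ agree up to cyclic permutation and relabeling (with a bar-switch in the orientation-reversing case).

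For the ``if'' direction, assume $W_{\gm_1}$ and $W_{\gm_2}$ coincide after a cyclic permutation and a relabeling, and choose origins realizing this. Since both polygons have $8g-4$ sides, there is a canonical homeomorphism $\Phi:P_{\gm_1}\to P_{\gm_2}$ sending the $i$-th edge to the $i$-th edge linearly and matching endpoints. Because the two words are equal, $\Phi$ carries each pair of identified edges of $P_{\gm_1}$ to a pair of identified edges of $P_{\gm_2}$ compatibly with the gluing maps, so it descends to the quotients.

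The step that needs real care, and the main obstacle, is exactly this descent. I would invoke the universal property of the quotient topology to obtain a continuous map $\sg_g=P_{\gm_1}/\!\sim\ \to\ P_{\gm_2}/\!\sim\ =\sg_g$, check that it is a bijection (the equivalence relations correspond under $\Phi$, so equivalence classes map bijectively), and conclude it is a homeomorphism since it is a continuous bijection between compact Hausdorff spaces. By construction it sends $\gm_1$, the image of $\partial P_{\gm_1}$, to $\gm_2$, giving the desired topological equivalence. The genuinely delicate bookkeeping is at the $4$-valent vertices of $\gm$, where several corners of the polygon are glued together: one must verify that $\Phi$ respects these corner identifications. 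This, however, follows automatically from equality of the gluing words, since the cyclic word already determines how consecutive sides, and hence the corners between them, are matched under the pairing.
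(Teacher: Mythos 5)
Your proposal is correct and follows essentially the same route as the paper: both directions are handled by passing to the polygon $P_{\gm}$ with its edge-pairing, building (or reading off) a homeomorphism of polygons that respects identified sides, and letting it descend to the quotient. You simply supply more detail than the paper does on the descent step and the compact-Hausdorff argument, which is fine.
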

\begin{proof} Let us assume that $\gm_1$ and $\gm_2$ have the same gluing patterns. Since $\sg_g-\gm_1$ and $\sg_g-\gm_2$ are disks, they are homeomorphic. The fact that $\gm_1$ and $\gm_2$ have the same gluing patterns implies that one can choose a homeomorphism $$\tilde{\phi}:P_{\gm_1}\longrightarrow P_{\gm_2}$$ such that $\tilde{\phi}$ maps a couple of identified sides on $P_{\gm_1}$ to a couple of identified sides on $P_{\gm_2}$. Therefore, $\tilde{\phi}$ factors to $\phi$ on the quotient (identification of sides) and $\phi(\gm_1)=\gm_2$.\vspace{0.3cm}\\
Conversely, if $\phi(\gm_1)=\gm_2$ then $\gm_1$ and $\gm_2$ have the same gluing pattern. 
\end{proof} 
\end{paragraph}

\begin{paragraph}{One-faced collections as permutations:}
Maps on surfaces can be described by a triple of permutations which satisfy some conditions (see \cite{Zvk} for the combinatorial definition of maps). We restrict that definition to the case of one-faced collection.

To a one-faced collection we associate $H$: the set of oriented edges, an involution $\alpha$ of $H$ which maps an oriented edge to the same edge with opposite orientation and a permutation $\mu$ whose cycles are oriented edges emanating from vertices when we turn counter-clockwise around them.   
\begin{definition}\label{combin}
The triple $(H,\alpha, \mu)$ is the combinatorial definition of a one-faced collection and $\gamma:=\alpha\mu$ describes the face of $\gm$.
\end{definition}
If $W_{\gm}$ is a gluing pattern of $\gm$, we can take $H$ to be the set of letters of $W_{\gm}$. The permutation $\gamma$ is then the shift to the right and it corresponds to the unique face. The cycles of $\alpha$ and $\mu$ correspond to the edges and vertices of $\gm$, respectively. Moreover, if we fix an origin $x\in H$  we get a natural order from $\gamma$:
$$x<\gamma(x)<......<\gamma^{8g-3}(x).$$
Changing the origin, the order above changes cyclically. 

The cycles of $\mu$ are in one to one correspondence with the vertices of $\gm$. (see Figure ~\ref{avertex}).

If $x$ and $y$ are two oriented edges with $x<y$, they define two intervals in a gluing pattern for $\gm$: 
\begin{exmp} The one-faced collection on the torus is given by the following permutations: \[\mu=(a \bar{b} \bar{a} b); \quad \alpha=(a \bar{a})(b \bar{b}); \quad \gamma=(a b \bar{a} \bar{b}).\]
\end{exmp} 
\end{paragraph}
\end{section}

\begin{section}{Surgery and connected sum on one-faced collections}\label{sect3}
In this section, we define two topological operations on the set of one-faced collections. 
\begin{paragraph}{Surgery on a one-faced collection:}
Let $\gm$ be a one-faced collection on ~$\sg_g$, $x$ and $y$ be two oriented edges of $\gm$ ($x$ and $y$ correspond to two sides of $P_{\gm}$). Since $\gm$ is one-faced, there is a unique homotopy class of simple arcs whose interiors are disjoint from $\gm$ and with endpoints in $x$ and $y$; let us denote it by ~$\lambda_{x,y}$. We obtain a new collection denoted by ~$\sigma_{x,y}(\gm)$ by "cutting-open" $\gm$ along $\lambda_{x,y}$ (see Figure ~\ref{surgery}). The collection $\sigma_{x,y}(\gm)$ is not necessarily one-faced.
\begin{figure}[htbp]
\begin{center}
\[
   \xymatrix{
\includegraphics[scale=0.10]{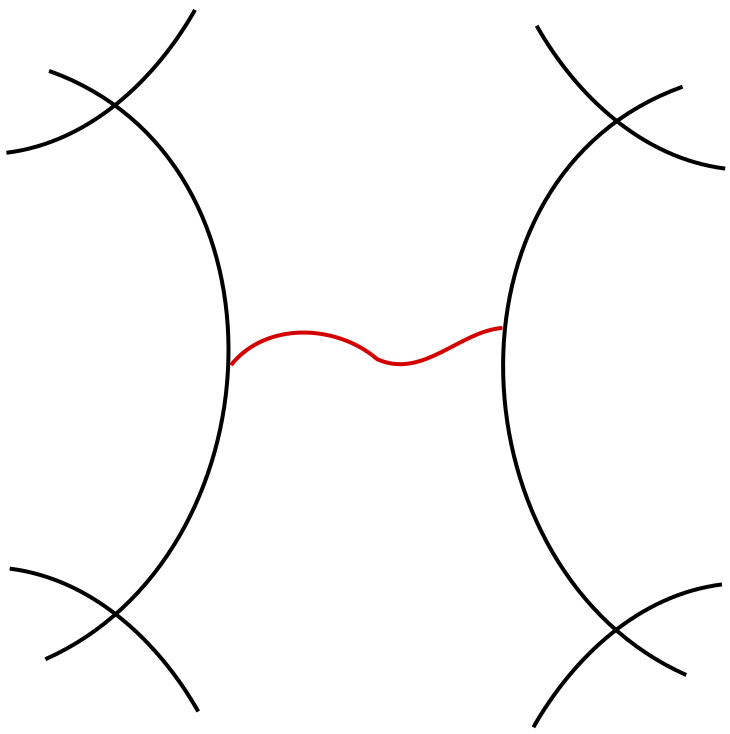}  &  &
\includegraphics[scale=0.10]{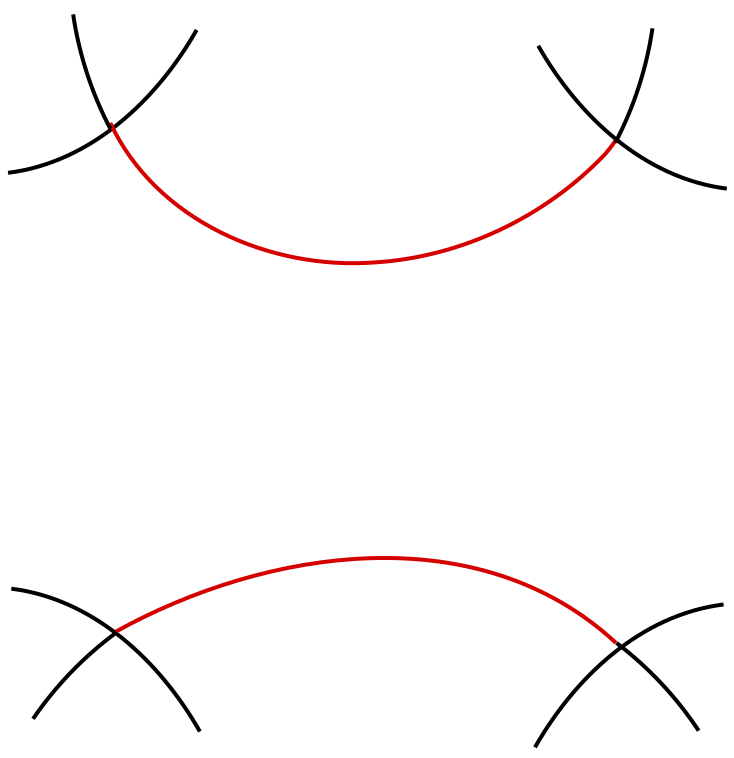}
\put(-65,18){\small{$x$}}
\put(-56,18){\small{$y$}}
\put(-62,10){\small{$\lambda_{x,y}$}}
}
\]
\caption{Surgery between two oriented edges $x$ and $y$ along ~$\lambda_{x,y}$. On the left-hand side, we have the arc $\lambda_{x,y}$ from $x$ to $y$ and on the right-hand side the cutting-open operation along ~$\lambda_{x,y}$.}
\label{surgery}
\end{center}
\end{figure}
 \begin{definition}
Let $\gm$ be a one-faced collection, $x$ and $y$ be to oriented edges of ~$\gm$. We say that $\{x,y\}$ and $\{\bar{x},\bar{y}\}$ are \textit{intertwined} if $\bar{x}$ and $\bar{y}$ are not both in $\overset{\frown}{[x,y]}$ and not both in $\overset{\smile}{[y,x]}$. It means that ~$\gm$ admits a gluing pattern of the form $w_1\bm{x}w_2\bm{\bar{x}}w_3\bm{y}w_4\bm{\bar{y}}$.

Otherwise, we say that $\{x,y\}$ and $\{\bar{x},\bar{y}\}$ are \textit{not intertwined}.
\noindent By abuse, we will just say that $x$ and $y$ are intertwined or not intertwined.
\end{definition}

Now, the following lemma gives a necessary and sufficient condition for the above operation to preserve the one-faced character.   
\begin{lemma}\label{lemsurg}
Let $\gm$ be a one-faced collection, $x$ and $y$ be two oriented edges of $\gm$. Then $\sigma_{x,y}(\gm)$ is one-faced if and only if $x$ and $y$ are intertwined. In this case, we call the operation a \rm{surgery} \textit{on ~$\gm$ between $x$ and $y$.} 

\noindent \textit{Moreover, if $w_1\bm{x}w_2\bm{\bar{x}}w_3\bm{y}w_4\bm{\bar{y}}$ is a gluing pattern for $\gm$ then, $$w_3\bm{X}w_2\bm{\bar{X}}w_1\bm{Y}w_4\bm{\bar{Y}}$$ is a gluing pattern for ~$\sigma_{x,y}(\gm)$}.
\end{lemma}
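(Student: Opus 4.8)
The plan is to work entirely inside the complementary polygon $P_{\gm}$ and to track what the cutting-open operation does to its boundary word. Since $\gm$ is one-faced, $\sg_g-\gm$ is the single disk $P_{\gm}$, whose boundary reads the gluing pattern $W_{\gm}$. The hypothesis that $x$ and $y$ are intertwined is, by definition, exactly the statement that (after a cyclic rotation and relabeling) $W_{\gm}=w_1\bm{x}w_2\bm{\bar x}w_3\bm{y}w_4\bm{\bar y}$, so I may assume this normal form from the start. I would first record the picture: the arc $\lambda_{x,y}$ is the unique chord of $P_{\gm}$ joining the side $x$ to the side $y$, its interior lying in the open disk, and the surgery replaces the two strands of $\gm$ crossing a band neighbourhood of $\lambda_{x,y}$ by the two strands running parallel to $\lambda_{x,y}$, as in Figure~\ref{surgery}.

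The heart of the argument is a boundary trace. Cutting $P_{\gm}$ along $\lambda_{x,y}$ separates the disk into two sub-polygons, one carrying the boundary arc $w_2\bm{\bar x}w_3$ (the arc from $x$ to $y$ passing through $\bar x$) and the other carrying $w_4\bm{\bar y}w_1$ (the arc passing through $\bar y$). I would then re-glue the four severed strand-ends the way the band prescribes and follow the boundary of the resulting complementary region letter by letter. The reconnection has the effect of interchanging the two ``outer'' arcs $w_1$ and $w_3$, while leaving $w_2$ trapped between the two sides $X,\bar X$ of the new edge and $w_4$ between $Y,\bar Y$; carrying this out and relabeling the four modified sides as $X,\bar X,Y,\bar Y$ produces a single closed word $w_3\bm{X}w_2\bm{\bar X}w_1\bm{Y}w_4\bm{\bar Y}$. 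Because the trace closes up only after reading all $8g-4$ letters, the new complement is a single disk, which simultaneously proves the ``if'' direction and the displayed formula for $\sigma_{x,y}(\gm)$.

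For the converse I would argue the contrapositive and run the same trace in the non-intertwined configurations. If $x$ and $y$ are not intertwined then $\bar x$ and $\bar y$ lie on a common side of the chord $\lambda_{x,y}$, so up to rotation and relabeling $W_{\gm}=w_1\bm{x}w_2\bm{y}w_3\bm{\bar y}w_4\bm{\bar x}$ (or the mirror arrangement). Performing the identical band reconnection now splits the boundary into two disjoint closed words rather than one: the strands that the band joins are ``parallel'' rather than ``crossing'', so the single boundary cycle of $P_{\gm}$ breaks into two. Hence $\sigma_{x,y}(\gm)$ has (at least) two faces and is not one-faced. Conceptually this is the classical dichotomy for a band attached along an arc joining two boundary sides of a disk: the resulting boundary stays connected exactly when the two feet of the band are linked, and here ``linked'' translates precisely into ``intertwined''.

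The step I expect to be delicate is the bookkeeping of orientations in the boundary trace --- keeping straight which severed end is joined to which, and in particular checking that no segment among $w_1,w_2,w_3,w_4$ gets reversed, so that the formula is exactly a transposition of $w_1$ and $w_3$ with the bars falling on $X$ and $Y$. I would pin this down by fixing an orientation of $\lambda_{x,y}$ and of its band, reading the two sides of the band consistently, and verifying the four local pictures at $x,\bar x,y,\bar y$ against Figure~\ref{surgery}. Establishing the clean linked/unlinked alternative in the non-intertwined case, across its two cyclic sub-arrangements, is the other point requiring care; everything else is a direct transcription of the trace into the gluing pattern.
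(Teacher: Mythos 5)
Your treatment of the ``if'' direction and of the displayed gluing pattern is essentially the paper's argument: both work inside the polygon $P_{\gm}$ and perform the classification-of-surfaces cut-and-paste along the chord $\lambda_{x,y}$ (Figure~\ref{cutpaste}), and your letter-by-letter boundary trace produces the same word $w_3\bm{X}w_2\bm{\bar X}w_1\bm{Y}w_4\bm{\bar Y}$. Where you genuinely diverge is the converse. The paper does not re-run the trace in the non-intertwined configuration; it instead exhibits an essential closed curve disjoint from $\sigma_{x,y}(\gm)$, built from $\lambda_{x,y}$ and an arc of the old collection, which is essential because it meets $\gm$ algebraically twice (Figure~\ref{degenerate}); since a disk contains no essential curve, the new collection cannot be one-faced. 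Your route shows instead that the boundary word splits into two closed cycles. Both are valid. Yours is more uniform --- one computation serves both directions --- but it does oblige you to check both non-intertwined sub-arrangements ($\bar x$ before $\bar y$ and $\bar y$ before $\bar x$ in the common interval), and to conclude ``not one-faced'' you should be slightly more careful than ``has at least two faces'': two boundary cycles could a priori bound a single annular complementary region rather than two regions, but in either case the complement is not a single disk (a disk has one boundary cycle), which is all the lemma needs. The paper's essential-curve argument buys a one-picture topological proof of the converse at the cost of a separate construction.
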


\begin{proof}
Since the operation along $\lambda_{x,y}$ leads to a new collection ~$\gm':=\sigma_{x,y}(\gm)$, all we have to do is to prove that $\gm'$ is one-faced. We use a cut and past argument similar to several proofs of the classification of surfaces (see Figure~\ref{cutpaste}).

Assume first that $x$ and $y$ are intertwined. When we "cut-open" along ~$\lambda_{x,y}$, the edges $\{x,\bar{x}\}$ and $\{y,\bar{y}\}$ get replaced by new edges $\{X,\bar{X}\}$ and $\{Y,\bar{Y}\}$. When we cut along the two new edges (in the polygonal description) and glue along the old ones (see Figure ~\ref{cutpaste}), we obtain a polygon; that is $\gm'$ is one-faced with gluing pattern $$w_3Xw_2\bar{X}w_1Yw_4\bar{Y}.$$

\begin{figure}[htbp]
\begin{center}
\[
   \xymatrix{
       \includegraphics[scale=0.1]{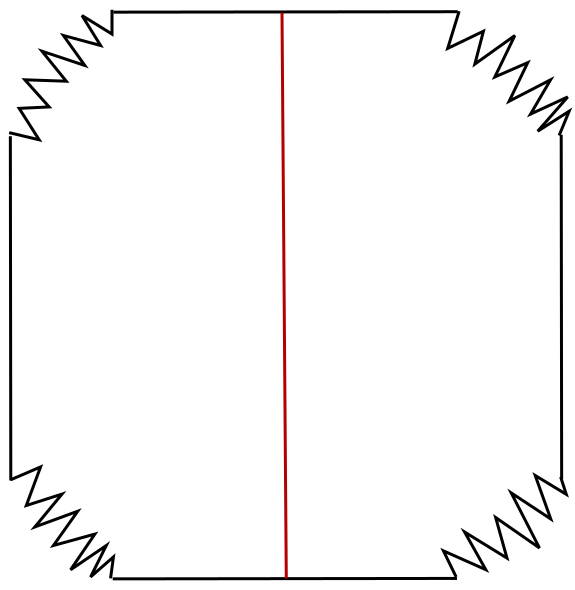} \put(-23.5,19){\small{$w_1$}} \put(-11,21){\small{$x$}} \put(-2,19){\small{$w_2$}} \put(0,10){\small{$\bar{x}$}} \put(-1.2,0){\small{$w_3$}} \put(-11,-1.8){\small{$y$}} \put(-23.5,0){\small{$w_4$}} \put(-23,10){\small{$\bar{y}$}} \put(-10,10){\small{$\lambda_{x,y}$}} & & & \includegraphics[scale=0.1]{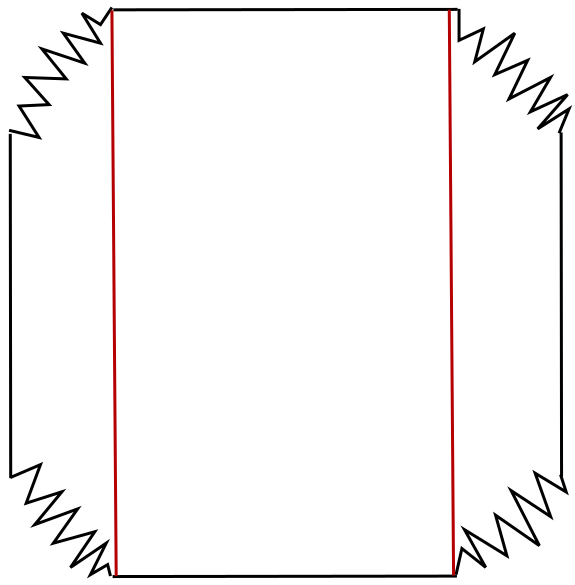} \put(-23.5,19){\small{$w_1$}} \put(-11,21){\small{$x$}} \put(-2,19){\small{$w_2$}} \put(0,10){\small{$\bar{x}$}} \put(-1.2,0){\small{$w_3$}} \put(-11,-1.8){\small{$y$}} \put(-23.5,0){\small{$w_4$}} \put(-23,10){\small{$\bar{y}$}}\\
      \includegraphics[scale=0.1]{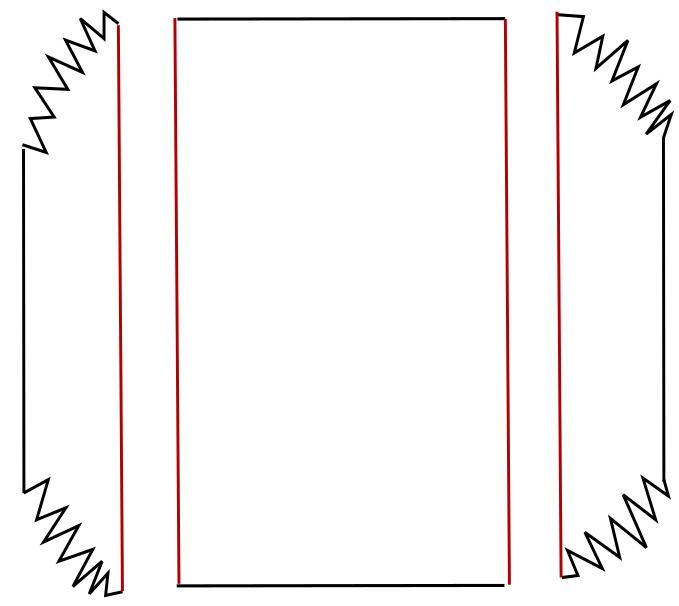} \put(-26.5,19){\small{$w_1$}} \put(-12,21.5){\small{$x$}} \put(-1,19){\small{$w_2$}} \put(0,10){\small{$\bar{x}$}} \put(-1.2,0){\small{$w_3$}} \put(-12,-1.8){\small{$y$}} \put(-26,0){\small{$w_4$}} \put(-25.7,10){\small{$\bar{y}$}}& && \includegraphics[scale=0.1]{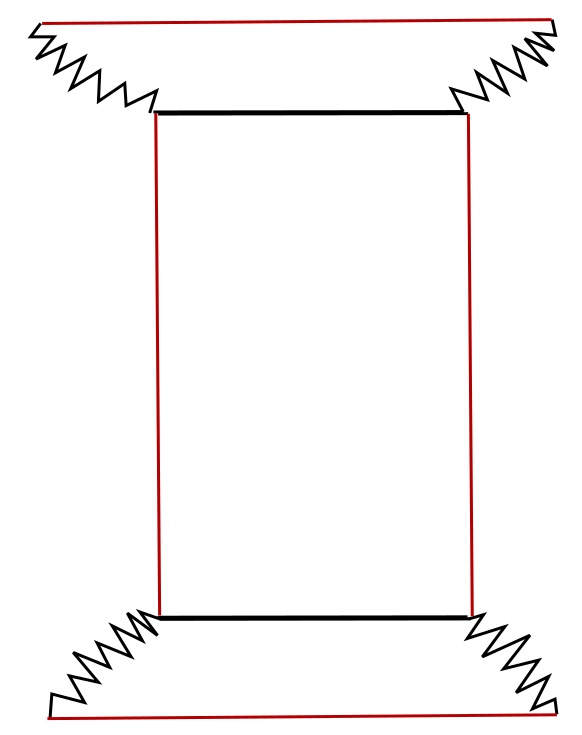}\put(-21.5,21.5){\small{$w_3$}} \put(-11,26){\small{$X$}} \put(-2,21.5){\small{$w_2$}} \put(-3,11){\small{$\bar{X}$}} \put(-1.5,2.5){\small{$w_1$}} \put(-11,-2.5){\small{$Y$}} \put(-22.5,2.5){\small{$w_4$}} \put(-18.5,11){\small{$\bar{Y}$}}
                 }
\put(-47,8){\huge{$\longrightarrow$}}
\put(-47,-29){\huge{$\longrightarrow$}}
\put(-47,8){\huge{$\longrightarrow$}}
\put(-43,-11){\huge{$\swarrow$}}
\]
\caption{Cut and paste on the polygon $P_{\gm}$.}
\label{cutpaste}
\end{center}
\end{figure} 

On the other hand, if $x$ and $y$ are not intertwined, one constructs an essential curve disjoint from $\gm'$, so $\gm'$ is not one-faced (see Figure ~\ref{degenerate}).

\begin{figure}[htbp]
\begin{center}
\includegraphics[scale=0.13]{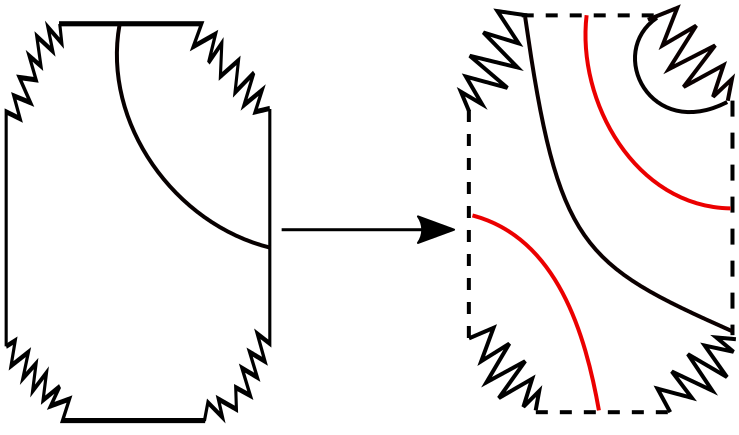}
\put(-29,20){\small{$x$}}
\put(-21,12){\small{$y$}}
\caption{The two arcs in red color define an essential closed curve on $\sg_g$ disjoint from $\gm'$ since it intersects $\gm$ algebraically twice.}
\label{degenerate}
\end{center}
\end{figure}

\end{proof}
\begin{remark} 
 The word $W_{\sigma_{x,y}(\gm)}$ in Lemma ~\ref{lemsurg} is obtained by permuting $w_1$ and $w_3$. It is also equivalent to permute $w_2$ and $w_4$.
 \end{remark}
 
 \begin{remark}
 If $x$ and $y$ are two intertwined  oriented edges, then $\bar{x}$ and $\bar{y}$ are also intertwined. Moreover one has $\sigma_{x,y}(\gm)=\sigma_{\bar{x},\bar{y}}(\gm)$. In fact, by Lemma ~\ref{lemsurg}, if $W_{\gm}=w_1xw_2\bar{x}w_3yw_4\bar{y}$ is a gluing pattern for $\gm$, $W_{\sigma_{\bar{x},\bar{y}}(\gm)}=w_1Xw_4\bar{X}w_3Yw_2\bar{Y}$ is a gluing pattern for $\sigma_{\bar{x},\bar{y}}(\gm)$, and it is equivalent to $W_{\sigma_{x,y}(\gm)}=w_3Xw_2\bar{X}w_1Yw_4\bar{Y}$ up to cyclic permutation and relabeling. 
 \end{remark}
 
 \begin{remark}
Given a one-faced collection, there are always intertwined  pairs unless it is the one-faced collection in the torus. Indeed, if all pairs of ~$\gm$ are not intertwined a gluing pattern for $\gm$ is given by $$W_{\gm}=x_1x_2.....x_{4g-2}\bar{x}_1\bar{x}_2....\bar{x}_{4g-2}.$$
After identifying the sides of $P_{\gm}$, all the vertices of $P_{\gm}$ get identified. Thus $\gm$ has only one self-intersection point. It follows that $g=1$ and that $\gm$ is the only one-faced collection with one self-intersection point, namely $\gm_{\mathbb{T}}$.  
\end{remark}
\end{paragraph}

\begin{paragraph}{Connected sum:} Let $\gm_1$ and $\gm_2$ be two one-faced collections on two surfaces $\sg_1$ and ~$\sg_2$, respectively. Let $D_1$ and $D_2$ be two open disks on $\sg_1$ and ~$\sg_2$, disjoint from ~$\gm_1$ and ~$\gm_2$, respectively. Let ~$\sg_{g_1}\#\sg_{g_2}$ be the connected sum along $D_1$ and $D_2$. Then ~$(\sg_{g_1}\#\sg_{g_2},\gm_1\cup\gm_2)$ is a genus $g_1+g_2$ surface endowed with a collection ~$\gm_1\cup\gm_2$. 
Since ~$\gm_1$ and $\gm_2$ are one-faced, the complement of ~$\gm_1\cup\gm_2$ in ~$\sg_{g_1}\#\sg_{g_2}$ is an annulus. 

Now, let $x$ and $y$ be two oriented edges of $\gm_1$ and $\gm_2$ respectively, and ~$\lambda_{x,y}$ a simple arc on $\sg_{g_1}\#\sg_{g_2}$ from $x$ to $y$ whose interior is disjoint from ~$\gm_1\cup\gm_2$. The arc $\lambda_{x,y}$ joins the two boundary components of $\sg_{g_1}\#\sg_{g_2}-{\gm_1\cup\gm_2}$. Therefore the graph $\gm_1\cup\gm_2\cup\lambda_{x,y}$ fills $\sg_{g_1}\#\sg_{g_2}$ with one disk in its complement.

Thus, the collection $\gm':=(\gm_1\cup\gm_2\cup\lambda_{x,y})/\lambda_{x,y}$ ---the quotient here means the contraction of $\lambda_{x,y}$ to a point--- (see Figure ~\ref{sumword}) is a one-faced collection. We say that $\gm'$ is the \textit{connected sum} of the marked collections $(\gm_1,x)$ and ~$(\gm_2,y)$.

\begin{figure}[htbp]
\begin{center}
\[
   \xymatrix{
 \includegraphics[scale=0.1]{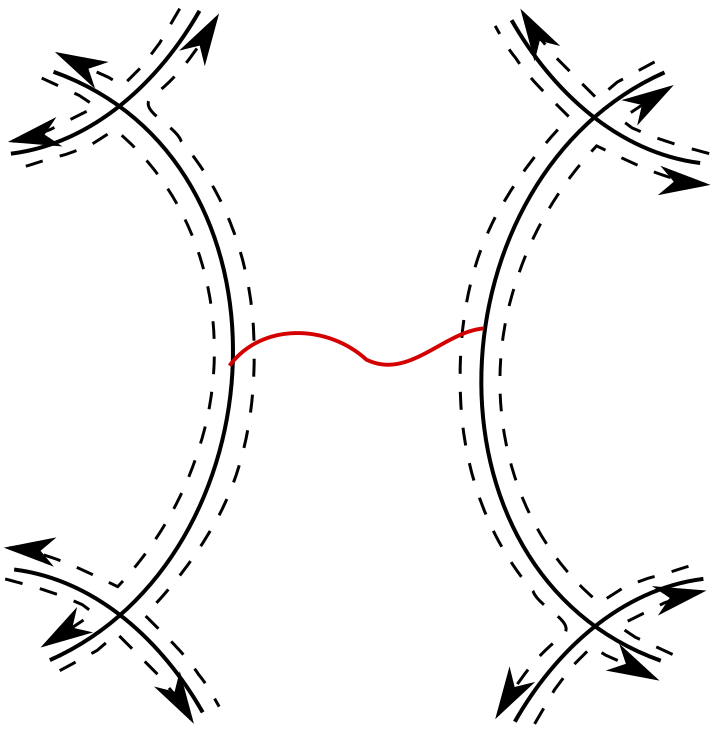}&  &\includegraphics[scale=0.1]{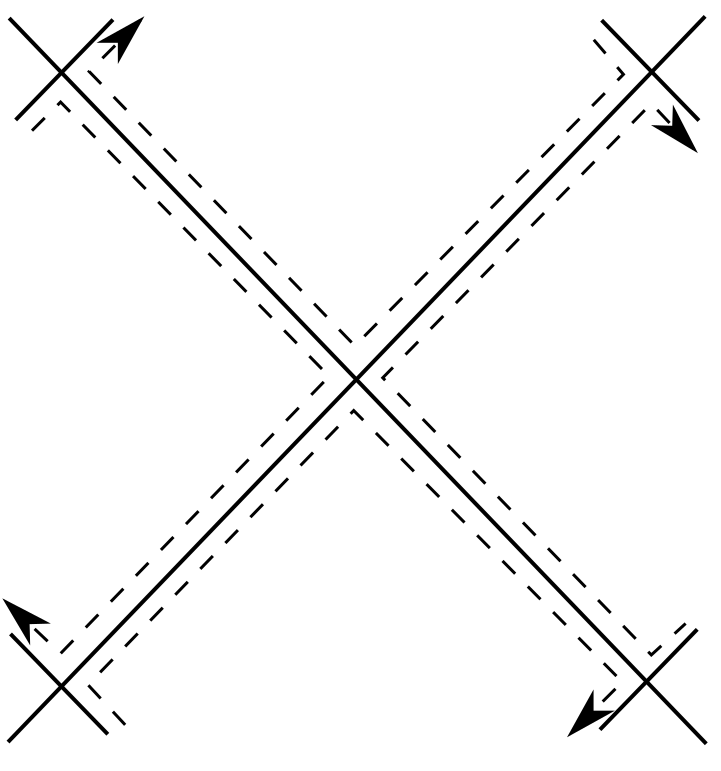}
\put(-64,18){\small{$x$}}
\put(-68,16.5){\small{$\bar{x}$}}
\put(-56.5,18){\small{$y$}}
\put(-52.5,16.5){\small{$\bar{y}$}}
\put(-62,10){\small{$\lambda_{x,y}$}}
\put(-19,21.8){\small{$x_1$}}
\put(-22.5,18){\small{$\bar{x}_1$}}
\put(-22.5,8){\small{$x_2$}}
\put(-19,5){\small{$\bar{x}_2$}}
\put(-10,5){\small{$y_1$}}
\put(-5.5,8){\small{$\bar{y}_1$}}
\put(-5.5,18){\small{$y_2$}}
\put(-10,21){\small{$\bar{y}_2$}}
}
\]
\caption{}
\label{sumword}
\end{center}
\end{figure}

\begin{definition}
Let $\gm$ be a one-faced collection, $x$ and $y$ be two oriented edges of $\gm$. We say that $x$ and $y$ are \textit{symmetric} if the gluing pattern for $\gm$ starting at $x$ is the same as the one starting at $y$ up to relabeling. %(cyclic permutation is not allowed here).  
\end{definition}
\begin{exmp}
On $\gm_{\mathbb{T}}$, any two oriented edges are symmetric.  
\end{exmp}

The following lemma states how we obtain a gluing pattern for $(\gm_1,x)\#(\gm_2,y)$ from gluing patterns for $\gm_1$ and $\gm_2$. 

\begin{lemma}\label{sum}
If $W_{\gm_1}=\bm{x}w_1\bm{\bar{x}}w_2$ (respectively $W_{\gm_2}=\bm{y}w'_1\bm{\bar{y}}w'_2$) is a gluing pattern for $\gm_1$ (respectively $\gm_2$), then  \[\bm{x_1}w_1\bm{\bar{x}_1x_2}w_2\bm{\bar{x}_2}\bm{y_1}w'_1\bm{\bar{y}_1}\bm{y_2}w'_2\bm{\bar{y}_2}\] is a gluing pattern for $(\gm_1,x)\#(\gm_2,y)$.

Moreover, $(\gm_1,x)\#(\gm_2,y)$ and $(\gm_1,x)\#(\gm_2,y')$ are topologically equivalent if $y$ and $y'$ are symmetric.    
\end{lemma}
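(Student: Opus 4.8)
The plan is to prove the two assertions in turn: first the explicit gluing pattern of $(\gm_1,x)\#(\gm_2,y)$, and then its invariance when $y$ is replaced by a symmetric oriented edge.

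\emph{The gluing pattern.} I would argue on the single face of the connected sum, in the same spirit as the cut-and-paste proof of Lemma~\ref{lemsurg}. Recall that the complement of $\gm_1\cup\gm_2$ in $\sg_{g_1}\#\sg_{g_2}$ is an annulus $A$ whose two boundary circles read off $W_{\gm_1}=xw_1\bar{x}w_2$ and $W_{\gm_2}=yw'_1\bar{y}w'_2$. The arc $\lambda_{x,y}$ meets these circles at one interior point $p$ of the edge $x$ and one interior point $q$ of the edge $y$; in the graph $\gm_1\cup\gm_2\cup\lambda_{x,y}$ these are degree-$3$ vertices, and contracting $\lambda_{x,y}$ merges them into a single degree-$4$ vertex $v$. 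Thus $v$ is a transverse crossing of the two subdivided edges: $x$ splits into two edges with sides $\{x_1,\bar{x}_1\}$ and $\{x_2,\bar{x}_2\}$, similarly $y$ gives $\{y_1,\bar{y}_1\}$ and $\{y_2,\bar{y}_2\}$, while every other edge and the four words $w_1,w_2,w'_1,w'_2$ are untouched. Cutting $A$ along $\lambda_{x,y}$ turns it into a disk, and after the contraction this disk is the unique face $P_{\gm'}$; reading its boundary and following the rerouting that the crossing $v$ imposes on the two strands yields the word $x_1w_1\bar{x}_1x_2w_2\bar{x}_2\,y_1w'_1\bar{y}_1y_2w'_2\bar{y}_2$ (see Figure~\ref{sumword}). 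I would make this rigorous through the combinatorial model $(H',\alpha',\mu')$: set $\alpha'$ equal to $\alpha_1,\alpha_2$ with $(x\,\bar{x}),(y\,\bar{y})$ replaced by $(x_1\,\bar{x}_1),(x_2\,\bar{x}_2),(y_1\,\bar{y}_1),(y_2\,\bar{y}_2)$, let $\mu'$ agree with $\mu_1,\mu_2$ away from $v$ (relabelling the two affected oriented edges) and be the four-cycle of the inner oriented edges at $v$, and then verify that $\gamma'=\alpha'\mu'$ is a single $(8g-4)$-cycle spelling the claimed word.

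\emph{The main obstacle.} The delicate point is pinning down the rotation of $\mu'$ at the new vertex $v$, equivalently the chirality of the crossing: this is exactly what decides whether the two new letters appear as in the statement or instead in the ``consecutive'' pattern $x_1x_2w_1\bar{x}_2\bar{x}_1w_2$, and only one of these is produced by the contraction of $\lambda_{x,y}$. I would fix the convention from Figure~\ref{sumword} (orienting $A$ and reading the oriented edges counter-clockwise around $v$) and confirm it against a model, e.g. $\gm_1=\gm_2=\gm_{\mathbb{T}}$ with $W_{\gm_1}=ab\bar{a}\bar{b}$: there the recipe gives $a_1b\bar{a}_1a_2\bar{b}\bar{a}_2c_1d\bar{c}_1c_2\bar{d}\bar{c}_2$, whose permutation $\gamma$ is a single $12$-cycle with $V=3$, $E=6$, $F=1$, hence $\chi=-2$ and genus $2$, as required. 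An Euler-characteristic bookkeeping check, using $|w_1|+|w_2|=8g_1-6$ and $|w'_1|+|w'_2|=8g_2-6$ to get total length $8(g_1+g_2)-4$, confirms that the word has the correct size and that $\gamma'$ is a single cycle.

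\emph{The symmetry statement.} For the last sentence I would read off from the formula that the gluing pattern of $(\gm_1,x)\#(\gm_2,y)$ depends on $(\gm_2,y)$ only through the pattern $yw'_1\bar{y}w'_2$ of $\gm_2$ based at $y$. If $y$ and $y'$ are symmetric, then by definition the pattern of $\gm_2$ based at $y'$ is $y'w''_1\bar{y'}w''_2$, where $w''_1,w''_2$ are obtained from $w'_1,w'_2$ by a relabelling; substituting into the formula, the patterns of $(\gm_1,x)\#(\gm_2,y)$ and $(\gm_1,x)\#(\gm_2,y')$ coincide up to that same relabelling, the $\gm_1$-block being identical. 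By the Proposition stating that two one-faced collections with the same gluing pattern up to cyclic permutation and relabelling are topologically equivalent, the two connected sums are topologically equivalent. I expect this part to be routine once the formula is established; the only care needed is that the relabelling witnessing the symmetry of $y$ and $y'$ is applied consistently to the $y$-block and leaves the $\gm_1$-block fixed.
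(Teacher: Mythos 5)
Your argument is correct and follows essentially the same route as the paper, whose entire proof of this lemma is the single sentence ``The proof can be read on Figure~\ref{sumword}'': you read the new gluing pattern off the local picture of the contracted arc and then deduce the symmetry statement by relabelling and invoking the classification of one-faced collections by their gluing patterns. Your additional care about the cyclic order at the new $4$-valent vertex, the torus test case, and the length/Euler-characteristic bookkeeping simply supply the details the paper leaves to the figure.
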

\begin{proof}
The proof can be read on Figure ~\ref{sumword}. 
\end{proof}
Lemma \ref{sum} implies that the connected sum of a one-faced collection $\gm$ with $\gm_{\mathbb{T}}$ depend only on the oriented edge we choose on $\gm$, since all oriented edges of $\gm_{\mathbb{T}}$ are symmetric.
\end{paragraph}
\end{section}

\begin{section}{Surgery classification of one-faced collections}\label{sect4}
In this section, we study how surgeries connects one-faced collections. 
\begin{paragraph}{Simplification of one-faced collections:} Let $\gm$ be a one-faced collection, $(H, \alpha, \mu, \gamma)$ the permutations associated to $\gm$ (Definition \ref{combin}) and $x$ an oriented edge of $\gm$. Then the oriented edges $x$ and $C(x):=\gamma\alpha\gamma(x)$ belong to the same curve $\beta\in\gm$; $x$ and $C(x)$ are consecutive along $\beta$ (see Figure ~\ref{cons}). Moreover, the sequence $(C^n(x))_n$ is periodic and it travels through all edges of $\beta$.
\begin{definition}
Let $\gm$ be a one-faced collection and $\theta\in\gm$ a simple curve. The curve $\theta$ is \textit{1-simple} if $\theta$ intersects exactly one time $\gm-\theta$.
\end{definition}

Note that we have $C(x)=x$ if and only if $x$ is a side of a 1-simple curve ~$\theta\in\gm$. We denote by $S_{\gm}$ the number of 1-simple closed curves in $\gm$.
\begin{figure}[htbp]
\begin{center}
\includegraphics[scale=0.18]{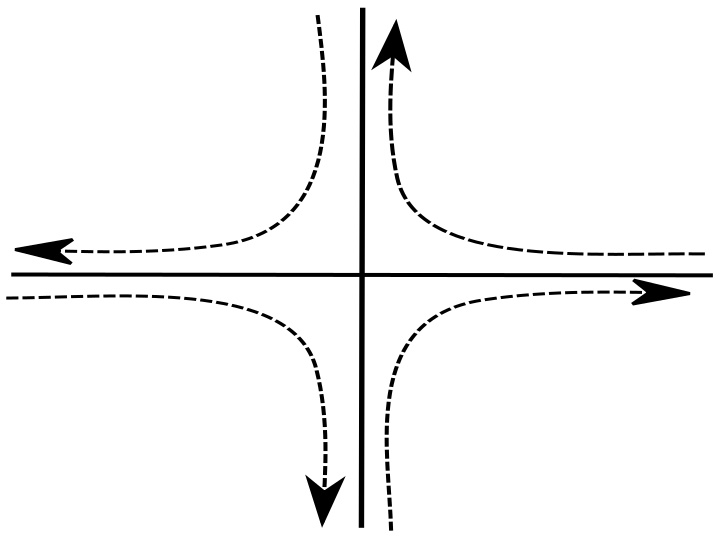}
\put(-45,12){$x$}
\put(-44,2){$y=\gamma(x)$}
\put(-20,2){$\bar{y}=\alpha\gamma(x)$}
\put(-14,12){$C(x):=\gamma\alpha\gamma(x)$}
\hspace{1,5cm}
\includegraphics[scale=0.25]{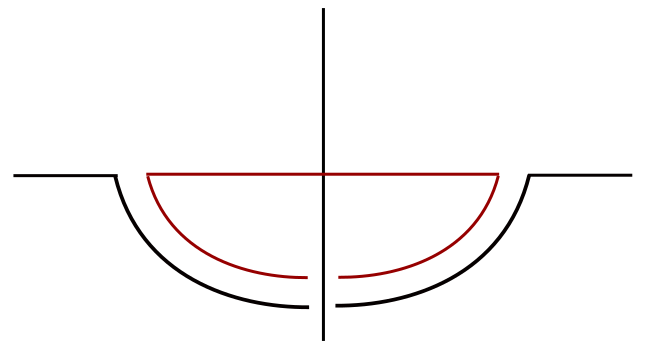}
\caption{A simplification; $x$ and $C(x)$ are consecutive.}
\label{cons}
\end{center}
\end{figure}

\begin{definition}
Assume that $x$ is an edge not lying on a 1-simple component of~$\gm$, and that $x$ and $C(x)$ are intertwined. The operation transforming~$\gm$ into~$\sigma_{x,C(x)}(\gm)$ is called a \textit{simplification}.
\end{definition}

\noindent The name is explained by: 

\begin{lemma}
If $\gm':=\sigma_{x,C(x)}(\gm)$ is a simplification, then ~$S_{\gm'}=S_{\gm}+1$. In other words, a surgery on $\gm$ between $x$ and $C(x)$ creates an additional 1-simple curve in $\gm'$.
\end{lemma}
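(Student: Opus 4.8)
The plan is to read everything off the gluing pattern together with Lemma~\ref{lemsurg}. Since $x$ and $C(x)$ are intertwined, I would fix a gluing pattern
\[W_{\gm}=w_1\bm{x}w_2\bm{\bar x}w_3\bm{C(x)}w_4\bm{\overline{C(x)}}.\]
The definition $C(x)=\gamma\alpha\gamma(x)$ pins down two letters of this word: writing $e:=\gamma(x)$, the letter $e$ is the first letter of $w_2$ (it is the successor of $x$), and since $C(x)=\gamma(\bar e)$ the letter $\bar e$ is the last letter of $w_3$ (it is the predecessor of $C(x)$). By Lemma~\ref{lemsurg} the surgered collection has gluing pattern $W_{\gm'}=w_3\bm{X}w_2\bm{\bar X}w_1\bm{Y}w_4\bm{\bar Y}$, and substituting $w_2=e\,u$ and $w_3=v\,\bar e$ this word contains the block $\bar e\,X\,e$. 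Hence in $\gm'$ the predecessor of $X$ is $\bar e$ and its successor is $e$, so $\gamma'(X)=e$ and $\gamma'(\bar e)=X$, giving $C'(X)=\gamma'\alpha\gamma'(X)=\gamma'(\bar e)=X$. Thus the $C'$-orbit of $X$ is the single edge $\{X,\bar X\}$, a new $1$-simple curve $\theta_X$ of $\gm'$; it is genuinely new because $x$ lies on a non-$1$-simple component, so $\{x,\bar x\}$ was not $1$-simple in $\gm$.

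First I would record the algebraic input behind the whole argument. Because $\gm$ is $4$-valent, every cycle of $\mu$ has length $4$, so $\mu^4=\mathrm{id}$; combined with $\gamma=\alpha\mu$ this yields $\alpha C\alpha=\mu^{2}\alpha=\mu^{-2}\alpha=C^{-1}$ (and likewise for $\gm'$). In particular the fixed points of $C$ come in pairs $\{z,\bar z\}$, so $S_{\gm}=\tfrac12\lvert\mathrm{Fix}(C)\rvert$, and it suffices to prove $\lvert\mathrm{Fix}(C')\rvert=\lvert\mathrm{Fix}(C)\rvert+2$. The same identity guarantees the consistency $C'(\bar X)=\bar X$ for the loop found above, so $\theta_X$ really closes up in both orientations.

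Next I would isolate what the surgery changes. Comparing successors in $W_{\gm}$ and $W_{\gm'}$, the permutation $\gamma'$ differs from $\gamma$ only at the four letters sitting at the block boundaries, namely the last letter of $w_1$ together with $\bar x$, $\bar e$ and $\overline{C(x)}$. A short computation using $\mu^4=\mathrm{id}$ (for instance the forced identity $\overline{a_1}=b_2$ between the first letter $a_1$ of $w_1$ and the last letter $b_2$ of $w_2$) then shows that $C'$ and $C$ in fact agree on every oriented edge \emph{except} the four edges $x,\bar x,C^{-1}(x),\overline{C(x)}$, all of which lie on the component $\beta$ through $x$. Consequently every curve of $\gm$ other than $\beta$ survives verbatim in $\gm'$ with its $1$-simple status intact, so no $1$-simple curve disjoint from $\beta$ is created or destroyed.

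It then remains to count the $1$-simple curves coming from $\beta$. The surgery reconnects $\beta$ along $\lambda_{x,C(x)}$, splitting the single curve $\beta$ into exactly two curves: the loop $\theta_X$ above and a curve $\beta'$ carrying the remaining $\lvert\beta\rvert-1$ edges. As $\beta$ was not $1$-simple it contributed no fixed point of $C$, while $\theta_X$ contributes the pair $\{X,\bar X\}$; hence $S_{\gm'}=S_{\gm}+1$ precisely when $\beta'$ is itself not $1$-simple. This last point is the heart of the matter and the step I expect to be the main obstacle: one must rule out that the reconnection pinches $\beta$ into \emph{two} single-edge loops. This is exactly the statement that $\beta'$ has at least two edges, which holds as soon as $\lvert\beta\rvert\ge 3$, and then $\beta'$ cannot be a single-edge loop and the count is exactly $+1$. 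The delicate point is therefore the degenerate case $\lvert\beta\rvert=2$ (a bigon), which must be treated separately, and I would complete the argument by verifying that this case is not among the configurations to which the lemma is applied.
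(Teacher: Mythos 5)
Your plan is sound and is in fact more careful than the paper's own proof, which only exhibits the new fixed point $C(X)=X$ from the gluing pattern and addresses neither the survival of the old $1$-simple curves nor the possibility of creating two new ones; your observations that $C'=C$ away from $\beta$ and that $\beta$ splits into $\theta_X$ plus a single curve $\beta'$ with $|\beta|-1$ edges are correct and fill those holes. However, the gap you flag at the end is genuine and cannot be closed in the way you propose: the degenerate case $|\beta|=2$ \emph{does} occur among legitimate simplifications, and there the stated equality $S_{\gm'}=S_{\gm}+1$ fails. Concretely, consider the genus-$2$ one-faced collection with gluing pattern
\[
W_{\gm}=x\,\bar{a}\,b\,\bar{x}\,\bar{e}\,a\,c\,\bar{d}\,e\,\bar{c}\,\bar{b}\,d,
\]
whose vertices are the $\mu$-cycles $(x\,a\,\bar{c}\,b)$, $(\bar{x}\,e\,c\,d)$ and $(\bar{a}\,\bar{b}\,\bar{d}\,\bar{e})$; one checks this has one face of length $12$ and three $4$-valent vertices. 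Here $C(x)=\gamma\alpha\gamma(x)=\gamma(a)=c$ and $C(c)=x$, so the curve $\beta$ through $x$ is an embedded bigon with edges $\{x,\bar{x}\}$ and $\{c,\bar{c}\}$, not $1$-simple (it crosses the remaining four-edge curve twice), and $S_{\gm}=0$. The letters $x,\bar{x},c,\bar{c}$ occur in intertwined cyclic order, so $\sigma_{x,C(x)}$ is a simplification; by Lemma~\ref{lemsurg} a gluing pattern for $\gm'$ is $\bar{e}\,a\,X\,\bar{a}\,b\,\bar{X}\,\bar{b}\,d\,Z\,\bar{d}\,e\,\bar{Z}$, in which \emph{both} $C'(X)=X$ and $C'(Z)=Z$. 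Hence $S_{\gm'}=2=S_{\gm}+2$ (the result is the $2$-necklace $N_2$).

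The upshot is that your reduction is exactly right --- the only possible failure is $|\beta|=2$, where $\beta'$ is a one-edge component and therefore automatically $1$-simple --- but the correct conclusion is that the lemma must be weakened to $S_{\gm'}\geq S_{\gm}+1$, with $S_{\gm'}=S_{\gm}+2$ precisely when $\beta$ is a bigon, rather than that the bigon case can be excluded. Your argument proves this corrected statement completely. Note that the weaker inequality is all that is used downstream: the termination arguments for sequences of simplifications only require that $S$ increase strictly while being bounded above by $g$, so nothing else in the paper is affected. If you rewrite the proof, keep your verification that $C'$ agrees with $C$ on every oriented edge not lying on $\beta$ (including the four letters at the block boundaries, where a direct check with $W_{\gm'}=w_3Xw_2\bar{X}w_1Zw_4\bar{Z}$ shows the values are unchanged); that is the cleanest justification that no old $1$-simple curve is destroyed, a point the paper passes over in silence.
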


\begin{proof}
Suppose that $x$ and $C(x)$ are intertwined, a gluing pattern for $\gm$ is given by:
$$W_{\gm}=(tw'_1)\bm{x}(yw'_2\bar{t})\bm{\bar{x}}(w'_3\bar{y})\bm{C(x)}w_4\bm{\overline{C(x)}}.$$
Therefore, by Lemma ~\ref{lemsurg} a gluing pattern for $\gm':=\sigma_{x, C(x)}(\gm)$ is given by: 
$$W_{\gm'}=w'_3\bar{y}\bm{X}yw'_2\bar{t}\bm{\bar{X}}tw'_1\bm{Z}w_4\bm{\bar{Z}}.$$

So, in $\gm'$ we have $C(X)=\gamma\alpha\gamma(X)=\gamma\alpha(y)=\gamma(\bar{y})=X$. It implies that $X$ is the side of simple curve which intersects $\gm'$ only once.
\end{proof}

Now if $\theta_1$ and $\theta_2$ are two 1-simple curves of a one-faced collection, then $\theta_1$ and $\theta_2$ are disjoint; otherwise $\theta_1\cup\theta_2$ would be disjoint from $\gm$, that is absurd since $\gm$ is connected. So the number $S_{\gm}$ of 1-simple curves on a one-faced collection $\gm$ is bounded by the genus $g$ of the underlying surface. Therefore a sequence of simplifications on a one-faced collection stabilizes at a collection on which no simplification can be applied anymore. 

\begin{definition}
A collection $\gm$ is \textit{non simplifiable} if one cannot do a simplification from it, i.e, $x$ and $C(x)$ are always non intertwined.
\end{definition}      
\end{paragraph}

\begin{paragraph}{Order around vertices of a non simplifiable collection:} In this paragraph, we will show that vertices of non simplifiable one-faced collections are of certain types. 

Let $\gm$ be a one-faced collection and $(H,\alpha, \mu, \gamma)$ the permutations associated to $\gm$; $H$ being the set of letters of a gluing pattern for $\gm$.\\
If we fix an origin $x_0\in H$, we then get an order on $H$:
$$x_0<\gamma(x_0)<...<\gamma^{8g-3}(x_0).$$
Therefore, if $v$ is a vertex of $\gm$ defined by a cycle $(t x y z)$ of $\mu$, we get a local order around $v$ by comparing $t$, $x$, $y$ and $z$. Since each letter corresponds to an oriented edge which leaves an angular sector of $v$ (see Figure \ref{avertex}), the local order around $v$ corresponds also to a local order on the four angular sectors around $v$ when running around $\gm$ with $\gamma$. 

\begin{definition} Let $v$ be a vertex defined by the oriented edges $(t,x:=\mu(t),y:=\mu^2(t),z:=\mu^3(t))$ with $t=\min\{t,x,y,z\}$ relatively to an order of edges on $\gm$. Then,
\begin{itemize} 
\item $v$ is a vertex of \textit{Type ~1} if $t<x<y<z$;
\item $v$ is a vertex of \textit{Type ~2} if $t<z<y<x$.
\end{itemize}
Otherwise, the vector $v$ is a vertex of \textit{Type ~3}.
\end{definition}

Up to rotation and change of origin, we have the three cases depicted on Figure ~\ref{type}.
\begin{figure}[htbp]
\begin{center}
\includegraphics[scale=0.3]{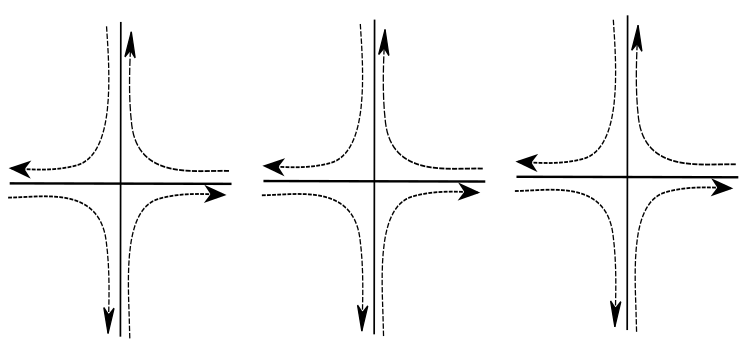}
\put(-71,37){Type 1}
\put(-43,37){Type 2}
\put(-17,37){Type 3}
\put(-71,11){1}
\put(-64,11){2}
\put(-62,21){3}
\put(-72,21){4}
\put(-44,11){1}
\put(-45,21){2}
\put(-36,21){3}
\put(-37,11){4}
\put(-17,11){1}
\put(-10,11){2}
\put(-18,22){3}
\put(-9,22){4}
\put(-77,21){\small{$z$}}
\put(-77,13){\small{$\bar{z}$}}
\put(-70,3){\small{$t$}}
\put(-64,3){\small{$\bar{t}$}}
\put(-57,13){\small{$x$}}
\put(-57,20){\small{$\bar{x}$}}
\put(-64,31){\small{$y$}}
\put(-70,31){\small{$\bar{y}$}}
\put(-51,13){\small{$\bar{z}$}}
\put(-51,21){\small{$z$}}
\put(-43.5,3){\small{$t$}}
\put(-37,3){\small{$\bar{t}$}}
\put(-29.5,14){\small{$x$}}
\put(-29.5,20){\small{$\bar{x}$}}
\put(-37,31){\small{$y$}}
\put(-43.5,31){\small{$\bar{y}$}}
\put(-24,14){\small{$\bar{z}$}}
\put(-24,21){\small{$z$}}
\put(-16.5,3){\small{$t$}}
\put(-11,3){\small{$\bar{t}$}}
\put(-3,14){\small{$x$}}
\put(-3,20.5){\small{$\bar{x}$}}
\put(-10,31){\small{$y$}}
\put(-16.5,31){\small{$\bar{y}$}}
\caption{Different types of vertices of a one-faced collection.}
\label{type}
\end{center}
\end{figure}

\begin{lemma}
A one-faced collection $\gm$ is non simplifiable if and only if all of its vertices are of Type ~1 or Type ~2.
\end{lemma}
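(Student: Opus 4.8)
The plan is to reduce the global statement to a purely local condition at each vertex, and then to a finite combinatorial check. First I would record the algebraic consequences of $\gamma=\alpha\mu$. Since $\alpha^2=\mathrm{id}$ we get $\mu=\alpha\gamma$, and hence $C=\gamma\alpha\gamma=\alpha\mu^2$. If $v$ is a vertex with $\mu$-cycle $(a_1,a_2,a_3,a_4)$ (indices mod $4$), these identities read $\gamma(a_i)=\overline{a_{i+1}}$ and $C(a_i)=\overline{a_{i+2}}$. In particular each pair $\{x,C(x)\}$ is localised at the tail-vertex $v$ of $x$, and since $\sigma_{a,b}=\sigma_{\bar a,\bar b}$ and being intertwined is invariant under barring both letters, the four edges at $v$ give only two genuinely different potential surgeries: strand $A=\sigma_{a_1,\overline{a_3}}$ and strand $B=\sigma_{a_2,\overline{a_4}}$. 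Thus $\gm$ is non simplifiable precisely when, at every vertex, neither strand is intertwined.

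Next I would localise the intertwined condition. The relation $\gamma(a_i)=\overline{a_{i+1}}$ says that in the gluing word $\overline{a_{i+1}}$ is the immediate successor of $a_i$; hence the four corners of $P_{\gm}$ sitting at $v$ are exactly the adjacent blocks $B_i=a_i\overline{a_{i+1}}$, and the cyclic order in which $B_1,B_2,B_3,B_4$ occur around the word is exactly the datum recorded by the Type of $v$ (Type $1$ is the $\mu$-order $B_1B_2B_3B_4$, Type $2$ its reverse $B_1B_4B_3B_2$, Type $3$ any other). The intertwinedness of strand $A$ is the crossing of the chords $\{a_1,\overline{a_3}\}$ and $\{\overline{a_1},a_3\}$, whose endpoints lie in the blocks $(B_1,B_2)$ and $(B_4,B_3)$; likewise strand $B$ is the crossing of $\{a_2,\overline{a_4}\}$ and $\{\overline{a_2},a_4\}$, with endpoints in $(B_2,B_3)$ and $(B_1,B_4)$. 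So whether either strand is intertwined depends only on the cyclic order of the four blocks, i.e. only on the Type of $v$.

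The computation then reduces to the $3!=6$ cyclic orders of $B_1,B_2,B_3,B_4$. Inspecting them (most safely by placing the eight letters explicitly, with $\overline{a_{i+1}}$ immediately after $a_i$) I would verify that for Type $1$ and Type $2$ neither chord pair crosses, so neither strand is intertwined, whereas for each of the four Type $3$ orders at least one chord pair crosses, producing an intertwined strand. Combined with the first paragraph this gives both implications: if all vertices are Type $1$ or $2$ there is no intertwined strand anywhere, so $\gm$ is non simplifiable; and if some vertex is Type $3$ then one of its strands is a genuine surgery that is intertwined, so $\gm$ is simplifiable.

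The one point demanding real care is the $1$-simple (loop) case, where $a_{i+2}=\overline{a_i}$, so $C(x)=x$ and the corresponding surgery degenerates and is excluded from the definition of simplification. I would dispose of it by observing that such a loop forces the two contiguous triples $a_2a_1\overline{a_2}$ and $a_4a_3\overline{a_4}$ in the word, whose induced cyclic order on $a_1,a_2,a_3,a_4$ is always the reverse $\mu$-order; hence a vertex carrying a loop strand is automatically of Type $2$. This shows simultaneously that such vertices are non simplifiable (consistent with the statement), that a Type $3$ vertex never carries a loop strand (so the intertwined strand found above is always a legitimate, non-$1$-simple surgery), and that the unique vertex of $\gm_{\mathbb{T}}$ is of Type $2$. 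The delicate bookkeeping is making the finite check robust against the within-block adjacencies, which is exactly why I would run it on the explicit eight-letter placement rather than at the coarser level of the blocks alone.
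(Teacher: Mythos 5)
Your proof is correct and follows essentially the same route as the paper: both arguments localise simplifiability to the two strands at each vertex via $C=\gamma\alpha\gamma$ and then test intertwinedness of the pair $\{x,C(x)\}$ against the cyclic order in which the four corners of $P_{\gm}$ at that vertex occur in the gluing word, which is precisely the datum recorded by the Type. You are in fact more thorough in two places the paper passes over quickly: you check all four Type~3 cyclic orders rather than the single representative $t<x<z<y$ (the paper's implicit claim that the others reduce to this one ``up to rotation and change of origin'' needs exactly the symmetry bookkeeping you supply), and you treat the degenerate case of a vertex carrying a $1$-simple loop, where $C(x)=x$ and the paper's generic eight-letter patterns do not literally apply --- your observation that such a vertex is forced to be of Type~2 closes that gap and is consistent with the paper's later remark on the necklace. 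One cosmetic point: your identities $\gamma(a_i)=\overline{a_{i+1}}$ and $C(a_i)=\overline{a_{i+2}}$ correspond to reading $\alpha\mu$ as $\alpha\circ\mu$, whereas the paper's explicit corners $\overline{a_i}\,a_{i+1}$ (e.g.\ $\bar z t$, $\bar t x$, \dots\ in its Case~1) come from the opposite composition order; the two readings are conjugate by the barring involution, under which intertwinedness, surgeries and Types are all preserved, so nothing in your argument is affected.
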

\begin{proof}
All we have to do is to write the possible gluing patterns of $\gm$ by figuring out the order of the edges around a vertex and look when consecutive edges are intertwined or not.

\textbf{Case 1:} If $v$ is a vertex of Type ~1, then a gluing pattern for $\gm$ is given by: $$W_{\gm}=w_1\bar{z}tw_2\bar{t}xw_3\bar{x}yw_4\bar{y}z.$$
Therefore, one cheks that $\bar{x}$ and $C(\bar{x})=z$ are not intertwined; so are $\bar{t}$ and $C(\bar{t})=~y$. Hence, no simplification is possible around $v$.

\textbf{Case 2 :} If $v$ is a vertex of Type ~2, the gluing pattern for $\gm$ is $$W_{\gm}=w_1\bar{z}tw_2\bar{y}zw_3\bar{x}yw_4\bar{t}x.$$
Then, $\bar{x}$ and $C(\bar{x})=z$ are not intertwined; so are $\bar{t}$ and $C(\bar{t})=y$. Again, no simplification is possible around $v$ in this case.

\textbf{Case 3:} If $v$ is a vertex of Type ~3, then 
$$W_{\gm}=w_1\bar{z}tw_2\bar{t}xw_3\bar{y}zw_4\bar{x}y.$$
Here, $\bar{t}$ and $C(\bar{t})=y$ are intertwined and a simplification is possible.

So $\gm$ is non simplifiable if and only all is vertices are of Type ~1 or Type ~2. 
\end{proof}
\end{paragraph}
\begin{paragraph}{Number of vertices of Type ~1 and 2 in a non simplifiable one-faced collection:} In \cite{Chap}, G. Chapuy has defined a notion which catches the topology of a unicellular map: \textit{trisection}. We recall one of his results about trisection.

Let $G$ be an unicellular map and $(H,\alpha, \mu, \gamma)$ the permutation associated to $G$. Let $v$ be a degree $d$ vertex of $G$ defined by a cycle $(x_1 x_2...x_d)$ of ~$\mu$, with $x_1=\min\{x_1,..., x_d\}$ relatively to an order on $H$.

If $x_i>x_{i+1}$ we say that we have a \textit{down-step}. Since $x_1=\min\{x_1,..., x_d\}$, one has $x_d>x_1$; the other down-steps around $v$ are called \textit{non trivial}.  
\begin{definition}[G. Chapuy]
A \textit{trisection} is a down-step which is not a trivial one. 
\end{definition}
\begin{lemma}[The trisection lemma; G. Chapuy \cite{Chap}] Let $G$ be unicellular map on a genus $g$ surface. Then $G$ has exactly $2g$ trisections.  
\end{lemma}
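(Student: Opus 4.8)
The plan is to prove the trisection lemma by an Euler-characteristic/induction argument that tracks how trisections behave under a simple surgery on the cellular structure. First I would set up the combinatorial bookkeeping: for a unicellular map $G$ with permutation data $(H,\alpha,\mu,\gamma)$, fix the order on $H$ induced by $\gamma$ from some origin, and count trisections vertex-by-vertex as non-trivial down-steps. The key numerical observation is that the total number of down-steps around all vertices relates to $|H|$, while the number of \emph{trivial} down-steps equals the number of vertices $V$ (one per vertex, coming from the wrap-around $x_d>x_1$). So the number of trisections is (total down-steps) $-\,V$, and the whole problem reduces to computing the total number of down-steps in terms of $V$ and the genus.

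The cleanest route is induction on the genus $g$ via the inverse of the connected-sum/surgery operation already developed in the paper. For the base case $g=0$ (a planar unicellular map, i.e.\ a tree) one checks directly that there are no non-trivial down-steps, giving $0=2\cdot 0$ trisections. For the inductive step I would locate a trisection on $G$ and perform the surgery that removes a handle: cutting the map along an appropriate arc produces a unicellular map $G'$ on a genus $g-1$ surface. The heart of the argument is a local analysis showing that this surgery decreases the number of trisections by exactly $2$. Concretely, one writes a gluing pattern exposing the chosen trisection, applies the cut-and-paste already justified in Lemma~\ref{lemsurg}, and compares the down-step counts of $G$ and $G'$ letter by letter around the two affected vertices; all down-steps away from the surgery locus are preserved because the cyclic order is only rearranged in a controlled way.

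The main obstacle I anticipate is the local count: verifying that exactly two trisections disappear (and none are spuriously created elsewhere) under the handle-removing surgery. This requires care because changing the origin or the relative order of two blocks $w_i$ in the gluing pattern can shift which down-steps are trivial versus non-trivial at \emph{other} vertices, not just the one being operated on. I would handle this by choosing the surgery arc so that the operation is genuinely local, i.e.\ it alters the face structure only in a neighborhood of one vertex and its image, and by arguing that the global order on $H$ restricts to the same relative order on the unaffected edges before and after. One must also confirm that the resulting $G'$ is again unicellular, which is precisely what the intertwined condition in Lemma~\ref{lemsurg} guarantees, so the induction stays inside the class of unicellular maps.

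Since the statement is attributed to Chapuy~\cite{Chap}, an equally acceptable plan is simply to cite that reference and reproduce only the normalization we need; but the self-contained inductive proof above is the natural one given the surgery machinery developed earlier in the paper. In either case the final formula $2g$ is forced by matching the base case $g=0$ with the decrement-by-$2$ step, so the bulk of the real work is the local down-step analysis rather than any global computation.
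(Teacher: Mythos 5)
The paper offers no proof of this lemma at all: it is quoted from Chapuy, so your fallback option (cite \cite{Chap} and fix the normalization) is exactly what the author does. Your main plan, a self-contained induction on the genus, has two concrete gaps. First, the operation you propose to drive the induction is not available: the surgery $\sigma_{x,y}$ of Lemma~\ref{lemsurg} is genus-preserving --- it permutes the blocks $w_1$ and $w_3$ of the polygon $P_{\gm}$ and returns a unicellular map on the \emph{same} surface --- so it cannot ``remove a handle.'' The genus-reducing move you need is Chapuy's vertex-slicing (or an inverse connected sum), and detecting that an arbitrary unicellular map admits such a move is not routine: in this paper, bringing a collection into a form where a torus summand can be split off is the content of all of Section~\ref{sect4} and Proposition~\ref{prop}, and only for quadrivalent collections. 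Second, your key locality claim --- that all down-steps away from the surgery locus are preserved --- is false for the operations at hand. Exchanging $w_1$ and $w_3$ reorders the half-edges globally, and any vertex with half-edges in both blocks can change its down-step pattern; Lemmas~\ref{type2neigh}, \ref{tictac} and \ref{idealstep} are entirely devoted to how a single surgery alters the type (i.e.\ the down-step structure) of vertices \emph{other} than the ones operated on. Showing that a genus-reducing slice changes the trisection count by exactly $2$ is the substantive content of Chapuy's bijection, not a bookkeeping check.

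Two smaller points. Your normalization ``trivial down-steps $=V$'' fails at degree-one vertices (a vertex of degree one has no down-step at all), and your base case --- plane trees --- consists largely of such vertices; this is repairable but must be said, since the lemma concerns arbitrary unicellular maps, not just quadrivalent ones. More importantly, the induction is not the natural route: the lemma admits a short direct count. Label the half-edges $1,\dots,2E$ in face order, so that $\gamma$ is the cyclic successor and $\mu=\alpha\gamma$; then the total number of down-steps is $\#\{a:\alpha(\gamma(a))<a\}$, which after the substitution $b=\gamma(a)$ compares directly with $\#\{b:\alpha(b)<b\}=E$ (the discrepancies being exactly the leaves and the wrap-around term) and yields $E-L+1$ down-steps, where $L$ is the number of degree-one vertices. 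Subtracting the $V-L$ trivial down-steps gives $E-V+1=2g$ by Euler's formula. If you want a self-contained argument, reproduce that computation rather than an induction on the genus.
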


Applying the trisection lemma to one-faced collections, we get:
\begin{corollary}
A non simplifiable one-faced collection on $\sg_g$ has $g$ vertices of Type ~2 and $g-1$ vertices of Type ~1.
\end{corollary}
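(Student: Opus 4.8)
The plan is to combine three facts already established in the excerpt: the vertex count $V = 2g-1$, the characterization (preceding lemma) that a non simplifiable one-faced collection has only Type 1 and Type 2 vertices, and Chapuy's trisection lemma, which asserts that a unicellular map of genus $g$ carries exactly $2g$ trisections. The strategy is to compute how many trisections each individual vertex contributes according to its type, and then solve a small linear system in the numbers $n_1, n_2$ of Type 1 and Type 2 vertices.

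First I would fix a gluing pattern for $\gm$ and the induced order on $H$. Every vertex of a one-faced collection is $4$-valent, so each is given by a $4$-cycle $(t\,x\,y\,z)$ of $\mu$ with $t = \min\{t,x,y,z\}$. By definition a trisection at such a vertex is a non-trivial down-step, that is, a cyclically consecutive pair $x_i > x_{i+1}$ other than the forced descent back to the minimum ($z > t$ in this labelling).

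Next I would carry out the two local counts using the descriptions in Figure~\ref{type}. For a Type 1 vertex ($t<x<y<z$) the cyclic sequence $t,x,y,z$ is strictly increasing, so its only down-step is the trivial one $z>t$; hence a Type 1 vertex carries $0$ trisections. For a Type 2 vertex ($t<z<y<x$) the transitions $x>y$ and $y>z$ are both non-trivial down-steps, while $t<x$ is an ascent and $z>t$ is the trivial descent; hence a Type 2 vertex carries exactly $2$ trisections. Assembling these, and writing $n_1,n_2$ for the numbers of Type 1 and Type 2 vertices, the earlier lemma gives $n_1 + n_2 = V = 2g-1$, while summing the local trisection counts and invoking the trisection lemma gives $2n_2 = 2g$. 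Therefore $n_2 = g$ and $n_1 = g-1$, as claimed.

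The only delicate point — the main obstacle — is getting the down-step bookkeeping exactly right: one must respect the convention $t=\min\{t,x,y,z\}$, treat the wrap-around descent $z \to t$ as the single trivial down-step excluded from the trisection count, and check that the Type-by-Type orderings I use agree with the angular pictures in Figure~\ref{type}. Once the local counts $0$ and $2$ are pinned down, the rest is immediate linear algebra.
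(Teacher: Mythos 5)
Your proposal is correct and follows exactly the paper's own argument: count trisections locally (a Type~1 vertex contributes $0$, a Type~2 vertex contributes $2$), apply Chapuy's trisection lemma to get $2N_2 = 2g$, and combine with $N_1 + N_2 = V = 2g-1$. The only difference is that you spell out the down-step bookkeeping explicitly where the paper simply points to Figure~\ref{type}; your local counts are right.
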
 
\begin{proof}
A vertex of Type ~2 (respectively a vertex of Type ~1) has two trisections (respectively zero trisection) (see Figure ~\ref{type}). If $N_i$ is the number of vertices of Type ~ i (i=1,2), by the trisection lemma we have $2N_2=2g$, so $N_2=g$.\\
Since $V_{\gm}=N_1+N_2=2g-1$, it follows that $N_1=g-1$.
\end{proof}
\end{paragraph}
\begin{paragraph}{Repartition of vertices on a non simplifiable collection:} Now, we show that using surgeries, we can re-order the vertices of a non simplifiable one-faced collection.
\begin{definition} 
Let $G$ be a graph. Two vertices are \textit{adjacent} if they share an edge. 
\end{definition}

If $v_1$ and $v_2$ are two vertices represented by the cycles $(abcd)$ and $(efgh)$, respectively, they are adjacent if and only if there exist $x\in\{a,b,c,d\}$ such that $\bar{x}\in\{e,f,g,h\}$.\vspace{0.2cm}
 
We now show that some configurations of vertices "hide" simplifications; that is from those configurations we can create new simplifications after a suitable surgery  without killing the old ones.
\begin{lemma}\label{type2neigh}
Let $\gm$ be a non simplifiable one-faced collection. If $\gm$ contains two vertices of Type ~2 which are adjacent, then there is a sequence of surgeries $\gm=\gm_0\longrightarrow\gm_1\longrightarrow...\longrightarrow\gm_n$ from $\gm$ to ~$\gm_n$ such that $\gm_n$ is non simplifiable and $S_{\gm}<S_{\gm_n}$.
\end{lemma}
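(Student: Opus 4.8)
The plan is to increase $S_\gm$ in two stages: first a single \emph{unlocking} surgery that turns $\gm$ into a one-faced collection $\gm_1=\sigma_{x,y}(\gm)$ which is again simplifiable, and then a maximal chain of simplifications $\gm_1\longrightarrow\cdots\longrightarrow\gm_n$ ending at a non-simplifiable collection. Since simplifications are surgeries that strictly raise the number of 1-simple curves (by the simplification lemma) and that count is bounded by $g$, such a chain exists and terminates; the whole point will be to arrange that the unlocking step does not destroy 1-simple curves, so that the net effect is $S_{\gm_n}>S_\gm$.

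First I would fix a gluing pattern for $\gm$ adapted to the two adjacent Type~2 vertices $v_1,v_2$. Writing each of them in the Type~2 normal form $w\,\bar z\,t\,w'\,\bar y\,z\,w''\,\bar x\,y\,w'''\,\bar t\,x$ used earlier, and using that $v_1,v_2$ share an oriented edge (so some edge of $v_1$ equals the reverse of some edge of $v_2$), I obtain an explicit word $W_\gm$ in which the eight angular sectors of $v_1\cup v_2$ and the identification of the shared edge are simultaneously visible. Because \emph{adjacent} permits several positions for the shared edge among the four edges of each vertex, this produces a short list of cases, reduced using the symmetry $\sigma_{x,y}=\sigma_{\bar x,\bar y}$ and the cyclic/relabeling freedom already exploited in Section~\ref{sect3}.

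Next, in each case I would pick a pair of \emph{intertwined} oriented edges $x,y$ lying on $v_1\cup v_2$ and apply the surgery lemma (Lemma~\ref{lemsurg}). Reading off the resulting pattern through the rewriting rule $w_1xw_2\bar xw_3yw_4\bar y\mapsto w_3Xw_2\bar Xw_1Yw_4\bar Y$, I would verify two things: (a) one vertex of $\gm_1$ has become Type~3, so that $\gm_1$ admits a simplification and hence $S_{\gm_1}$ will be strictly increased by the next stage; and (b) the surgery preserves every pre-existing 1-simple curve, i.e.\ every side $w$ with $\gamma\alpha\gamma(w)=w$ in $\gm$ still satisfies the same relation in $\gm_1$, so that $S_{\gm_1}\ge S_\gm$. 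Feeding $\gm_1$ into a maximal simplification chain then yields a non-simplifiable $\gm_n$ with $S_{\gm_n}\ge S_{\gm_1}+1\ge S_\gm+1$, as required.

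I expect step~(b) to be the main obstacle. A single surgery transposes the large blocks $w_1,w_3$ of the gluing pattern and therefore reshuffles the global $\gamma$-order, so a priori it could disturb a 1-simple curve sitting far from $v_1,v_2$. The crux is to choose $x,y$ to be edges of the two Type~2 vertices in such a way that the blocks actually moved are the \emph{inert} portions of the word, allowing me to check directly that $\gamma\alpha\gamma(w)=w$ is preserved for every 1-simple side $w$ and that no new intertwining between a 1-simple side and its successor is created. Once the right surgery is isolated in each configuration, properties (a) and (b) both reduce to short, mechanical verifications on the displayed words.
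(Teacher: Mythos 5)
Your proposal is correct and follows essentially the same route as the paper: a single ``unlocking'' surgery supported on the two adjacent Type~2 vertices that produces a Type~3 vertex (hence a simplifiable collection) without touching any existing 1-simple curve, followed by a maximal chain of simplifications, each of which strictly increases $S$. The paper carries out the step you defer to ``mechanical verification'' by deriving the forced cyclic order $a<b<c<g<\bar{a}<f<\bar{g}<e<\bar{b}<\bar{f}<d<\bar{e}<\bar{c}<\bar{d}$ around the two vertices and performing the explicit surgery $\sigma_{b,\bar{g}}$, so your outline matches it point for point.
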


\begin{proof} Let $v_1$ and $v_2$ be two adjacent vertices of Type 2 defined by the cycles $(b\bar{f}\bar{g}\bar{a})$ and $(c\bar{d}\bar{e}\bar{b})$, respectively (see Figure ~\ref{hsimple}). 
\begin{figure}[htbp]
\begin{center}
\includegraphics[scale=0.2]{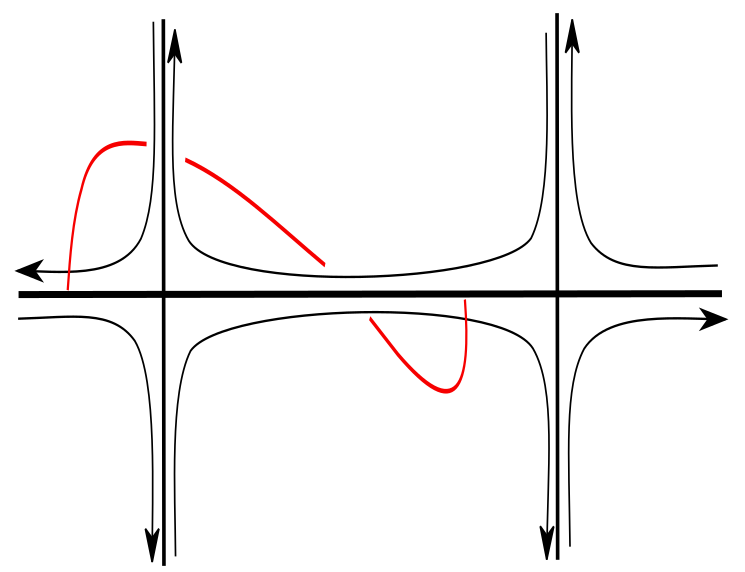}
\put(-40,2){$a$}
\put(-45.7,2){$\bar{a}$}
\put(-27,14){$b$}
\put(-27,24){$\bar{b}$}
\put(-17,2){$c$}
\put(-12,2){$\bar{c}$}
\put(-4,24){$d$}
\put(-4,13){$\bar{d}$}
\put(-17,38){$e$}
\put(-11.5,38){$\bar{e}$}
\put(-45.7,38){$f$}
\put(-39.5,38){$\bar{f}$}
\put(-51,14){$g$}
\put(-51,24){$\bar{g}$}
\includegraphics[scale=0.2]{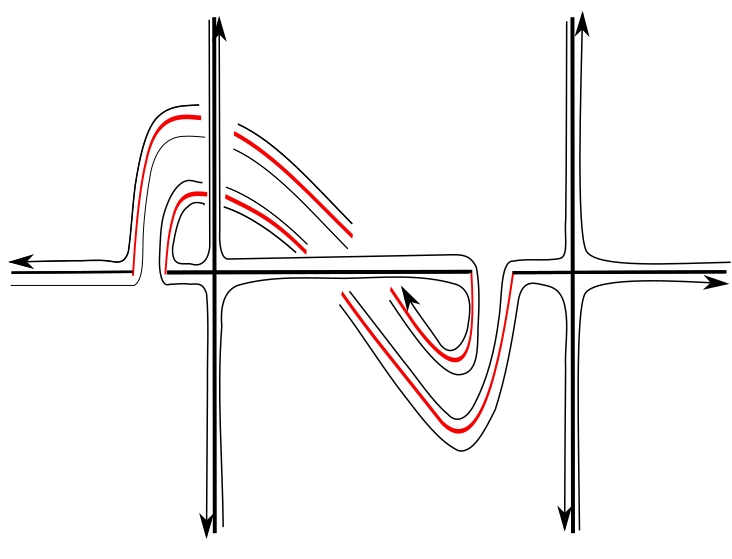}
\put(-35,17){\small{1}}
\put(-35,22){\small{4}}
\put(-39,17){\small{2}}
\put(-39,22){\small{3}}
\put(-14,17){\small{2}}
\put(-14,22){\small{1}}
\put(-9,17){\small{4}}
\put(-9,22){\small{3}}
\caption{Surgery which creates new simplifications. On the left figure, $a<g<f<e<d<\bar{c}$ is the order by which we pass through the eight sectors. On the figure on the right, we focus on the angular order around $v_1$ and $v_2$. The order on the figure on the right comes from that of the figure on the left. At each time we leave the local configuration on the figure on the right, we come back on it in the same way like in the figure on the left.}
\label{hsimple}
\end{center}
\end{figure}

Let us fix an oriented edge as an origin, so that $$a=\min\{a,  \bar{c}, d, e, f, g\};$$ that is the first time we enter in the local configuration is by the oriented edge $a$. Then we have the following order: $$a<b<c<g<\bar{a}<f<\bar{g}<e<\bar{b}<\bar{f}<d<\bar{e}<\bar{c}<\bar{d}.$$ 
Otherwise, it would contradict the fact that the two vertices are of Type ~2. A gluing pattern for $\gm$ is given by:
$$W_{\gm}=w_1a\bm{b}cw_2\bm{g}\bar{a}w_3f\bm{\bar{g}}w_4e\bm{\bar{b}}\bar{f}w_5d\bar{e}w_6\bar{c}\bar{d}.$$

The oriented edges $b$ and $\bar{g}$  are intertwined, so we can define $\gm':=\sigma_{b,\bar{g}}(\gm)$. By Lemma \ref{lemsurg}, a gluing pattern for $\gm'$ is:
$$W_{\gm'}=\bar{a}w_3f\bm{B}cw_2\bm{G}\bar{f}w_5d\bar{e}w_6\bar{c}\bar{d}w_1a\bm{\bar{G}}w_4e\bm{\bar{B}}.$$

The cycles $(\bar{G} \bar{f} B \bar{a})$ and $(\bar{B} c \bar{d} \bar{e})$ define the two vertices of $\gm'$ in Figure ~\ref{hsimple} and the orders around these two vertices are: \[\bar{G}<\bar{a}<B<\bar{f}; \quad \bar{B}<c<\bar{e}<\bar{d}.\]

Therefore, the vertex $(\bar{B}, c, \bar{d}, \bar{e})$ is a vertex of Type ~3 and it implies that $\gm'$ is simplifiable. Indeed the operation on Figure ~\ref{hsimple} does not touch any 1-simple curve of $\gm$ and each simplification increases strictly the number of 1-simple. Let $\gm_n$ be a non simplifiable collection obtained after finitely many simplifications on ~$\gm'$; so $S_\gm<S_{\gm_n}$.    
\end{proof}

Let $v_1$ and $v_2$ be two vertices of Type ~1 and Type ~2 defined by the cycles $(\bar{c} d e f)$ and $(g\bar{a}bc)$, respectively, such that $v_1$ and $v_2$ are adjacent. The local configuration in this case is depicted on Figure ~\ref{figsept} and we assume that $$a=\min\{a, \bar{b}, \bar{d}, \bar{e}, \bar{f}, \bar{g}\}.$$

\begin{figure}[htbp]
\begin{center}
\includegraphics[width=6cm]{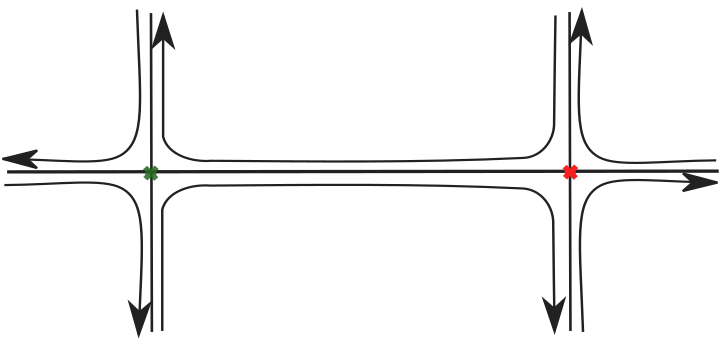}
\put(-2,17){\small{$a$}}
\put(-2,11){\small{$\bar{a}$}}
\put(-11.1,25){\small{$b$}}
\put(-16,25){\small{$\bar{b}$}}
\put(-32,17){\small{$c$}}
\put(-32,11){\small{$\bar{c}$}}
\put(-45,25){\small{$d$}}
\put(-51,25){\small{$\bar{d}$}}
\put(-58,17){\small{$e$}}
\put(-58,11){\small{$\bar{e}$}}
\put(-52,2){\small{$f$}}
\put(-45.7,2){\small{$\bar{f}$}}
\put(-17.3,2){\small{$g$}}
\put(-11.1,2){\small{$\bar{g}$}}
\caption{}
\label{figsept}
\end{center}
\end{figure}
 
\begin{lemma}\label{tictac}
If $\min\{\bar{b}, \bar{d}, \bar{e}, \bar{f}, \bar{g}\}\neq\bar{g}$, then there is a sequence  $\gm=\gm_0\longrightarrow\gm_1\longrightarrow...\longrightarrow\gm_n$ from $\gm$ to $\gm_n$ such that $\gm_n$ is non simplifiable and $S_{\gm}<S_{\gm_n}$.
\end{lemma}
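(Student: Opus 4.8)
The plan is to mimic the proof of Lemma~\ref{type2neigh}: reduce the data to an explicit gluing pattern, exhibit one surgery that uncovers a vertex of Type~3, and then close with the same simplification argument.

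First I would convert the hypotheses into a normal form. Writing $\gamma=\alpha\mu$ and reading it off the two cycles $v_1=(\bar{c}\,d\,e\,f)$ and $v_2=(g\,\bar{a}\,b\,c)$, the fourteen oriented edges split into six runs of consecutive letters of the face word:
\[
g\,a,\qquad \bar{a}\,\bar{b},\qquad b\,\bar{c}\,\bar{d},\qquad f\,c\,\bar{g},\qquad d\,\bar{e},\qquad e\,\bar{f}.
\]
The normalisation $a=\min\{a,\bar{b},\bar{d},\bar{e},\bar{f},\bar{g}\}$ puts the run $g\,a$ first; the Type~2 condition at $v_2$ gives $g<c<b<\bar{a}$, forcing the run $f\,c\,\bar{g}$ before $b\,\bar{c}\,\bar{d}$ and hence $f<\bar{c}$; this together with the Type~1 condition at $v_1$ forces $t_1:=\min\{\bar{c},d,e,f\}\neq\bar{c}$, i.e. $t_1\in\{d,e,f\}$. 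A direct check shows that $t_1=f$ happens exactly when $\min\{\bar{b},\bar{d},\bar{e},\bar{f},\bar{g}\}=\bar{g}$, so the hypothesis is equivalent to $t_1\in\{d,e\}$ and leaves only two block orders. For $t_1=d$ one obtains
\[
W_{\gm}=g\,a\,w_1\,d\,\bar{e}\,w_2\,e\,\bar{f}\,w_3\,f\,c\,\bar{g}\,w_4\,b\,\bar{c}\,\bar{d}\,w_5\,\bar{a}\,\bar{b}\,w_6 ,
\]
and the case $t_1=e$ is entirely parallel.

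Next I would surger the smallest incoming edge $m:=\min\{\bar{b},\bar{d},\bar{e},\bar{f},\bar{g}\}$ against $\bar{g}$; the hypothesis $m\neq\bar{g}$ is exactly what makes this a pair of two distinct intertwined edges. When $t_1=d$ we have $m=\bar{e}$ (and $m=\bar{f}$ when $t_1=e$), and putting $W_{\gm}$ in the shape required by Lemma~\ref{lemsurg} and applying it (with $\bar{e}\mapsto X$, $e\mapsto\bar{X}$, $\bar{g}\mapsto Y$, $g\mapsto\bar{Y}$) yields a pattern for $\gm'=\sigma_{\bar{e},\bar{g}}(\gm)$ in which $(c\,\bar{X}\,\bar{a}\,b)$ is a genuine vertex, with the four edges occurring in the order $c<\bar{X}<b<\bar{a}$. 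Since this reads as $t<x<z<y$, the vertex is of Type~3. What makes the conclusion robust is that $v_1$ and $v_2$ re-glue only to each other along this surgery: the half-edges $X,Y$ (the images of $\bar{e},\bar{g}$) are absorbed into the filler words, so the new Type~3 vertex does not depend on the unknown $w_i$.

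Finally, by the lemma characterising non-simplifiable collections through their vertex types, $\gm'$ is now simplifiable. As in Lemma~\ref{type2neigh}, the surgery $\sigma_{m,\bar{g}}$ can be realised by an arc touching no $1$-simple curve of $\gm$, so $S_{\gm'}\ge S_{\gm}$; simplifying until no simplification remains produces a non-simplifiable $\gm_n$, and because each simplification increases the number of $1$-simple curves by one (and $S$ is bounded by the genus, so the process terminates) we get $S_{\gm_n}\ge S_{\gm'}+1>S_{\gm}$. The resulting chain $\gm=\gm_0\to\gm_1=\gm'\to\dots\to\gm_n$ is a sequence of surgeries, as required. The step I expect to be the real obstacle is the middle one: carrying out the $w_1\leftrightarrow w_3$ swap of Lemma~\ref{lemsurg} and checking, in both surviving cases $t_1=d$ and $t_1=e$, that the images of $c,\bar{a},b$ together with the image of $e$ (respectively $f$) land in an order that is neither increasing nor decreasing, as well as verifying that $m$ and $\bar{g}$ are intertwined. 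It is this bookkeeping — tracking which half-edges stay in the two reconstructed vertices and which are pushed into the rest of the surface — rather than any conceptual difficulty, that the hypothesis $\min\{\bar{b},\bar{d},\bar{e},\bar{f},\bar{g}\}\neq\bar{g}$ is designed to keep under control.
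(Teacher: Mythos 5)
Your proof is correct and follows the same overall architecture as the paper's: normalize the local order using the two Type conditions, rule out the excluded case, exhibit one explicit surgery whose effect on the gluing pattern (via Lemma~\ref{lemsurg}) produces a vertex of Type~3, and then conclude by running simplifications, which strictly increase $S$ while the initial surgery does not destroy any $1$-simple curve. Two differences are worth recording. First, a convention point: you read the cycles $(\bar{c}\,d\,e\,f)$ and $(g\,\bar{a}\,b\,c)$ literally, so your runs are $g\,a$, $\bar{a}\,\bar{b}$, $b\,\bar{c}\,\bar{d}$, $f\,c\,\bar{g}$, $d\,\bar{e}$, $e\,\bar{f}$ and the six distinguished letters $a,\bar{b},\bar{d},\bar{e},\bar{f},\bar{g}$ are run \emph{exits}; the paper's own patterns (e.g.\ $w_1 a b w_2 \bar{d} e \dots$) correspond to the barred cycles $(c\,\bar{d}\,\bar{e}\,\bar{f})$ and $(a\,\bar{b}\,\bar{c}\,\bar{g})$, where those letters are run \emph{entries}. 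The two set-ups are exchanged by $x\mapsto\bar{x}$, so your non-$\bar{g}$ cases are $\{\bar{e},\bar{f}\}$ where the paper's are $\{\bar{d},\bar{e}\}$; this is harmless but explains why your case split does not match the paper's letter for letter. Second, and more substantively, your surgery is different: the paper always surgers the first letter $a$ against an edge of $v_1$ (namely $\sigma_{a,\bar{e}}$, $\sigma_{a,f}$ or $\sigma_{a,d}$ depending on the sub-case), whereas you surger $m=\min\{\bar{b},\bar{d},\bar{e},\bar{f},\bar{g}\}$ against $\bar{g}$. I checked that in all your cases ($t_1=d$, and both orderings of the last two runs when $t_1=e$) the pair $(m,\bar{g})$ is intertwined and the reconstructed vertex $(c\,\bar{X}\,\bar{a}\,b)$ carries the order $c<\bar{X}<b<\bar{a}$, which is indeed Type~3. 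Your choice is slightly cleaner in that a single surgery formula covers both sub-cases of the $t_1=e$ configuration, where the paper needs two different surgeries; the trade-off is only notational.
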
 
\begin{proof} 
Since $v_2$ is a vertex of Type ~2, $\min\{\bar{b}, \bar{d}, \bar{e}, \bar{f}, \bar{g}\}$ is different from $\bar{b}$ and $\bar{f}$. 

\textbf{Case 1:} If $\min\{\bar{b}, \bar{d}, \bar{e}, \bar{f},  \bar{g}\}=\bar{d}$, the fact that the vertices are of Type ~1 and 2 implies that the local order is either $$a<b<\bar{d}<e<\bar{e}<f<\bar{g}<\bar{a}<\bar{f}<\bar{c}<g<\bar{b}<c<d,$$ or $$a<b<\bar{d}<e<\bar{g}<\bar{a}<\bar{e}<f<\bar{f}<\bar{c}<g<\bar{b}<c<d.$$

\hspace{0.05cm}\textbf{Sub-case 1:} If $a<b<\bar{d}<e<\bar{e}<f<\bar{g}<\bar{a}<\bar{f}<\bar{c}<g<\bar{b}<c<d$, then $$W_{\gm}=w_1\bm{a}bw_2\bar{d}\bm{e}w_3\bm{\bar{e}}fw_4\bar{g}\bm{\bar{a}}w_5\bar{f}\bar{c}gw_6\bar{b}cd$$ is a gluing pattern for $\gm$.

The oriented edges $a$ and $\bar{e}$ are intertwined and a gluing pattern for $\gm':=\sigma_{a,\bar{e}}(\gm)$ is given by \[W_{\gm'}=w_3\bm{A}bw_2\bar{d}\bm{E}w_5\bar{f}\bar{c}gw_6\bar{b}cdw_1\bm{\bar{E}}fw_4\bar{g}\bm{\bar{A}}.\] 
The cycles $(bcg\bar{A})$ and $(Ef\bar{c}d)$ define the two vertices in Figure ~$\ref{figneuf}$. Moreover, $b<g<c<\bar{A}$ and $E<\bar{c}<d<f$ that is they are vertices of Type ~3.\\
\begin{figure}[htbp]
\begin{center}
\includegraphics[width=6cm]{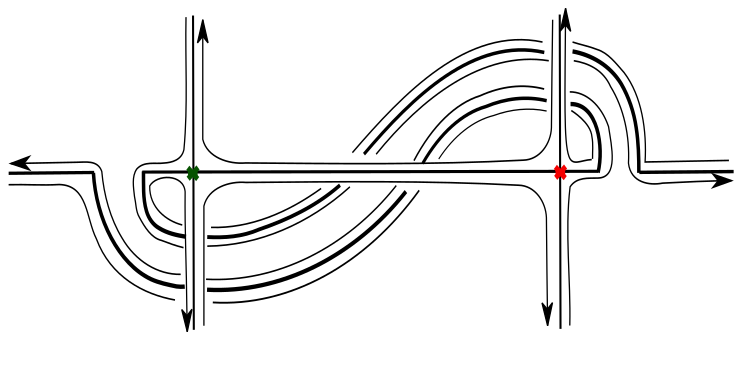}
\put(-47,18.5){\small{1}}
\put(-42.5,13){\small{2}}
\put(-42.5,18.5){\small{3}}
\put(-47,13.5){\small{4}}
\put(-13.5,18.5){\small{1}}
\put(-17.5,12.5){\small{2}}
\put(-17.5,18.5){\small{3}}
\put(-13,13.5){\small{4}}
\caption{}
\label{figneuf}
\end{center}
\end{figure}
\noindent Therefore, $\gm'$ is simplifiable and there is sequence of simplification from $\gm'$ to $\gm_n$ such that  $\gm_n$ is non simplifiable and $N_{\gm_n}>N_{\gm'}=N_{\gm}$. The equality $N_{\gm'}=N_{\gm}$ holds since the surgery in this case does not touch a 1-simple curve.

\hspace{0.05cm}\textbf{Sub-case 2:} If $a<b<\bar{d}<e<\bar{g}<\bar{a}<\bar{e}<f<\bar{f}<\bar{c}<g<\bar{b}<c<d$, then a gluing pattern for $\gm$ is 
$$W_{\gm}=w_1\bm{a}bw_2\bar{d}ew_3\bar{g}\bm{\bar{a}}w_4\bar{e}\bm{f}w_5\bm{\bar{f}}\bar{c}gw_6\bar{b}cd.$$
Here again, the oriented edges $a$ and $f$ are intertwined and a gluing pattern for $\gm':=\sigma_{a,f}(\gm)$ is given by: 
$$W_{\gm'}=w_4\bar{e}\bm{A}bw_2\bar{d}ew_3\bar{g}\bm{\bar{A}}\bar{c}gw_6\bar{b}cdw_1\bm{F}w_5\bm{\bar{F}}.$$
The two vertices in Figure \ref{figdix} are defined by the cycles $(bcg\bar{A})$ and $(A\bar{c}de)$. Moreover, $b<\bar{A}<g<c$ and $A<e<\bar{c}<d$. It follows that the vertex $(A\bar{c}de)$ is a vertex of Type ~3 and therefore, $\gm'$ is simplifiable.
 \begin{figure}[htbp]
\begin{center}
\includegraphics[width=6cm]{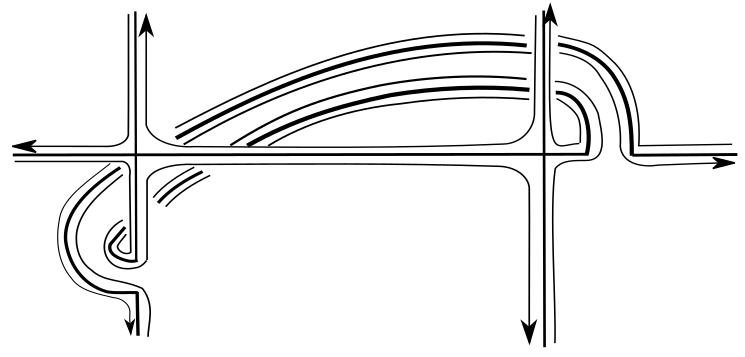}
\put(-15.5,18){\small{1}}
\put(-15,12){\small{2}}
\put(-20,12){\small{3}}
\put(-20,18){\small{4}}
\put(-52,17.5){\small{1}}
\put(-47,13){\small{2}}
\put(-47.5,17.5){\small{3}}
\put(-52,13){\small{4}}
\caption{}
\label{figdix}
\end{center}
\end{figure}

\textbf{Case 2:} if $\min\{\bar{b}, \bar{d}, \bar{e}, \bar{f}, \bar{g}\}=\bar{e}$, then the local order is given by: $$a<b<\bar{e}<f<\bar{g}<\bar{a}<\bar{f}<\bar{c}<\bar{g}<\bar{b}<c<d<\bar{d}<e,$$ 
and a gluing pattern for $\gm$ is given by:
$$W_4=w_1\bm{a}bw_2\bar{e}fw_3\bar{g}\bm{\bar{a}}w_4\bar{f}\bar{c}gw_5\bar{b}c\bm{d}w_6\bm{\bar{d}}e.$$

In this case, $a$ and $d$ are intertwined. A gluing pattern for $\gm':=\sigma_{a,d}(\gm)$ is given by:
$$W_{\gm'}=w_4\bar{f}\bar{c}gw_5\bar{b}c\bm{A}bw_2\bar{e}fw_3\bar{g}\bm{\bar{A}}ew_1\bm{D}w_6\bm{\bar{D}}.$$
The cycles $(\bar{c}Aef)$ and $(g\bar{A}bc)$ represent the two vertices in Figure ~\ref{figonze} and $\bar{c}<A<f<e$. So, the vertex $(\bar{c}Aef)$ is a vertex of Type ~3 and $\gm'$ is simplifiable. 
\begin{figure}[htbp]
\begin{center}
\includegraphics[width=6cm]{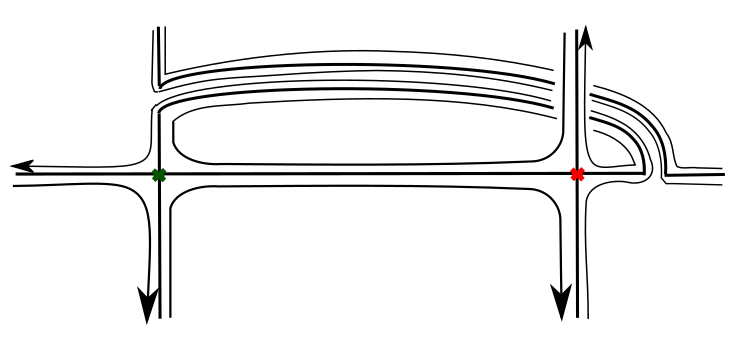}
\put(-11.5,14.7){\small{1}}
\put(-11,10){\small{2}}
\put(-17,9.5){\small{3}}
\put(-17,15.3){\small{4}}
\put(-50,9.5){\small{1}}
\put(-50,15.5){\small{2}}
\put(-46,9.5){\small{3}}
\put(-45,15.5){\small{4}}
\caption{}
\label{figonze}
\end{center}
\end{figure}
\end{proof}

\begin{definition}(see Figure \ref{figsept}) Let $v_1$ and $v_2$ be two adjacent vertices of Type ~1 and 2, respectively. We say that we have a \textit{good order} around $v_1$ and $v_2$ if $\bar{g}=\min\{\bar{b}, \bar{d}, \bar{e}, \bar{f}, \bar{g}\}$. 
\end{definition}
\begin{definition}
A one-faced collection $\gm$ is \textit{almost toral} if:
\begin{itemize}
\item $\gm$ is non simplifiable, 
\item no two vertices of Type ~2 are adjacent,
 \item the local orders around two adjacent vertices of Type ~1 and 2 are good.
 \end{itemize}
\end{definition}
\begin{lemma}\label{almideal}
Let $\gm$ be a non simplifiable one-faced collection. Then there is a sequence of surgeries $\gm_0=\gm\longrightarrow\gm_1...\longrightarrow\gm_n$ such that $\gm_n$ is an almost toral  one-faced collection.
\end{lemma}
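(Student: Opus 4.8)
The plan is to run an iterative procedure controlled by the monovariant $S_{\gm}$, the number of $1$-simple curves of $\gm$, which we have already observed is bounded above by the genus $g$ (distinct $1$-simple curves are pairwise disjoint, and their union would otherwise be a nonempty collection disjoint from the connected $\gm$). The key observation is that a non simplifiable collection fails to be almost toral \emph{precisely} when it contains one of the two forbidden configurations occurring in Lemma~\ref{type2neigh} and Lemma~\ref{tictac}: either two adjacent vertices of Type~2, or a pair of adjacent vertices of Type~1 and Type~2 whose local order is not good.

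First I would make this dichotomy explicit. If $\gm$ is already almost toral, we are done with the empty surgery sequence. Otherwise, by the definition of almost toral, $\gm$ violates one of the last two defining conditions, and hence realizes the hypothesis of exactly one of the two lemmas. Applying the relevant lemma produces a finite sequence of surgeries from $\gm$ to a new non simplifiable collection $\gm'$ satisfying $S_{\gm}<S_{\gm'}$.

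Next I would iterate this construction, setting $\gm_0=\gm$ and, as long as $\gm_i$ is not almost toral, letting $\gm_{i+1}$ be the non simplifiable collection furnished by the applicable lemma, reached from $\gm_i$ by surgeries and with $S_{\gm_i}<S_{\gm_{i+1}}$. Concatenating these finite surgery sequences yields a single surgery sequence starting at $\gm$. Termination then follows from the monovariant: the integers $S_{\gm_0}<S_{\gm_1}<\cdots$ are strictly increasing and bounded above by $g$, so the procedure halts after at most $g$ applications, necessarily at a collection $\gm_n$ to which neither lemma applies---that is, an almost toral one.

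The steps invoking the two lemmas do all the real work, so I expect no hard computation here. The only point requiring care is that each step genuinely preserves non-simplifiability and strictly raises $S$, which is exactly what Lemmas~\ref{type2neigh} and~\ref{tictac} assert. The mild subtlety to flag is that applying one lemma may create a fresh forbidden configuration of the \emph{other} type; this is harmless, however, since $S$ strictly increases at every stage regardless of which configuration is treated, and boundedness of $S$ still forces the process to terminate at an almost toral collection.
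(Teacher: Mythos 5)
Your proposal is correct and follows essentially the same route as the paper: the paper's proof likewise invokes Lemma~\ref{type2neigh} and Lemma~\ref{tictac} to strictly increase the number $S_{\gm}$ of 1-simple curves whenever the collection is non simplifiable but not almost toral, and concludes by the bound $S_{\gm}\leq g$. Your write-up merely makes explicit the dichotomy and the termination argument that the paper leaves implicit.
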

\begin{proof}
If $\gm$ is not almost toral, by Lemma ~\ref{type2neigh} and Lemma ~\ref{tictac} there is a sequence of surgeries $\gm_0=\gm\longrightarrow\gm_1\longrightarrow...\longrightarrow \gm_n$ such that $\gm_n$ is non simplifiable and $S_{\gm}<S_{\gm_n}$, i.e, we create new 1-simple curves after some suitable surgeries. Since the number of 1-simple curves is bounded by the genus, those operations stabilize to an almost toral one-faced collection.
\end{proof}
Now, we are going to improve the configuration of the vertices of an almost toral one-faced collection. 

\begin{lemma}\label{idealstep}
Let $\gm$ be an almost toral one-faced collection, $v_1$ and $v_2$ be two adjacent vertices of Type ~1 and Type ~2, respectively; with $a=\min\{a, \bar{b}, \bar{d}, \bar{e}, \bar{f}, \bar{g}\}$ (see Figure ~\ref{figsept}). 

If $x:=\min\{\bar{b}, \bar{d}, \bar{e}, \bar{f}\}$ is adjacent to a vertex of Type ~2, then there is a sequence of surgeries $\gm_0=\gm\longrightarrow\gm_1\longrightarrow...\longrightarrow\gm_n$ such that $\gm_n$ is almost toral and $N_{\gm}<N_{\gm_n}$.
\end{lemma}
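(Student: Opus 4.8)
The plan is to follow the combinatorial template established in Lemmas~\ref{type2neigh} and~\ref{tictac}: I will exhibit a short sequence of surgeries that produces a vertex of Type~3 (equivalently, a simplifiable collection) while leaving every existing $1$-simple curve untouched, then simplify and re-establish almost torality via Lemma~\ref{almideal}. The quantity $N_{\gm}$, the number of $1$-simple curves, is the monovariant: each simplification raises it by one, and Lemma~\ref{almideal} only ever applies simplification-creating moves, so it cannot decrease $N_{\gm}$. Hence the almost toral collection $\gm_n$ reached at the end will satisfy $N_{\gm_n}>N_{\gm}$.

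First I would record the structure forced by the hypothesis. The oriented edge $x=\min\{\bar b,\bar d,\bar e,\bar f\}$ emanates, by assumption, from a Type~2 vertex $v_3$. If $x=\bar b$, then since $b$ is an edge of $v_2$, the vertices $v_2$ and $v_3$ would be two adjacent Type~2 vertices, contradicting almost torality; hence $x\in\{\bar d,\bar e,\bar f\}$. Then $\bar x\in\{d,e,f\}$ is one of the oriented edges of $v_1$, and inspecting the cycles shows $v_3\neq v_1,v_2$. Thus we land in the configuration of a single Type~1 vertex $v_1$ adjacent simultaneously to two distinct Type~2 vertices $v_2$ and $v_3$, along the edges $c$ and $x$ respectively.

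Next I would write a gluing pattern for $\gm$ carrying all three vertices. Almost torality forces the order around the pair $(v_1,v_2)$ to be good (this is exactly the normalization $a=\min\{a,\bar b,\bar d,\bar e,\bar f,\bar g\}$ together with $\bar g=\min\{\bar b,\bar d,\bar e,\bar f,\bar g\}$ of Figure~\ref{figsept}) and likewise, after the appropriate relabeling, forces the order around the pair $(v_1,v_3)$ to be good. These two good-order constraints, together with the minimality of $a$ and the definition of $x$ as the second-smallest free edge, pin the cyclic order of the relevant letters down to finitely many cases, indexed by which of $\bar d,\bar e,\bar f$ equals $x$ and by how the remaining edges of $v_3$ interleave with those of $v_1$ and $v_2$. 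In each case I would exhibit one explicit intertwined pair and apply Lemma~\ref{lemsurg} to read off the gluing pattern of the resulting collection $\gm'$, the choice being made so that one of the two vertices created by the surgery reads off as a Type~3 vertex, i.e.\ so that $\gm'$ is simplifiable. A mechanism I expect to work uniformly is that a single preparatory surgery (which leaves $N_{\gm}$ unchanged since it avoids every $1$-simple curve) turns the good order around one of the two adjacent Type~1/Type~2 pairs into a bad one, after which Lemma~\ref{tictac} applies verbatim and supplies the simplification.

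The main obstacle will be the bookkeeping of this case analysis: verifying in each admissible ordering that the chosen pair is genuinely intertwined, that the surgery really creates a Type~3 vertex rather than merely permuting the two Type~2 vertices, and crucially that the surgery does not run along a $1$-simple curve, so that the subsequent simplifications strictly increase $N_{\gm}$. Once a simplifiable collection $\gm'$ with $N_{\gm'}\ge N_{\gm}$ and a genuine Type~3 vertex is produced, simplifying it to a non simplifiable collection strictly increases the count of $1$-simple curves, and a final application of Lemma~\ref{almideal} returns to an almost toral $\gm_n$ with $N_{\gm}<N_{\gm_n}$, as required.
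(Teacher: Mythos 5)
Your overall strategy is the paper's: perform a surgery that avoids the $1$-simple curves and creates a Type~3 vertex, simplify, and invoke Lemma~\ref{almideal}, with the number of $1$-simple curves as the monovariant. But the proof you have written stops exactly where the mathematical content of the lemma begins. The entire point of the statement is to name the intertwined pair and verify that the resulting vertex is of Type~3, and you defer both to an unexecuted case analysis (``I would exhibit one explicit intertwined pair \ldots the choice being made so that \ldots'') together with a mechanism you only ``expect to work uniformly.'' In the paper no case analysis over $x\in\{\bar d,\bar e,\bar f\}$ or over interleavings is needed: writing $v_3=(x\,\bar y\,\bar t\,\bar u)$ and the gluing pattern $W_{\gm}=w_1\bm{a}w_2\bm{\bar a}w_3u\bm{x}w_4\bm{\bar x}\bar y$, the single surgery $\sigma_{a,x}$ works uniformly. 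The only ordering input used is that $v_3$ is of Type~2, which forces $x<\bar u<\bar t<\bar y$ and hence $\bar t,\bar u\in w_4$; Lemma~\ref{lemsurg} then gives $W_{\gm'}=w_3u\bm{A}w_2\bm{\bar A}\bar yw_1\bm{X}w_4\bm{\bar X}$, so the cycle $(A\,\bar y\,\bar t\,\bar u)$ carries the order $A<\bar y<\bar u<\bar t$, which is Type~3. Your proposal contains no candidate for this pair and no verification, so as written it is an outline rather than a proof.

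Two secondary points. First, your fallback route --- a preparatory surgery that spoils the good order followed by ``Lemma~\ref{tictac} verbatim'' --- is not automatic: that lemma is proved for a non simplifiable collection in the exact configuration of Figure~\ref{figsept}, and a preparatory surgery can change the types and local orders of the vertices it touches, so its hypotheses would have to be re-established rather than assumed. Second, your structural observations (that $x\neq\bar b$ by almost torality, and that the Type~2 vertex met by $x$ is a third vertex $v_3$ distinct from $v_1,v_2$) are correct and are in fact slightly more careful than what the paper records, but they do not substitute for the missing surgery.
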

\begin{proof}
If $x:=\min\{\bar{b}, \bar{d}, \bar{e}, \bar{f}\}$ is adjacent to a vertex $v:=(x\bar{y}\bar{t}\bar{u})$, a gluing pattern of $\gm$ is given by (see Figure ~\ref{serp}): 
\[W_{\gm}=w_1\bm{a}w_2\bm{\bar{a}}w_3u\bm{x}w_4\bm{\bar{x}}\bar{y}.\]

\begin{figure}[htbp]
\begin{center}
\includegraphics[width=5cm]{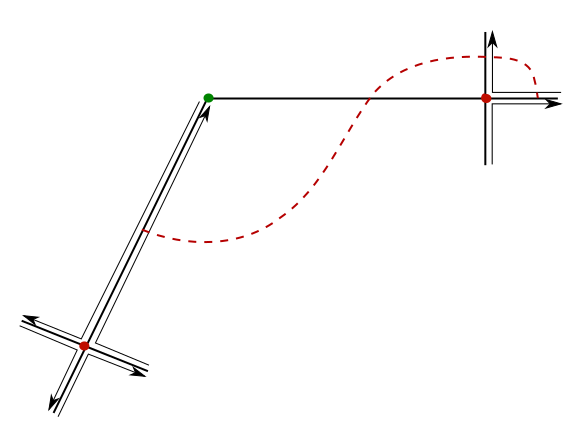}
\put(-3,30){\small{$a$}}
\put(-3,25.5){\small{$\bar{a}$}}
\put(-38,14){\small{$x$}}
\put(-42,15){\small{$\bar{x}$}}
\put(-37,6){\small{$u$}}
\put(-40,2.6){\small{$\bar{u}$}}
\put(-44.5,-0.5){\small{$t$}}
\put(-47.7,0.5){\small{$\bar{t}$}}
\put(-50,7.5){\small{$y$}}
\put(-49,11){\small{$\bar{y}$}}
\caption{}
\label{serp}
\end{center}
\end{figure}
Since $v$ is a vertex of Type ~2, $x<\bar{u}<\bar{t}<\bar{y}$. It implies that $\bar{t}\in w_4$ and $\bar{u}\in w_4$.

The oriented edges $a$ and $x$ are intertwined and \[W_{\gm'}=w_3u\bm{A}w_2\bm{\bar{A}}\bar{y}w_1\bm{X}w_4\bm{\bar{X}}\]
is a gluing pattern for $\gm':=\sigma_{a,x}(\gm)$. The vertex $v$ in $\gm'$ is defined by the cycle $(A\bar{y}\bar{t}\bar{u})$ and one checks that $A<\bar{y}<\bar{u}<\bar{t}$; that is $v$ is a vertex of Type ~3 and $\gm'$ is simplifiable. Hence, there is a sequence of simplification which strictly increases the number of 1-simple curves. 
\end{proof}

\begin{definition}
Let $\gm$ be a one-faced collection. We say that $\gm$ is a \textit{toral} one-faced collection if ~$\gm$ is an almost toral  one-faced collection and if every vertex of Type ~1 is adjacent to at most two vertices of Type ~2. 
\end{definition}
\begin{lemma}\label{tictactoe}
Let $\gm$ be an almost toral  one-faced collection. Then there is a sequence of surgeries $\gm_0=\gm\longrightarrow\gm_1\longrightarrow...\longrightarrow\gm_n$ such that $\gm_n$ is a toral  one-faced collection.   
\end{lemma}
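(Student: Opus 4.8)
The plan is to treat Lemma~\ref{tictactoe} as one more instance of the ``increase the number of $1$-simple curves'' scheme already used for Lemmas~\ref{type2neigh}, \ref{tictac} and~\ref{almideal}, this time with Lemma~\ref{idealstep} as the engine. Write $N_\gm$ for the number of $1$-simple curves of $\gm$, which is bounded above (by the genus, as recalled in Section~\ref{sect4}). I would show that an almost toral collection which is \emph{not} toral always satisfies the hypothesis of Lemma~\ref{idealstep} for some adjacent pair of vertices; applying that lemma produces an almost toral collection $\gm'$ with $N_{\gm'}>N_\gm$, and since $N$ is bounded the process must stop, necessarily at a toral collection.

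So fix an almost toral $\gm$ that is not toral. By definition there is a vertex $v_1$ of Type~$1$ adjacent to at least three vertices of Type~$2$. Because $\gm$ is almost toral, every adjacent pair consisting of a Type~$1$ and a Type~$2$ vertex has a good order; hence, for any Type~$2$ neighbour $v_2$ of $v_1$, the pair $(v_1,v_2)$ is exactly in the situation of Figure~\ref{figsept} and of Lemma~\ref{idealstep}, with $a=\min\{a,\bar b,\bar d,\bar e,\bar f,\bar g\}$ and $\bar g=\min\{\bar b,\bar d,\bar e,\bar f,\bar g\}$. It then remains to choose the designated neighbour $v_2$ so that the distinguished edge $x:=\min\{\bar b,\bar d,\bar e,\bar f\}$ sits at a Type~$2$ vertex; once this is arranged, Lemma~\ref{idealstep} applies verbatim.

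The edges $\bar b$ and $\bar g$, together with $a$, are the far ends of the three external half-edges of $v_2$, so they all terminate at Type~$1$ vertices (no two vertices of Type~$2$ are adjacent); by the normalisation and the good order, $a$ and $\bar g$ are the two smallest of the six external edges and lie on the $v_2$-side. The edges $\bar d,\bar e,\bar f$ are the far ends of the three remaining half-edges of $v_1$, i.e.\ they point to the three neighbours of $v_1$ other than $v_2$; since $v_1$ has at least three Type~$2$ neighbours, at least two of $\bar d,\bar e,\bar f$ terminate at Type~$2$ vertices. Thus the only ways $x$ can fail to point to a Type~$2$ vertex are $x=\bar b$, or $x$ equal to the at most one element of $\{\bar d,\bar e,\bar f\}$ whose endpoint is of Type~$1$. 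The heart of the proof is to rule these out by a suitable choice of $v_2$: running over the (at least three) Type~$2$ neighbours of $v_1$ and comparing, in the cyclic order $\gamma$, the positions of the half-edges facing $v_1$ against the external half-edges of the candidate $v_2$, one selects $v_2$ so that $\bar b$ is pushed past the smallest $v_1$-side edge and the latter lands on a Type~$2$ vertex. This bookkeeping on the positions of the six external edges---which is where the good-order hypothesis is essential---is the step I expect to be the main obstacle, and the only genuinely delicate point; once it is settled, everything else is formal.

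Granting the applicability of Lemma~\ref{idealstep}, we obtain a sequence of surgeries from $\gm$ to an almost toral $\gm'$ with $N_{\gm'}>N_\gm$. If $\gm'$ is toral we are done; otherwise we repeat the argument. As each round strictly increases the number of $1$-simple curves and this number is bounded above, the iteration terminates after finitely many steps, and a collection to which the reduction no longer applies is, by construction, toral. Concatenating the surgery sequences yields the desired chain $\gm_0=\gm\longrightarrow\gm_1\longrightarrow\cdots\longrightarrow\gm_n$ with $\gm_n$ toral.
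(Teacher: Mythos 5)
There is a genuine gap, and it sits exactly where you flag ``the main obstacle'': you never establish that the Type~2 neighbour $v_2$ can be chosen so that $x=\min\{\bar b,\bar d,\bar e,\bar f\}$ sits at a Type~2 vertex, and the paper's own proof indicates that this cannot always be arranged. Observe that $\bar b$ always terminates at a Type~1 vertex (it points to a neighbour of the Type~2 vertex $v_2$, and in an almost toral collection no two Type~2 vertices are adjacent), and when the offending Type~1 vertex is adjacent to exactly three Type~2 vertices and a fourth Type~1 vertex, one of $\bar d,\bar e,\bar f$ also ends at a Type~1 vertex; nothing in the good-order hypothesis (which only pins down $\bar g$ as the minimum of the five external edges) forces the overall minimum of $\{\bar b,\bar d,\bar e,\bar f\}$ away from these bad candidates. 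So your reduction to Lemma~\ref{idealstep} stalls precisely on the configuration that makes the lemma nontrivial.

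The paper resolves this with a second, genuinely different mechanism. It invokes Lemma~\ref{idealstep} only when the Type~1 vertex is adjacent to four Type~2 vertices, or to three vertices all of which are of Type~2; after those applications terminate, it is left with Type~1 vertices adjacent to exactly three Type~2 vertices and one Type~1 vertex, and for these it performs the explicit surgery $\sigma_{a,d}$ of Figure~\ref{tabletrois}. That surgery creates no new $1$-simple curve at all: it directly decreases the number of Type~2 neighbours of the offending vertex, and termination in that phase is measured by that adjacency count, not by $S_{\gm}$. Your proposal relies solely on the monotonicity of the number of $1$-simple curves, so even if your selection of $v_2$ could be salvaged in some configurations, your termination argument does not cover the move that is actually required in the residual case. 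To repair the proof you must either supply the combinatorial selection argument you postpone (which the structure of the paper's proof suggests is not available in general) or introduce the Figure~\ref{tabletrois} surgery together with a separate, non-$S_{\gm}$-based termination measure for that phase.
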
 

\begin{proof}
Let $v$ be a vertex of $\gm$ of Type ~1. If $v$ is adjacent to $4$ vertices of Type ~2 or $3$ vertices all of which are of Type ~2, Lemma ~\ref{idealstep} implies that there is a sequence of surgeries $\gm_0=\gm\longrightarrow\gm_1\longrightarrow...\longrightarrow\gm_n$ such that $S_{\gm}<S_{\gm_n}$. Since the number of 1-simple curves is bounded by the genus $g$, there is a sequence of surgeries $\gm_0=\gm\longrightarrow\gm_1\longrightarrow...\longrightarrow\gm_n$ such that $\gm_n$ is almost toral and such that every vertex of Type ~1 adjacent to three vertices of Type ~2 is also adjacent to a fourth of Type ~1.  The local configuration around those vertices is depicted in Figure ~\ref{tabletrois}, with $\bar{e}=\min\{\bar{d}, \bar{e}, \bar{f}\}$ . A gluing pattern for $\gm_n$ is given by:
\[W_{\gm_n}=w_1\bm{a}w_2\bm{\bar{a}}w_3\bm{d}w_4\bm{\bar{d}}.\]

The oriented edges $a$ and $d$ are intertwined and the surgery $\sigma_{a,d}(\gm_n)$ decreases the number of adjacent vertices to $v$ (see Figure ~\ref{tabletrois}). Following this process, we get a toral one-faced collection after finitely many surgeries. 

\begin{figure}[htbp]
\begin{center}
\includegraphics[width=5.5cm]{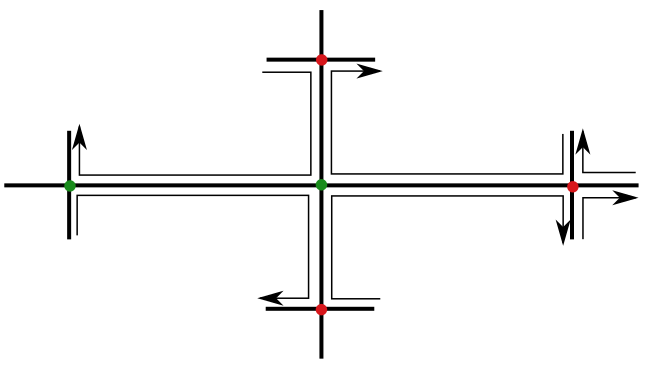}\hspace{0.5cm}
\includegraphics[width=6cm]{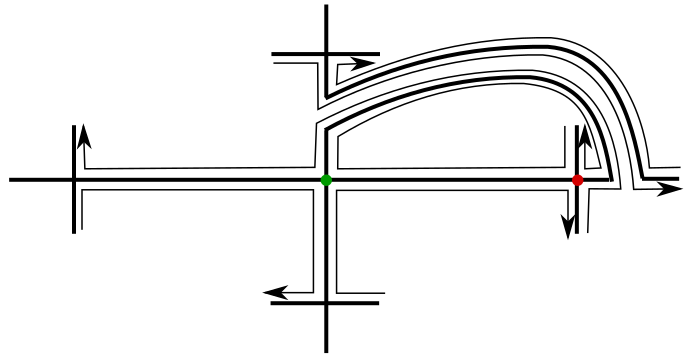}
\put(-68,17){\tiny{$a$}}
\put(-68,12.5){\tiny{$\bar{a}$}}
\put(-71,20){\tiny{$b$}}
\put(-75,20){\tiny{$\bar{b}$}}
\put(-84,17){\tiny{$c$}}
\put(-84,12.5){\tiny{$\bar{c}$}}
\put(-93,20){\tiny{$d$}}
\put(-97,20){\tiny{$\bar{d}$}}
\put(-106,17){\tiny{$e$}}
\put(-106,12.5){\tiny{$\bar{e}$}}
\put(-97,9){\tiny{$f$}}
\put(-93,9){\tiny{$\bar{f}$}}
\put(-71.5,10){\tiny{$g$}}
\put(-75.5,10){\tiny{$\bar{g}$}}
\put(-34,17.5){\tiny{$1$}}
\put(-34.5,13){\tiny{$2$}}
\put(-30.5,13){\tiny{$3$}}
\put(-30.5,17.5){\tiny{$4$}}
\put(-9.3,17.5){\tiny{$1$}}
\put(-8.8,13){\tiny{$2$}}
\put(-12.5,13){\tiny{$3$}}
\put(-12.5,17.5){\tiny{$4$}}
\caption{}  
\label{tabletrois}
\end{center}
\end{figure}
\end{proof}
\begin{lemma}\label{ideal}
Let $\gm$ be a toral  one-faced collection in $\sg_{g+1}$. Then $\gm=(\gm',x)\#\gm_{\mathbb{T}}$ where $\gm'$ is a one-faced collection in $\sg_{g}$. 
\end{lemma}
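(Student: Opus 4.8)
The plan is to recognize the exact local footprint that a connected sum with $\gm_{\mathbb{T}}$ leaves on a gluing pattern, and then to show the toral hypotheses force that footprint to appear. First I would read off the two vertices created by a connected sum. Writing $W_{\gm_{\mathbb{T}}}=y\,c\,\bar y\,\bar c$, Lemma~\ref{sum} gives a gluing pattern $x_1w_1\bar x_1x_2w_2\bar x_2\,y_1c\bar y_1y_2\bar c\bar y_2$ for $(\gm',x)\#\gm_{\mathbb{T}}$; computing $\mu=\alpha\gamma$ one finds its two new vertices are the \emph{torus vertex} $(y_1\,\bar c\,y_2\,c)$ and the \emph{connecting vertex} $(\bar x_1\,\bar x_2\,\bar y_1\,\bar y_2)$. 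Using the order read off the pattern ($y_1<c<y_2<\bar c$ and $\bar x_1<\bar x_2<\bar y_1<\bar y_2$), the torus vertex is of Type~2 and carries the self-loop $c=\{c,\bar c\}$, the connecting vertex is of Type~1, and the two are joined by the double edge $\{y_1,y_2\}$. So the goal reduces to exhibiting inside a toral $\gm$ a Type~2 vertex bearing a self-loop whose two remaining darts land, consecutively, on one common Type~1 vertex. Once such a block $y_1c\bar y_1y_2\bar c\bar y_2$ is isolated, reversing Lemma~\ref{sum}---merging the connecting vertex's two other darts $x_1,x_2$ into a single edge $x$---produces $\gm'$, and the Euler count $V=2g-1$, $E=4g-2$, $F=1$ certifies that $\gm'$ is a one-faced collection on $\sg_g$, whence $\gm=(\gm',x)\#\gm_{\mathbb{T}}$.

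For the existence of self-loops I would count. On $\sg_{g+1}$ a non-simplifiable collection has $g+1$ Type~2 and $g$ Type~1 vertices (the Corollary). Since $\gm$ is almost toral no two Type~2 vertices are adjacent, so each dart at a Type~2 vertex is either in a self-loop or points to a Type~1 vertex; hence the number of Type~1--Type~2 darts is $4(g+1)-2L_2$, where $L_2$ is the number of self-loops at Type~2 vertices. As this cannot exceed the $4g$ darts available at Type~1 vertices, $4(g+1)-2L_2\le 4g$, giving $L_2\ge 2$ (and a self-loop vertex cannot have a second self-loop, since that would disconnect $\gm$). Moreover, inspecting the four $\alpha$-pairings inside a cycle $(t\,x\,y\,z)$ and using $\gamma=\alpha\mu$, every "adjacent" pairing (such as $x=\bar t$) forces $\gamma$ to fix a letter, i.e. a length-one face, which is impossible; so every self-loop at a Type~2 vertex is of the "opposite" kind $\{t,y\}$ or $\{x,z\}$. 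This is precisely the local word pattern $p\,c\,\bar p\,\dots\,q\,\bar c\,\bar q$ of the torus vertex above, so at least two candidate torus vertices are guaranteed.

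The hard part will be the final step: promoting such a self-loop vertex to a genuine connected-sum configuration, i.e.\ showing that its two external darts $\bar p,\bar q$ (the $\bar y_1,\bar y_2$ above) sit on one and the same Type~1 vertex and are consecutive, so that the torus block really splits off. This is where the remaining two toral hypotheses must be used, and the counting above is not by itself enough (it forces double adjacency only in the extreme case $L_2=g+1$). I expect to run a gluing-pattern case analysis in the spirit of Lemmas~\ref{tictac} and~\ref{idealstep}: if a self-loop Type~2 vertex were joined to two \emph{distinct} Type~1 vertices by single edges, the good-order condition around one of these Type~1--Type~2 pairs (Figure~\ref{figsept}) should fail or reveal a hidden simplification via some $\sigma_{\cdot,\cdot}$ as in Lemma~\ref{lemsurg}, contradicting that $\gm$ is toral; and the "at most two Type~2 neighbours per Type~1 vertex" condition should pin down the cyclic placement of $\bar y_1,\bar y_2$ at the connecting vertex so that the block is contiguous. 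Verifying that the good-order normalization forces the loop dart to be the smallest external letter, and hence the block to close up exactly as in $(\gm',x)\#\gm_{\mathbb{T}}$, is the delicate bookkeeping I anticipate to be the main obstacle; after it, reading off $W_{\gm'}$ and invoking the converse of Lemma~\ref{sum} closes the argument.
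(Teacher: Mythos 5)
Your identification of the local footprint of a connected sum --- a Type~2 vertex carrying a self-loop, joined by a double edge, consecutively, to a Type~1 vertex --- is correct, and your dart count giving at least two self-loops at Type~2 vertices is valid (as is your observation that adjacent $\alpha$-pairings inside a $\mu$-cycle would create a fixed point of $\gamma$). But the proposal has a genuine gap exactly where you flag it: nothing in your argument forces the two external darts of a self-loop vertex to land on one and the same Type~1 vertex, let alone in consecutive position, and you offer only the expectation that a case analysis ``in the spirit of Lemmas~\ref{tictac} and~\ref{idealstep}'' will close this. As you yourself concede, $L_2\geq 2$ is compatible with every self-loop vertex being joined to two distinct Type~1 vertices by single edges; the good-order condition and the at-most-two-Type~2-neighbours condition are invoked only as slogans and never actually used. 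So the central step of the lemma is missing.

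The idea you are missing is a different count, the one the paper runs: double-count the number $N_{1,2}$ of adjacent (Type~1, Type~2) pairs. The toral hypothesis gives $N_{1,2}\leq 2N_1$, and if every Type~2 vertex had at least two Type~1 neighbours one would also have $2N_2\leq N_{1,2}$, whence $N_2\leq N_1$, contradicting the fact that on $\sg_{g+1}$ there are $g+1$ vertices of Type~2 and only $g$ of Type~1. Hence some Type~2 vertex $v_0$ is adjacent to exactly one vertex $v_1$, necessarily of Type~1; connectivity of $\gm$ together with your no-adjacent-pairing remark then forces $v_0$ to carry exactly one self-loop (a 1-simple curve) and a double edge to $v_1$ --- precisely your torus-vertex configuration, with the double adjacency now guaranteed rather than hoped for. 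Note also that even at this point the consecutivity of the two darts at $v_1$ is not automatic: the paper resolves it by one further surgery $\sigma_{y,z}$ which turns $v_1$ into a self-intersection point while keeping it of Type~1, so that the decomposition $\gm=(\gm',x)\#\gm_{\mathbb{T}}$ is reached only after this extra move; any completion of your plan would have to confront the same issue.
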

\begin{proof} We have to show that there is a vertex of Type ~2 which is adjacent to exactly one vertex of Type ~1.

Assume that every vertex of Type ~2 is adjacent to at least two vertices of Type ~1. Let $N_1$ and $N_2$ be the number of vertices of Type ~1 and Type ~2 respectively, and let $N_{1,2}$ be the number of pairs of vertices of Type ~1 and Type ~2 which are adjacent.
 
Since $\gm$ is a toral  one-faced collection, any vertex of Type ~1 has at most two vertices of Type ~2. It implies that,  \[N_{1,2}<2N_1.\]

On the other part, we have assumed that every vertex of Type ~2 is adjacent to at least two vertices of Type ~1. Therefore, \[2N_2 \leq N_{1,2}.\]

Combining the two inequalities above, we get $N_2\leq N_1$ which contradicts the fact that we have ~$g-1$ vertices of Type ~1 and $g$ vertices of Type ~2 in a non simplifiable one-faced collection.

So, there is a vertex $v_0$ of Type ~2 which is adjacent to exactly one vertex $v_1$ of Type ~1. As vertices of Type ~2 are not adjacent, $v_0$ lies on a 1-simple curve. $\bm{(*)}$ \vspace{0.2cm}

Next, we show that $v_1$ can be transformed into a self-intersection point (if it is not the case) by a surgery. Assume that $v_1$ is not a self-intersection point. Then  a gluing pattern for $\gm$ is given by:
$$W_{\gm}=w_1x\bm{y}w_2\bm{\bar{y}}\bm{z}w_3\bm{\bar{z}}tw_4\bar{t}\bar{x};$$
where $v_1$ is defined by the cycle $(yzt\bar{x})$ (Figure ~\ref{selfint}). 
\begin{figure}[htbp]
\begin{center}
\includegraphics[scale=0.12]{consecutifdex.png} \put(-30,14){\small{$\bar{x}$}} \put(-30,8){\small{$x$}} \put(-20,1){\small{$y$}} \put(-13.5,1){\small{$\bar{y}$}} \put(-2,8){\small{$z$}} \put(-2,13.5){\small{$\bar{z}$}} \put(-13,20){\small{$t$}} \put(-19.5,20){\small{$\bar{t}$}}\hspace{1.5cm}
\includegraphics[scale=0.15]{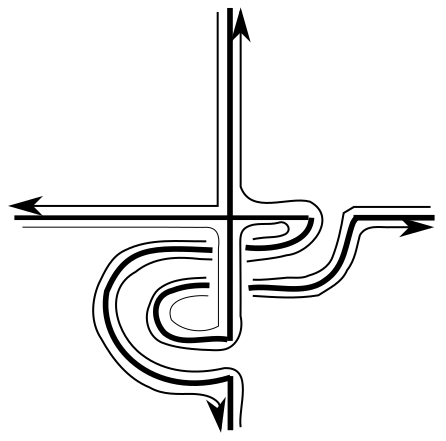}
\caption{Transforming a Type ~1 vertex to a self-intersection point.}
\label{selfint}
\end{center}
\end{figure}

The oriented edges $y$ and $z$ are intertwined and a gluing pattern for $\gm':=\sigma_{y,z}(\gm)$ is given by: $$W_{\gm'}=\bm{Y}w_2\bm{\bar{Y}}tw_4\bar{t}\bar{x}w_1x\bm{Z}w_3\bm{\bar{Z}}$$
The vertex $v_1$ in $\gm'$ is defined by the cycle $(Yt\bar{x}Z)$ and $Y<t<\bar{x}<Z$; that is $v_1$ is still a vertex of Type ~1. Moreover, $v_1$ get transformed to a self-intersection point. So, the surgery on $\gm$ between $y$ and $z$ has transformed $v_1$ to a self-intersection point of Type ~1. $\bm{(**)}$

Finally, $\bm{(*)}$ and $\bm{(**)}$ implies that $$\gm=(\gm',x)\#(\gm_{\mathbb{T}},x_0);$$ with $\gm'$ a one-faced collection on $\sg_{g-1}$. 
\end{proof}
\end{paragraph}
\end{section}
\begin{section}{Proof of the main theorems}\label{sect5}
In this section, we prove Theorem ~\ref{thm1}, Theorem ~\ref{thm2} and Theorem ~\ref{diam}. We recall that the graph $K_g$ is the graph whose vertices are homeomorphism classes of one-faced collections on $\sg_g$, and on which two vertices $\gm_1$ and $\gm_2$ are connected by an edge if there is a surgery which transforms $\gm_1$ into $\gm_2$ (if a surgery on $\gm$ fix $\gm$, we do not put a loop).  The graph $\widehat{K}_g:=\displaystyle{\sqcup_{i\leq g}K_i}$ is the disjoint union of the graphs ~$K_i$ on which we add an edge between two one-faced collections ~$\gm_1$ and ~$\gm_2$ on $\sg_i$ and $\sg_{i+1}$ respectively if $\gm_2$ is a connected sum of $\gm_1$ with the one-faced on the torus.

The following proposition is the main technical result: it easily implies Theorem ~\ref{thm2}. It also implies Theorem ~\ref{thm1} with a bit of extra-work and its proof uses most lemmas of Section \ref{sect4}. 
\begin{prop}\label{prop} Let $\gm$ be a one-faced collection on $\sg_{g+1}$. Then there is a finite sequence of surgeries $\gm:=\gm_0\longrightarrow...\longrightarrow\gm_n$ and a one-faced collection $\gm'$ on $\sg_g$ with a marked edge $x$ such that $\gm_n=(\gm',x)\#\gm_{\mathbb{T}}$.
\end{prop}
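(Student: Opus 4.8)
The plan is to reduce Proposition~\ref{prop} to the structural results already established in Section~\ref{sect4}, chaining them together so that an arbitrary one-faced collection on $\sg_{g+1}$ is transformed by surgeries into one that splits off a torus summand. First I would observe that the hypotheses of the lemmas are cumulative: starting from an arbitrary $\gm$ on $\sg_{g+1}$, I apply a sequence of simplifications until the collection is non simplifiable (this terminates because $S_{\gm}$ is bounded by the genus, as noted after the simplification lemma). Thus without loss of generality I may assume $\gm_0$ is non simplifiable, so all its vertices are of Type~1 or Type~2, with exactly $g$ of Type~2 and $g$ of Type~1 (here the genus is $g+1$, so there are $g+1$ of Type~2 and $g$ of Type~1 by the corollary).

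Next I would invoke Lemma~\ref{almideal} to pass, via further surgeries, to an \emph{almost toral} collection, then Lemma~\ref{tictactoe} to pass to a \emph{toral} collection. Each of these steps is a finite sequence of surgeries by construction, and since surgery-equivalence is what the statement asks about, concatenating the sequences keeps us in the allowed class of moves. At this point I would apply Lemma~\ref{ideal} directly: a toral one-faced collection on $\sg_{g+1}$ is of the form $(\gm',x)\#\gm_{\mathbb{T}}$ with $\gm'$ a one-faced collection on $\sg_g$. This is exactly the conclusion sought, so the proof amounts to assembling the chain
\[
\gm \longrightarrow (\text{non simplifiable}) \longrightarrow (\text{almost toral}) \longrightarrow (\text{toral}) = (\gm',x)\#\gm_{\mathbb{T}}.
\]

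The genuinely nontrivial content has already been absorbed into the lemmas of Section~\ref{sect4}, so the proof of the proposition itself should be short. The one point requiring care is the bookkeeping of \emph{which genus} the intermediate collections live on and that the counts of Type~1 and Type~2 vertices are consistent throughout: surgeries and simplifications preserve genus, so every collection in the chain lives on $\sg_{g+1}$ until the final connected-sum decomposition is read off, and $\gm'$ then lives on $\sg_{g}$ with $2g-1$ vertices as required. The main obstacle, conceptually, is verifying that the output of one lemma legitimately feeds into the hypothesis of the next—for instance, that the almost toral collection produced by Lemma~\ref{almideal} is still non simplifiable (it is, by the definition of almost toral) and that Lemma~\ref{tictactoe} does not destroy the almost-toral property it relies on (its proof is explicitly designed to preserve it while reducing the adjacency count). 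Once these interface conditions are checked, the proposition follows immediately from Lemma~\ref{ideal}.
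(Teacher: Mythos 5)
Your proof is correct and follows essentially the same chain as the paper's: simplify to a non simplifiable collection, apply Lemma~\ref{almideal} to reach an almost toral one, pass to a toral collection, and read off the decomposition from Lemma~\ref{ideal}. (You correctly invoke Lemma~\ref{tictactoe} for the passage to a toral collection; the paper's proof cites Lemma~\ref{idealstep} at that step, which appears to be a slip.)
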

\begin{proof}
Let $\gm$ be a one-faced collection on $\sg_g$, there is a sequence of surgeries $\gm\longrightarrow...\longrightarrow\gm_1$ such that ~$\gm_1$ is non simplifiable. By Lemma ~\ref{almideal}, there is a sequence of surgeries $\gm_1\longrightarrow...\longrightarrow\gm_2$ such that $\gm_2$ is almost toral  and by Lemma ~\ref{idealstep}, there is a sequence of surgeries $\gm_2\longrightarrow...\longrightarrow\gm_3$ such that $\gm_3$ is toral. Lemma \ref{ideal} implies that $\gm_3=(\gm',x)\#\gm_{\mathbb{T}}$ with $\gm'$ a one-faced collection in $\sg_{g-1}$.
\end{proof}
We can now prove Theorem ~\ref{thm2}, which states that for every $g$ the graph ~$\widehat{K}_g$ is connected. 

\begin{proof}[Proof of Theorem ~\ref{thm2}] Let $\gm \in K_g$. By Proposition ~\ref{prop}, there is path in $K_g$ from $\gm$ to ~$(\gm',x)\#\gm_{\mathbb{T}}$ where $\gm' \in K_{g-1}$. Thus, there is path in $\widehat{K}_g$ from ~$\gm$ to $\gm'$. By induction on ~$g$, we deduce a path from $\gm$ to $\gm_{\mathbb{T}}$. So,  $\widehat{K}_g$ is connected.    
\end{proof}

Now, we turn to the proof of Theorem ~\ref{thm1}. Let us start with some preliminaries. 
\begin{lemma}\label{side}
Let $\gm$ be a one-faced collection on $\sg_g$ and $x$ an oriented edge of $\gm$. Then there is a surgery from $(\gm,x)\#\gm_{\mathbb{T}}$ to $(\gm,\bar{x})\#\gm_{\mathbb{T}}$. 
\end{lemma}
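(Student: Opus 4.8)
The plan is to argue entirely at the level of gluing patterns, realizing $(\gm,\bar{x})\#\gm_{\mathbb{T}}$ as the image of $(\gm,x)\#\gm_{\mathbb{T}}$ under a single surgery performed on the two halves of the marked edge.

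First I would fix a gluing pattern $W_{\gm}=xw_1\bar{x}w_2$ for $\gm$ read from the oriented edge $x$. By Lemma~\ref{sum}, a gluing pattern for $(\gm,x)\#\gm_{\mathbb{T}}$ is
\[W_1=x_1w_1\bar{x}_1x_2w_2\bar{x}_2\,T,\]
where $T$ denotes the contiguous block coming from the torus factor. By the observation following Lemma~\ref{sum}, the connected sum with $\gm_{\mathbb{T}}$ does not depend on the chosen edge of $\gm_{\mathbb{T}}$, so $T$ will be \emph{the same} block in both connected sums. Reading $W_{\gm}$ instead from $\bar{x}$, that is $W_{\gm}=\bar{x}w_2xw_1$, and applying Lemma~\ref{sum} again gives a gluing pattern
\[W_2=m_1w_2\bar{m}_1m_2w_1\bar{m}_2\,T\]
for $(\gm,\bar{x})\#\gm_{\mathbb{T}}$. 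Thus the two collections differ only by interchanging the roles of the blocks $w_1$ and $w_2$ around the split edge.

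Next I exhibit the surgery. In $W_1$ the two halves $x_1$ and $x_2$ of the marked edge occur in cyclic order $x_1,\bar{x}_1,x_2,\bar{x}_2$, which is exactly the intertwined pattern, so $x_1$ and $x_2$ are intertwined and $\sigma_{x_1,x_2}$ is a genuine surgery. To apply the formula of Lemma~\ref{lemsurg}, I read $W_1$ cyclically in the form $A\,x_1\,B\,\bar{x}_1\,C\,x_2\,D\,\bar{x}_2$ with $A=T$, $B=w_1$, $C=\emptyset$ and $D=w_2$; the lemma then yields
\[\sigma_{x_1,x_2}(W_1)=X\,w_1\,\bar{X}\,T\,Y\,w_2\,\bar{Y}.\]
A cyclic permutation starting at $Y$ together with the relabeling $Y\mapsto m_1$, $X\mapsto m_2$ turns this word into $m_1w_2\bar{m}_1m_2w_1\bar{m}_2\,T=W_2$. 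Hence $\sigma_{x_1,x_2}$ carries $(\gm,x)\#\gm_{\mathbb{T}}$ to $(\gm,\bar{x})\#\gm_{\mathbb{T}}$, producing the desired edge of $K_{g+1}$.

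The only delicate point is the bookkeeping of the cyclic word: one must record the four blocks $A,B,C,D$ around the split edge correctly, and in particular notice that the whole torus block $T$ sits in the block $A$ lying between $\bar{x}_2$ and $x_1$. One then checks that after the block-swap prescribed by Lemma~\ref{lemsurg} the torus block re-emerges intact, so that the output is again a connected sum with $\gm_{\mathbb{T}}$, merely attached along $\bar{x}$ rather than $x$. Since $T$ is fixed once and for all and the surgery formula only permutes blocks, no topological subtlety beyond this matching arises; I expect the main (and only) effort to be the careful tracking of the cyclic reading, with no genuine obstacle.
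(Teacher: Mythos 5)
Your proposal is correct and follows essentially the same route as the paper: both form the gluing patterns of $(\gm,x)\#\gm_{\mathbb{T}}$ and $(\gm,\bar{x})\#\gm_{\mathbb{T}}$ via Lemma~\ref{sum}, observe that they differ only by swapping the blocks $w_1$ and $w_2$, and realize that swap as the surgery $\sigma_{x_1,x_2}$ between the two halves of the split edge using the formula of Lemma~\ref{lemsurg}. Your bookkeeping of the cyclic word (identifying $A=T$, $C=\emptyset$) is in fact slightly more careful than the paper's, which contains a small typo at the final identification.
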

Lemma ~\ref{side} states that up to surgery the connected sum of $\gm$ with $\gm_{\mathbb{T}}$ depends only on the edge we choose on $\gm$ but not on its orientation. 

\begin{proof}
Let $W_{\gm}:=w_1xw_2\bar{x}$ be a gluing pattern for $\gm$, $\gm_1:=(\gm,x)\#\gm_{\mathbb{T}}$ and $\gm_2:=(\gm,\bar{x})\#\gm_{\mathbb{T}}$. We recall that $W_{\gm_{\mathbb{T}}}=ab\bar{a}\bar{b}$ is a gluing pattern for $\gm_{\mathbb{T}}$. By Lemma \ref{sum},
$$W_{\gm_1}=x_1w_1\bar{x}_1x_2w_2\bar{x_2}a_1b\bar{a}_1a_2\bar{b}\bar{a}_2$$
and
$$W_{\gm_2}=x_1w_2\bar{x}_1x_2w_1\bar{x_2}a_1b\bar{a}_1a_2\bar{b}\bar{a}_2$$ are gluing patterns of $\gm_1$ and $\gm_2$, respectively.

Thus, in $W_{\gm_1}$, $x_1$ and $x_2$ are intertwined, $\gm':=\sigma_{x_1,x_2}(\gm_1)$ is one-faced, with gluing pattern
$$W_{\gm'}=x_1w_2\bar{x}_1x_2w_1\bar{x}_2a_1b\bar{a}_1a_2\bar{b}\bar{a}_2.$$ We check that $W_{\gm'}=W_{\gm_1}$. So, $\sigma_{x_1,x_2}(\gm_1)=\gm_2$. 
\end{proof}

\begin{figure}[htbp]
\begin{center}
\includegraphics[scale=0.17]{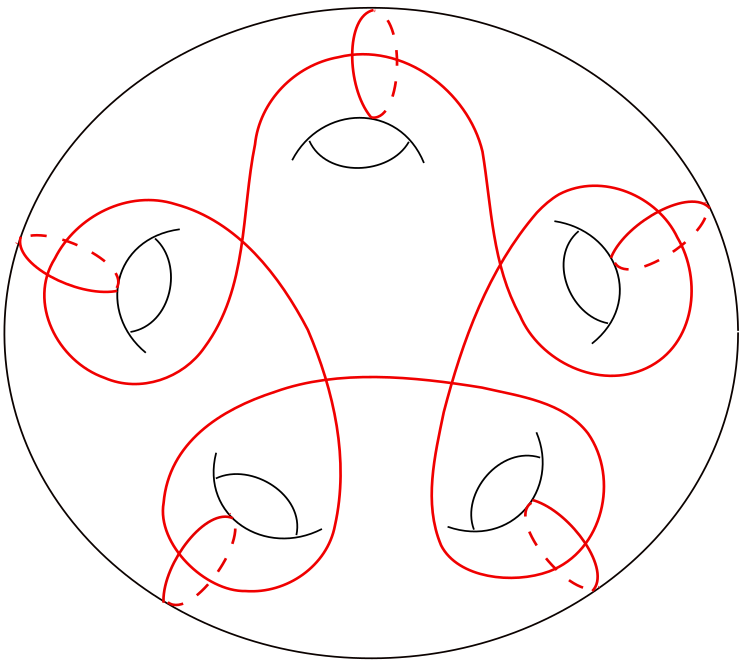}
\caption{The 5-necklace $N_5$.}
\label{neck}
\end{center}
\end{figure}
\begin{definition}
We call \textit{g-necklace} the homeomorphism class of the one-faced collection on $\sg_g$, denoted by $N_g$, with $g$ 1-simple curves and one spiraling curve ~$\eta$ with $g-1$ self intersection points (see Figure \ref{neck} for the 5-necklace). 
\end{definition}
\begin{remark}
Intersection points between 1-simple curves and $\gamma$ in $N_g$ are of Type ~2; the others are Type ~1 vertices. There are $g-1$ vertices of Type ~2 which are adjacent to exactly one vertex of Type ~1 and one \textit{special vertex} of Type ~2 which is adjacent to two vertices of Type ~1.
\end{remark}
We can now prove Theorem ~\ref{thm1}, namely given $\gm_1$ and $\gm_2$ are two one-faced collections on a genus $g$ surface $\sg_g$ there is a finite sequence of surgeries from $\gm_1$ to $\gm_2$. 

\begin{proof}[Proof of Theorem ~\ref{thm1}] We give a proof by induction on $g$. Assume $K_g$ is connected.
Let $\gm$ an one-faced collection on $\sg_{g+1}$. By Proposition ~\ref{prop}, there exists a sequence of surgeries $\gm=\gm_0\longrightarrow...\longrightarrow \gm_n$ where $\gm_n$ is of the form $(\gm',x)\#\gm_{\mathbb{T}}$.

Since we have assumed that $K_g$ is connected, then there is a sequence of surgeries $\gm'=\gm'_0\longrightarrow...\longrightarrow \gm'_n=N_g$ from $\gm'$ to $N_g$ (the $g$-necklace). This sequence lifts to a sequence $\gm=(\gm',x)\#\gm_{\mathbb{T}}\longrightarrow...\longrightarrow (N_g,x_n)\#\gm_{\mathbb{T}}$ of surgeries on $\sg_{g+1}$. Indeed if $x$ is an oriented edge of $\gm$, then $x$ brokes into two oriented edges $x_1$ and $x_2$. The surgery $\sigma_{x,y}(\gm')$ (respectively $\sigma_{z,y}(\gm')$) lift to $\sigma_{x_1,y}(\gm)$ (respectively $\sigma_{z,y}(\gm)$).     

By Lemma ~\ref{side}, up to surgery the way we glue $\gm_{\mathbb{T}}$ on $N_g$ depends only on the edges of $N_g$ but not on there sides. It follows that there are three situations depending whether:
\begin{itemize}
\item $x_n$ is the side of an edge connecting two vertices of Type ~1, or one vertex of Type ~1 and the special vertex of Type ~2,
\item $x_n$ is the side of an edge connecting one vertex of Type ~1 and one vertex of Type ~2 which is not the special one.
\item $x_n$ lies on a 1-simple closed curve.
\end{itemize}
The first situation leads to the $(g+1)$-necklace $N_{g+1}$, and for the other two situations there is a path to the $(g+1)$-necklace. We give the paths for the genus $5$ case in Figure ~\ref{suite}; the other cases inductively follow the same sequence of surgeries.

Since $K_1$ (a single vertex) is connected, by induction $K_g$ is also connected. 
\begin{figure}[htbp]
\begin{center}
\[
   \xymatrix{
       \includegraphics[scale=0.14]{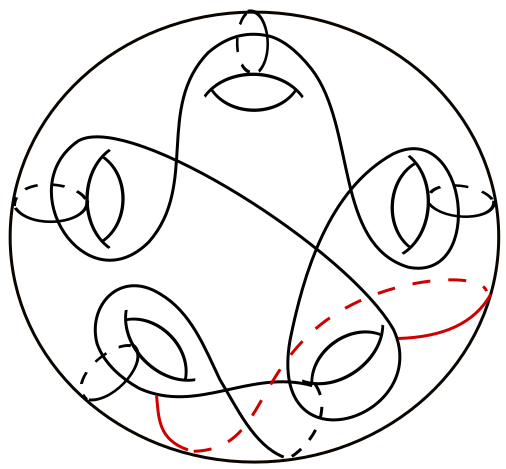}
   & \includegraphics[scale=0.14]{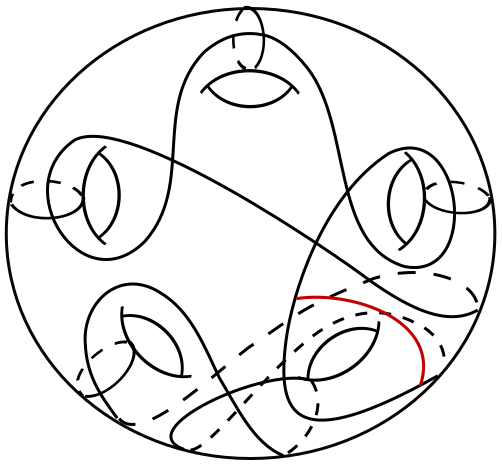}
  & \includegraphics[scale=0.14]{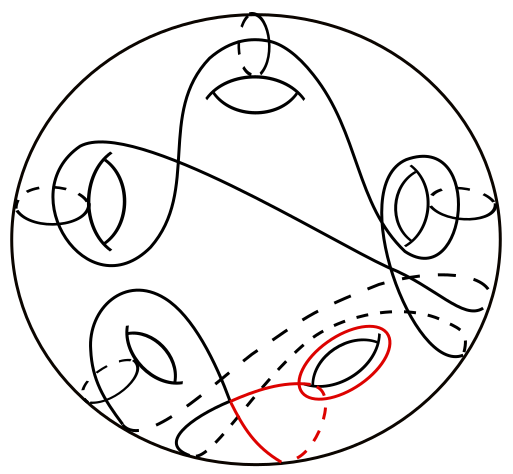} \\
        \includegraphics[scale=0.14]{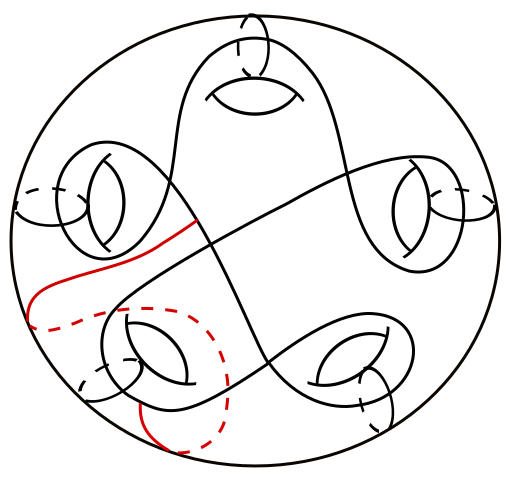}
  & \includegraphics[scale=0.14]{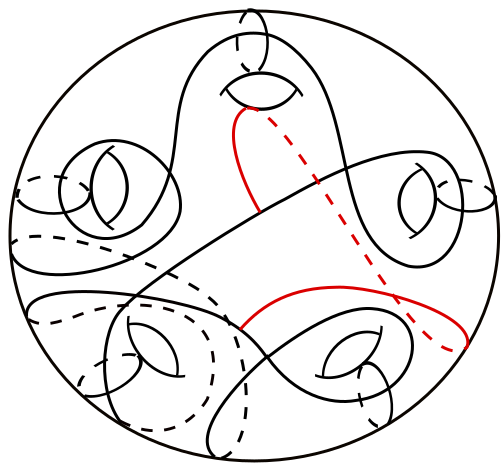}
  & \includegraphics[scale=0.14]{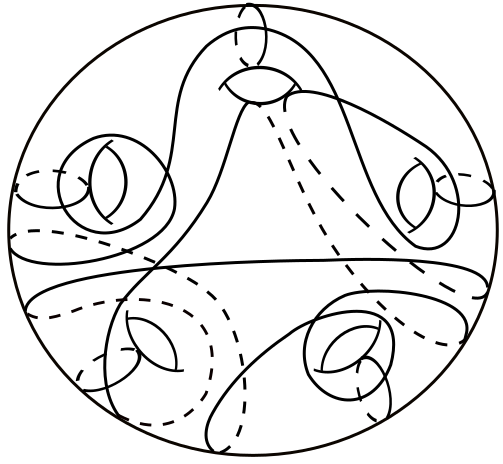}
            }
\put(-72,8){\huge{$\rightarrow$}}
\put(-36,8){\huge{$\rightarrow$}}
\put(-72,-25){\huge{$\rightarrow$}}
\put(-36,-25){\huge{$\rightarrow$}}
\]
\caption{Sequence of surgeries to the necklace. The sequence on the first arrow goes from the case where $\gm_{\mathbb{T}}$ is glued on a 1-simple curve to the case where $\gm_{\mathbb{T}}$ is glued between one vertex of Type ~1 and one vertex of Type ~2. The second arrow leads to the $(g+1)$-necklace. The red arcs are the arcs on which we apply surgeries.}
\label{suite}
\end{center}
\end{figure}  
 \end{proof}

Now we turn to the question of the diameter of $K_g$ that we denote by $D_g$. We prove 
\begin{thm}
For every $g$ we have $D_g\leq 3g^2+9g-12$.
\end{thm}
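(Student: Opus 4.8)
The plan is to bound the diameter $D_g$ by combining two quantities: the cost of reducing an arbitrary one-faced collection on $\sg_g$ to a canonical model (the $g$-necklace $N_g$), and then using the triangle inequality, since $D_g \leq 2 \cdot \max_{\gm} d(\gm, N_g)$ where the distance is measured in $K_g$. So first I would fix $N_g$ as the target and estimate the number of surgeries needed to travel from any $\gm \in K_g$ to $N_g$, following exactly the constructive path laid out in the proof of Theorem~\ref{thm1} and Proposition~\ref{prop}.

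The key is to count surgeries stage by stage through the normalization process. First, simplifying $\gm$ to a non-simplifiable collection costs at most $g$ surgeries, since each simplification strictly increases $S_{\gm}$ and $S_{\gm} \leq g$. Next, the passage to an almost toral collection (Lemma~\ref{almideal}), then to a toral collection (Lemma~\ref{tictactoe}), and the splitting off of a torus summand (Lemma~\ref{ideal}, via Proposition~\ref{prop}) each consume a bounded number of surgeries, where the bound depends on $g$ because each invocation of Lemmas~\ref{type2neigh}, \ref{tictac}, \ref{idealstep} again increases the number of 1-simple curves and so can be applied at most $O(g)$ times, with each application itself requiring $O(g)$ simplifications to restabilize. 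This nested structure is what produces the quadratic term $3g^2$. I would track the constants carefully: each of the $O(g)$ ``un-hiding'' surgeries is followed by a cascade of up to $O(g)$ simplifications to return to a non-simplifiable (then almost toral, then toral) state, giving roughly $g \cdot g$ surgeries at each qualitative stage. Summing the contributions across the stages and adding the linear costs (the single torus-splitting surgery per genus drop, the necklace-rearrangement surgeries of Figure~\ref{suite}, and the orientation-flip surgery of Lemma~\ref{side}) yields a bound of the stated shape $3g^2 + 9g - 12$.

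The inductive structure enters because Proposition~\ref{prop} reduces genus $g+1$ to genus $g$: writing $D_{g+1}$ in terms of $D_g$ plus the cost of one round of normalization-and-splitting gives a recursion, and I would solve or telescope this recursion to extract the closed-form quadratic bound. Concretely, if $R_g$ denotes the number of surgeries to bring an arbitrary collection on $\sg_g$ into the form $(\gm', x)\#\gm_{\mathbb{T}}$ and then rearrange into $N_g$, then $d(\gm, N_g) \leq R_g + d(\gm', N_{g-1})$ plus the lifting cost, and iterating down to the torus accumulates $\sum_{i=1}^{g} O(i)$ surgeries, which is quadratic. Doubling for the triangle inequality and matching the explicit small-genus data (recall $K_2$ is connected with six vertices) would pin down the exact coefficients.

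The hard part will be the bookkeeping: making the ``at most $O(g)$ applications, each costing $O(g)$'' estimates precise enough to land on the specific constants $3$, $9$, and $-12$ rather than merely establishing a quadratic bound. In particular, one must verify that the cascades of simplifications triggered by Lemmas~\ref{type2neigh}--\ref{idealstep} do not overlap or compound in a way that inflates the count beyond quadratic, and that the lifting of surgeries from $\sg_g$ to $\sg_{g+1}$ (used in the proof of Theorem~\ref{thm1}) is length-preserving so that the genus recursion adds only the expected per-level overhead. Getting the $-12$ constant correct will almost certainly require checking the base cases $g=1$ and $g=2$ by hand against the recursion.
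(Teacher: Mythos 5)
Your skeleton matches the paper's: bound $D_g$ by twice the maximal distance $d_g$ to the necklace $N_g$, set up a recursion $d_g\leq R_g+d_{g-1}+(\text{constant regluing cost})$ via Proposition~\ref{prop} and the length-preserving lift of surgeries used in the proof of Theorem~\ref{thm1}, then telescope. The gap is in your estimate of $R_g$, the per-level normalization cost, and it is not mere bookkeeping. You assert that the quadratic term arises from a nested structure \emph{within one genus level}: $O(g)$ invocations of Lemmas~\ref{type2neigh}--\ref{idealstep}, each followed by a cascade of $O(g)$ simplifications, ``giving roughly $g\cdot g$ surgeries at each qualitative stage.'' If that were the right count, iterating over the $g$ genus levels would give a cubic bound, not $3g^2+9g-12$; and your later paragraph contradicts it by taking the per-level cost to be $O(i)$ so that $\sum_i O(i)$ is quadratic. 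Only the second account is correct, and the reason is the point you are missing: the number $S_{\gm}$ of $1$-simple curves is monotone non-decreasing under \emph{every} step of the normalization at a fixed genus --- each simplification, apparent or hidden, increases it strictly, and the auxiliary surgeries (the ``unhiding'' moves and the Type~1 rearrangements of Figure~\ref{tabletrois}) never decrease it. Hence the cascades triggered by successive invocations of Lemmas~\ref{type2neigh}--\ref{idealstep} all draw on a single global budget of $g$ simplifications per level; they do not reset from stage to stage.

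This is exactly how the paper extracts the constants: writing $m$ for apparent simplifications, $n$ for hidden ones (each costing two surgeries: the unhiding surgery plus the simplification it enables), and $k$ for the surgeries of Figure~\ref{tabletrois}, the normalization to a toral collection has length $m+2n+k$ with $m+n\leq g$ and $k\leq g-1$, hence length at most $3g-1$. Adding the at most $4$ further surgeries needed to move $(N_{g-1},y)\#\gm_{\mathbb{T}}$ to $N_g$ (Figure~\ref{suite}, together with Lemma~\ref{side}) gives $d_g\leq 3g+3+d_{g-1}$, and with $d_1=0$ this telescopes to $2d_g\leq 3g^2+9g-12$; no separate matching against small-genus data is needed. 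Without the monotonicity observation your argument yields at best ``some polynomial bound,'' and as written the two halves of your count are inconsistent with each other.
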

\begin{proof}
Let $d_{g}:=\max\{d(\gm,N_g)\}$ be the maximal distance to the necklace. By Proposition ~\ref{prop}, if $\gm$ is a one-faced collection, there is sequence $s_n$ of surgeries from $\gm$ to $\gm_n$ such that $\gm_n$ is toral. In this sequence, we have three kind of steps:
\begin{itemize}
\item making an apparent simplification on a vertex of Type ~3; let $m$ be their numbers,
\item  making a hidden simplification, that is a simplification which follows a suitable surgery as in Lemma \ref{type2neigh}; let $n$ be their numbers,
\item making a surgery which are not followed by simplification as in Figure ~\ref{tabletrois}; let $k$ be their numbers.    
\end{itemize}
It follows that the length $l(s_n)$ is equal to $m+2n+k$; with $m+n\leq g$ and $k\leq g-1$ (since the last step correspond to a surgery around vertices of Type ~1). 

The maximum is reached when every simplification follows a suitable surgery; that is $m=0$ and $n=g$. So we have $l(s_n)\leq 3g-1$. 

Since $\gm_n=(\gm',x)\#\gm_{\mathbb{T}}$, it follows that $\gm$ is at most  at distance $3g-1+d_{g-1}$ of $(N_{g-1},y)\#\gm_{\mathbb{T}}$.  So, $$d(\gm,N_g)\leq 3g+3+d_{g-1},$$ for $(N_{g-1},y)\#\gm_{\mathbb{T}}$ is at most  at distance $4$ of $N_g$ (see Figure \ref{suite}). Hence $$d_g\leq3g+3+d_{g-1};$$ and by induction on $g$ \[D_g\leq 2d_g\leq 3g^2+9g-12.\]
\end{proof}
\begin{paragraph}{Non hyperbolicity of $K_{\infty}$} Let $(X,d)$ a totally geodesic metric space, that is every pair of points in $X$ are joint by a geodesic. 

\begin{definition} A geodesic triangle is a triple $(\eta_1,\eta_2,\eta_3)$ of geodesics\\ $\eta_i:[0,1]\longrightarrow X$ such that:
\[\eta_1(1)=\eta_2(0);\quad \eta_2(1)=\eta_3(0);\quad \eta_3(1)=\eta_1(0).\]

Let $\delta\in\R_{+}$ and $T:=(\eta_1,\eta_2,\eta_3)$ geodesic triangle of $X$. We say that $T$ is $\delta$-\emph{thin} if the $\delta$-neighborhood of the union of two geodesics of $T$ contain the third.

A metric space $(X,d)$ is \emph{Gromov hyperbolic} if there exist $\delta\geq0$ such that every geodesic triangle $T$ is $\delta$-thin.    
\end{definition}

\noindent For more details on Gromov hyperbolic spaces, see \cite{Ghys}.

We show that $K_{\infty}$ is not Gromov hyperbolic by giving triangles on $K_{\infty}$ which are not $\delta$-thin for a given $\delta$. We denote by $d$ the distance on $K_{\infty}$. We recall that for a unicellular collection $\gm$, $S_{\gm}$ denotes the number of $1$-simple curves of $\gm$.

\begin{lemma} Let $\gm_1$ and $\gm_2$ two unicellular collections. Then, \[d(\gm_1,\gm_2)\geq\frac{1}{2}|S_{\gm_1}-S_{\gm_2}|.\]
\end{lemma}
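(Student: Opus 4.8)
The plan is to bound how much a single surgery can change the number of $1$-simple curves, and then use the triangle inequality. The claim says $d(\gm_1,\gm_2)\geq\frac{1}{2}|S_{\gm_1}-S_{\gm_2}|$, so equivalently I want to show that each edge of $K_{\infty}$ changes $S$ by at most $2$. Recall that an edge of $K_{\infty}$ is either a surgery $\sigma_{x,y}$ (within some $K_g$) or a connected-sum edge with $\gm_{\mathbb{T}}$.

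First I would analyze a surgery $\sigma_{x,y}$. The surgery replaces the two edges carrying $x,\bar{x}$ and $y,\bar{y}$ by two new edges $X,\bar X$ and $Y,\bar Y$, permuting the blocks $w_1$ and $w_3$ in the gluing pattern (Lemma~\ref{lemsurg}). The key observation is that whether a given curve $\theta\in\gm$ is $1$-simple is governed by the permutation data $(\alpha,\mu,\gamma)$ only in a neighborhood of the edges $x$ and $y$: a $1$-simple curve is one satisfying $C(x)=x$ where $C=\gamma\alpha\gamma$. Since a surgery modifies $\gamma$ (the shift) only by relocating the blocks $w_1,w_3$, it affects the local behavior of $C$ only at the four oriented edges immediately adjacent to $x,\bar x,y,\bar y$. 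Thus only the curves passing through those edges can change their $1$-simple status, and I would argue there are at most two such curves, so at most two $1$-simple curves can be created or destroyed by one surgery. Concretely, in the simplification lemma we already saw a surgery that creates \emph{exactly one} new $1$-simple curve; I would check that no surgery can alter the count by more than $2$ by a direct inspection of the gluing-pattern transformation.

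Next I would handle the connected-sum edges: passing from $\gm$ to $(\gm,x)\#\gm_{\mathbb{T}}$ adds one genus and, by Lemma~\ref{sum}, inserts the torus block $y_1 w_1' \bar y_1 y_2 w_2' \bar y_2$; this splits the edge $x$ into $x_1,x_2$ and adds the two new self-intersecting curves of $\gm_{\mathbb{T}}$. One checks that this changes $S_{\gm}$ by a bounded amount (again at most $2$). Therefore every edge of $K_{\infty}$ changes $S$ by at most $2$, i.e.\ $|S_{\gm}-S_{\gm'}|\leq 2$ whenever $\gm,\gm'$ are joined by an edge.

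Finally I would assemble the bound: if $\gm_1=\delta_0\to\delta_1\to\cdots\to\delta_k=\gm_2$ is a geodesic of length $k=d(\gm_1,\gm_2)$, then by the telescoping triangle inequality for the integer-valued quantity $S$,
\[
|S_{\gm_1}-S_{\gm_2}|\;\leq\;\sum_{i=1}^{k}|S_{\delta_{i-1}}-S_{\delta_i}|\;\leq\;2k\;=\;2\,d(\gm_1,\gm_2),
\]
which rearranges to the desired inequality $d(\gm_1,\gm_2)\geq\frac{1}{2}|S_{\gm_1}-S_{\gm_2}|$. The main obstacle is the per-edge estimate: I must verify carefully, from the gluing-pattern formula in Lemma~\ref{lemsurg}, that a surgery cannot simultaneously create or destroy three or more $1$-simple curves. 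The cleanest way is to note that being $1$-simple is a local condition at a single edge (the condition $C(x)=x$), that a surgery alters the cyclic word only by transposing two blocks, and that at most the endpoints of the two modified edges (hence at most two curves) see a change in their $C$-orbit structure; everything away from $x$ and $y$ is untouched, so its $1$-simple status is preserved.
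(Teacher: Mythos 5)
Your proof is correct and follows essentially the same route as the paper: bound the change of $S$ along each edge of $K_{\infty}$ (at most $2$ for a surgery, at most $1$ for a connected sum with $\gm_{\mathbb{T}}$, which the paper asserts and you justify via the locality of the condition $C(x)=x$) and then telescope along a geodesic. No substantive difference.
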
 
\begin{proof} If $\gm'=(\gm,x)\#\gm_{\mathbb{T}}$, then $|S_{\gm}-S_{\gm'}|$ is equal to $0$ or $1$ depending on whether $x$ is a side of a $1$-simple curve or not.\\  
On the other side, if $\gm'=\sigma_{x,y}(\gm)$ then $|S_{\gm}-S_{\gm'}|\leq 2$, that is a surgery creates at most two $1$-simple curves or eliminates at most two 1-simple curves. 

Since a path in $K_{\infty}$ is a sequence of surgery and connected sum, then we need at least $\frac{1}{2}|S_{\gm}-S_{\gm'}|$ steps from $\gm_1$ to $\gm_2$. 
\end{proof} 

Let $A:=\gm_{\mathbb{T}}$ and $X_{2g}$ be the unicellular collection obtained by gluing $g$-copies of $\gm_{\mathbb{T}}$ on the necklace $N_g$, each copy being glue on a 1-simple curve of $C_g$ (see Figure ~\ref{oiseau}). The collection $X_{2g}$ has $2g$ $1$-simple curves.

Let $Y_{2g}$ be the unicellular collection on $\sg_{2g}$ obtained by gluing $g-1$ copies of $\gm_{\mathbb{T}}$ to the necklace $N_{g+1}$ as in figure \ref{oiseau}.

\begin{figure}[htbp]
\begin{center}
\includegraphics[scale=0.25]{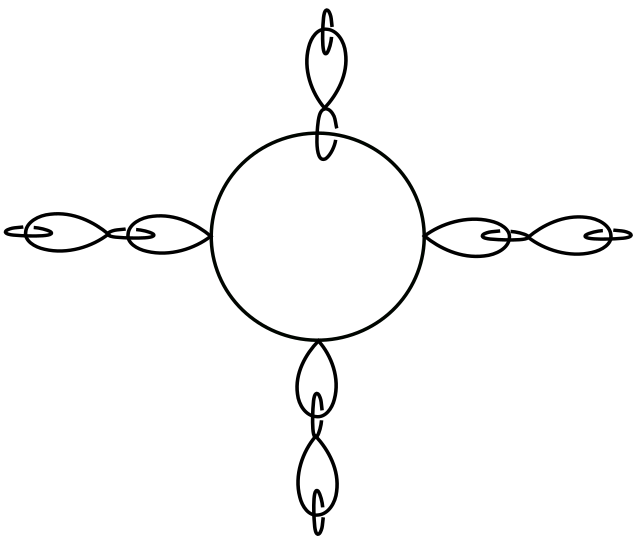}\hspace{2cm}
\includegraphics[scale=0.25]{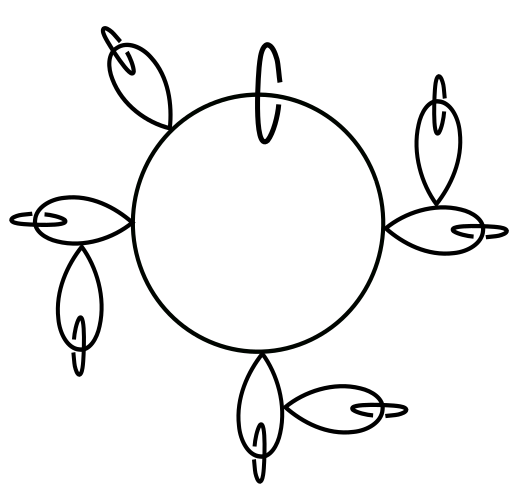}
\caption{The unicellular collections $X_{8}$ (on the left) and $Y_8$ (on the right).}
\label{oiseau}
\end{center}
\end{figure}

\begin{lemma}\label{depress}
For every $g\geq1$, $d(X_{2g}, \gm_{\mathbb{T}})=d(Y_{2g}, \gm_{\mathbb{T}})=2g$. Moreover,
\[\frac{g}{2} \leq d(X_{2g},Y_{2g})\leq2g.\]
\end{lemma}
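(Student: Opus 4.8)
The plan is to bound each of the three quantities from both sides: explicit paths in $K_{\infty}$ for the upper bounds, and two numerical invariants for the lower bounds. The first invariant is the \emph{genus}: it is constant along every surgery edge and changes by exactly $1$ along every connected-sum edge, so along any path the number of connected-sum edges is at least the difference of genera. The second invariant is the number $S_{\gm}$ of $1$-simple curves, for which the previous lemma gives the per-move estimates (a connected sum alters $S$ by at most $1$, a surgery by at most $2$) together with the distance bound $d(\gm_1,\gm_2)\ge\tfrac12|S_{\gm_1}-S_{\gm_2}|$. The whole argument consists in playing these two invariants against each other.

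For the two distances to $\gm_{\mathbb{T}}$ I would argue as follows. Since $\gm_{\mathbb{T}}$ has genus $1$ while $X_{2g}$ and $Y_{2g}$ have genus $2g$, any path must use at least $2g-1$ connected-sum edges, giving the preliminary bound $d(\,\cdot\,,\gm_{\mathbb{T}})\ge 2g-1$. To upgrade $2g-1$ to $2g$ I would couple this with the $1$-simple count: writing a geodesic as $c$ connected-sum edges and $r$ surgery edges, one has $c\ge 2g-1$ and $|S_{X_{2g}}-S_{\gm_{\mathbb{T}}}|\le c+2r$. Because $X_{2g}$ attains the extremal value $S_{X_{2g}}=2g$ while $S_{\gm_{\mathbb{T}}}=0$, the case $c=2g-1$ forces $2g\le(2g-1)+2r$, hence $r\ge 1$ and the length $c+r\ge 2g$; the case $c\ge 2g$ is immediate. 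This is exactly the point where one sees that the \emph{maximal} number of $1$-simple curves cannot be manufactured by connected sums alone, and the same scheme applies verbatim to $Y_{2g}$ once its $1$-simple count is read off from its picture. The separation $d(X_{2g},Y_{2g})\ge \tfrac{g}{2}$ I would obtain directly from the $S$-lemma, using that $X_{2g}$ and $Y_{2g}$ differ by $g$ in their number of $1$-simple curves, so that $d(X_{2g},Y_{2g})\ge\tfrac12|S_{X_{2g}}-S_{Y_{2g}}|=\tfrac{g}{2}$.

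For the matching upper bounds I would exhibit paths realizing the constructions themselves. The defining recipe for $X_{2g}$ (respectively $Y_{2g}$) from $\gm_{\mathbb{T}}$ is a sequence of connected sums building up the necklace and then attaching the torus beads; augmented by the single surgery forced in the lower bound, this is a path of length $\le 2g$, whence $d(\,\cdot\,,\gm_{\mathbb{T}})\le 2g$ and, with the lower bound, equality. For $d(X_{2g},Y_{2g})\le 2g$ I would not route through $\gm_{\mathbb{T}}$ (that only yields $4g$); instead, since both collections live on $\sg_{2g}$ and share the same ``necklace plus beads'' skeleton, I would move the $\gm_{\mathbb{T}}$-summands from the $X_{2g}$-configuration to the $Y_{2g}$-configuration by a bounded number of surgeries, keeping the genus fixed and using at most $2g$ steps. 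The main obstacle, and the place where care is genuinely required, is the sharp additive constant: the genus obstruction alone only delivers $2g-1$, so the upgrade to $2g$ hinges on pinning down the precise values $S_{X_{2g}}$ and $S_{Y_{2g}}$ from the explicit figures and on the extremality observation that a collection saturating $S=\mathrm{genus}$ forces at least one surgery into every geodesic reaching it from $\gm_{\mathbb{T}}$.
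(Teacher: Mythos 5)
Your overall strategy (a genus/level count to bound the number of connected-sum edges, plus the $1$-simple count $S$ to handle the surgery edges) is the same as the paper's, but the one step where you go beyond the bare level count is broken. You take $S_{\gm_{\mathbb{T}}}=0$, and this is exactly what forces $r\geq 1$ in your upgrade from $2g-1$ to $2g$. But $\gm_{\mathbb{T}}$ consists of two simple closed curves meeting in a single point, so each of them intersects the other exactly once and is therefore $1$-simple: $S_{\gm_{\mathbb{T}}}=2$. With the correct value, the inequality $|S_{X_{2g}}-S_{\gm_{\mathbb{T}}}|=2g-2\leq c+2r$ is already satisfied by $c=2g-1$, $r=0$, and no extra surgery is forced. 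In fact no such upgrade can succeed: $X_{2g}$ is by construction an iterated connected sum of $2g$ copies of $\gm_{\mathbb{T}}$ (the necklace $N_g$ is itself a sum of $g$ copies, and $g$ more are attached), so there is a path of connected-sum edges of length $2g-1$ from $\gm_{\mathbb{T}}$ to $X_{2g}$. The paper does not attempt your refinement; it reads the distance directly off the level structure of $K_{\infty}$ (``obtained by gluing $2g$ copies of $\gm_{\mathbb{T}}$'').

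Two further points. For the upper bound $d(X_{2g},Y_{2g})\leq 2g$ you propose to stay in genus $2g$ and move the $\gm_{\mathbb{T}}$-summands by surgeries, but you give no construction and no reason the count is at most $2g$; the paper's path instead leaves the level: it unglues the $g$ tori sitting on the $1$-simple curves of $X_{2g}$ ($g$ connected-sum edges traversed downwards) and reglues them in the $Y_{2g}$ configuration ($g$ more edges), for a total of $2g$. For the lower bound $d(X_{2g},Y_{2g})\geq\frac{g}{2}$ you correctly quote the estimate $d\geq\frac{1}{2}|S_{X_{2g}}-S_{Y_{2g}}|$ from the preceding lemma, but the input $|S_{X_{2g}}-S_{Y_{2g}}|=g$ is asserted rather than computed from the explicit collections (the paper claims this difference is $2g$), so the step that actually produces the divergence needed for non-hyperbolicity is left unverified.
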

\begin{proof} The collections $X_{2g}$ and $Y_{2g}$ are in the $2g$-th level of $K_{\infty}$, and are obtained by gluing $2g$ copies de $\gm_{\mathbb{T}}$. Therefore, $d(X_{2g}, \gm_{\mathbb{T}})=d(Y_{2g}, \gm_{\mathbb{T}})=2g$.

On $X_2g$, we cut the copies of $\gm_{\mathbb{T}}$ gluing on 1-simple curves and and glue them again in an apropriate manner to obtain $Y_{2g}$. Doing so, we obtained a path on $K_{\infty}$, from $X_{2g}$ to $Y_{2g}$ of length $2g$. Therefore, $$d(X_{2g},Y_{2g})\leq2g.$$  

On the other side, we have $|S_{X_{2g}}-S_{Y_{2g}}|=2g$, so $d(X_{2g},Y_{2g})\geq2g$.
\end{proof}

Let $T_{2g}$ be a triangle with extremities $A$, $X_{2g}$ and $Y_{2g}$. The points $X_{2g}$ and $Y_{2g}$ are in the same level $K_{4,2g}$, but we do not know whether a geodesic from $X_{2g}$ to $Y_{2g}$ stays in $K_{4,2g}$ or not. The level $K_{4,2g}$ is maybe not geodesic. Nonetheless, Lemma ~\ref{depress} tell us that the geodesic $(X_{2g}Y_{2g})$ do not go down the level $K_{4,g}$, that is $$d(\gm_{\mathbb{T}},(X_{2g}Y_{2g})).$$

In fact, since $X_{2g}$ and $Y_{2g}$ are in the same level, if the geodesic $(X_{2g}Y_{2g})$ go down in level $k$ times, it must go up in level $k$ times and it implies that $$2k\leq d(X_{2g},Y_{2g})\leq 2g \implies k\leq g.$$

This fact on $T_{2g}$ is crucial and it allows us to show that the sequence of triangles $(T_{2g})_{g\in\n}$ is not $\delta$-thin for any $\delta\geq 0$.

\begin{proof}[Proof of Theorem \ref{hyper}] Let $\mathcal{D}_{2k}:=(X_2k,\rightarrow)$ (respectively $\mathcal{D'}_{2k}:=(Y_2k,\rightarrow)$) the half-geodesic passing through all the points $X_{2m}$ (respectively $Y_{2m}$) for $m\geq k$.

Since $d(X_{2g},Y_{2g})\geq \frac{g}{2}$, then $d(\mathcal{D}_k,\mathcal{D'}_k) \rightarrow +\infty$. So, the $\delta$-neighborhoods $V_{\delta}(\mathcal{D}_k)$ and $V_{\delta}(\mathcal{D}'_k)$ are disjoint for $k$ sufficiently large and 
$$d(V_{\delta}(\mathcal{D}_k),V_{\delta}(\mathcal{D}'_k))\rightarrow +\infty.$$ 

It follows that for $k_0$ big enough, \[d(\gm_{\mathbb{T}},(X_{4k_0}Y_{4k_0}))>2k_0,\quad d(V_{\delta}(\mathcal{D}_k),V_{\delta}(\mathcal{D}'_k))\geq 1.\]

The geodesic $(X_{4k_0},Y_{4k_0})$ is not contained in $V_{\delta}(\mathcal{D}_k)\cup V_{\delta}(\mathcal{D}'_k)$. \\So, $(X_{4k_0}Y_{4k_0})$ is not contained in $V_{\delta}(\gm_{\mathbb{T}}X_{4k_0})\cup V_{\delta}(\gm_{\mathbb{T}}Y_{4k_0})$.

Hence, $K_{\infty}$ is not Gromov hyperbolic.  
\end{proof} 
\end{paragraph}  
\end{section}
\vspace{1,8cm}
\begin{question} The characterization of a surgery on a one-faced (Lemma ~\ref{lemsurg}) collection still holds for the general case, namely for unicellular maps. Therefore, one can wonder whether the surgery graph of unicellular maps is connected. 
\end{question}

A surgery on a unicellular map $\gm$ leaves the degree partition of $\gm$ invariant. Therefore, the surgery graph for unicellular maps is for unicellular maps with the same degree partition. 

Among one-faced collections, there is a big class of those made by only simple curves. We know that their number grows exponentially with the genus \cite{Aoug}.
\begin{question} Is the surgery graph of those one-faced collection made by simple curves connected? Is the surgery graph of minimally intersecting pairs connected?
\end{question}
In the first case, surgeries are allowed only between intertwined  oriented edges belonging to the same curve or to two disjoint curves. For minimally intersecting filling pairs, surgery are allowed only between intertwined  oriented edges on different sides of the same curve.

Using the Goupil-Schaeffer formula for the number of one-faced collections, one can show that $D_g$ is (asymptotically) at least linear on $g$.   
\begin{question} Is the diameter $D_g$ linear on $g$? Is the family $(K_g)$ an expander? 
\end{question}

Unité de Mathématiques Pures et Appliquées (UMPA), ENS-Lyon.\\
\textit{E-mail address}: abdoul-karim.sane@ens-lyon.fr

\end{document}